\newcommand{\acli}[1]{\emph{\acl{#1}}}		
\newcommand{\acdef}[1]{\emph{\acl{#1}} \textup{(\acs{#1})}\acused{#1}}		
\newcommand{\acdefp}[1]{\emph{\aclp{#1}} \textup{(\acsp{#1})}\acused{#1}}	
\colorlet{MidnightBlue}{black!50!blue}
\colorlet{RoyalBlue}{black!20!blue}
\colorlet{OliveGreen}{black!60!green}
\colorlet{MyRed}{FireBrick!50!Crimson}
\colorlet{MyBlue}{DodgerBlue!60!black}
\colorlet{MyGreen}{DarkGreen!85!black}
\colorlet{MyLightBlue}{DodgerBlue!15}
\colorlet{MyLightGreen}{MyGreen!20}
\colorlet{LinkColor}{MediumBlue}
\colorlet{PrimalColor}{MyBlue}
\colorlet{PrimalFill}{MyLightBlue}
\colorlet{DualColor}{MyRed}
\newcommand{\afterhead}{.\;}		
\newcommand{\para}[1]{\medskip\paragraph{\textbf{#1\afterhead}}}
\tikzset{negated/.style={
        decoration={markings,
            mark= at position 0.5 with {
                \node[transform shape,yshift=1pt] (tempnode) {\scriptsize$\xcancel{\quad}$};
            }
        },
        postaction={decorate}
    }
}
\newcommand{\citefull}[2][]{\citeauthor{#2}, \citeyear[#1]{#2}}
\newcommand{\EMAIL}[1]{\email{\href{mailto:#1}{#1}}}
\crefname{algo}{Algorithm}{Algorithms}
\crefname{assumption}{Assumption}{Assumptions}
\crefname{hypothesis}{Hypothesis}{Hypotheses}
\crefname{method}{Method}{Methods}
\theoremstyle{plain}
\newtheorem{theorem}{Theorem}		
\newtheorem{corollary}{Corollary}		
\newtheorem{lemma}{Lemma}		
\newtheorem{proposition}{Proposition}		
\newtheorem*{corollary*}{Corollary}		
\theoremstyle{definition}
\newtheorem{definition}{Definition}		
\newtheorem{assumption}{Assumption}		
\newtheorem{example}{Example}		
\newtheorem{algo}{Algorithm}		
\newtheorem*{definition*}{Definition}		
\newtheorem*{assumption*}{Assumption}		
\newtheorem*{blanket*}{Blanket assumption}		
\newtheorem*{example*}{Example}		
\theoremstyle{remark}
\newtheorem{remark}{Remark}		
\newtheorem*{remark*}{Remark}		
\def\endenv{\hfill{\small$\lozenge$}\medskip}
\renewcommand{\qedsymbol}{$\blacksquare$}		
\newcounter{proofpart}
\numberwithin{example}{section}		
\newcommand{\debug}[1]{#1}		
\newcommand{\newmacro}[2]{\newcommand{#1}{\debug{#2}}}		
\newcommand{\newop}[2]{\DeclareMathOperator{#1}{\debug{#2}}}		
\DeclarePairedDelimiter{\braces}{\{}{\}}		
\DeclarePairedDelimiter{\bracks}{[}{]}		
\DeclarePairedDelimiter{\parens}{(}{)}		
\DeclarePairedDelimiter{\abs}{\lvert}{\rvert}		
\DeclarePairedDelimiterX{\setdef}[2]{\{}{\}}{#1:#2}		
\DeclarePairedDelimiterXPP{\exclude}[1]{\mathopen{}\setminus}{\{}{\}}{}{#1}
\newcommand{\N}{\mathbb{N}}		
\newcommand{\R}{\mathbb{R}}		
\DeclareMathOperator*{\intersect}{\bigcap}		
\DeclareMathOperator*{\union}{\bigcup}		
\DeclareMathOperator{\bigoh}{\mathcal{O}}		
\DeclareMathOperator{\cl}{cl}		
\DeclareMathOperator{\dist}{dist}		
\DeclareMathOperator{\ess}{ess}		
\DeclareMathOperator{\grad}{\nabla}		
\DeclareMathOperator{\Hess}{Hess}		
\DeclareMathOperator{\one}{\mathds{1}}		
\DeclareMathOperator{\relint}{ri}		
\newcommand{\cf}{cf.\xspace}		
\newcommand{\eg}{e.g.,\xspace}		
\newcommand{\ie}{i.e.,\xspace}		
\newcommand{\vs}{vs.\xspace}		
\newcommand{\textpar}[1]{\textup(#1\textup)}		
\newcommand{\alt}[1]{#1'}		
\newcommand{\altalt}[1]{#1''}		
\newmacro{\dd}{\:d}		
\newcommand{\ddt}{\frac{d}{dt}}		
\newcommand{\del}{\partial}		
\newcommand{\eps}{\varepsilon}		
\newcommand{\insum}{\sum\nolimits}		
\newmacro{\const}{c}		
\newmacro{\Const}{C}		
\newmacro{\coef}{\lambda}		
\newmacro{\param}{\theta}		
\newmacro{\params}{\Theta}		
\newmacro{\pexp}{p}		
\newmacro{\qexp}{q}		
\newmacro{\rexp}{r}		
\newmacro{\beforestart}{0}		
\newmacro{\start}{1}		
\newmacro{\afterstart}{2}		
\newmacro{\running}{\start,\afterstart,\dotsc}		
\newmacro{\halfrunning}{1/2,1,3/2,\dotsc}		
\newmacro{\run}{n}		
\newmacro{\runalt}{k}		
\newmacro{\nRuns}{T}		
\newmacro{\runs}{\mathcal{\nRuns}}		
\newmacro{\state}{X}		
\newmacro{\statealt}{Y}		
\newmacro{\statealtalt}{Z}		
\newcommand{\init}[1][\state]{\debug{#1}_{\start}}		
\newcommand{\iter}[1][\state]{\debug{#1}_{\runalt}}		
\newcommand{\prev}[1][\state]{\debug{#1}_{\run-1}}		
\newcommand{\curr}[1][\state]{\debug{#1}_{\run}}		
\renewcommand{\next}[1][\state]{\debug{#1}_{\run+1}}		
\newcommand{\beforelead}[1][\state]{\debug{#1}_{\run-1/2}}		
\newcommand{\lead}[1][\state]{\debug{#1}_{\run+1/2}}		
\newmacro{\tstart}{0}		
\newmacro{\timealt}{s}		
\newmacro{\horizon}{T}		
\newmacro{\imflow}{\Upsilon}		
\newmacro{\basin}{\mathcal{B}}		
\newmacro{\dbasin}{\mathcal{W}}		
\newmacro{\limset}{\mathcal{L}}		
\DeclarePairedDelimiterXPP{\flowof}[2]{\flow_{#1}}{(}{)}{}{#2}		
\DeclarePairedDelimiterXPP{\imflowof}[2]{\imflow_{#1}}{(}{)}{}{#2}		
\newop{\Nash}{NE}		
\newop{\CE}{CE}		
\newop{\CCE}{CCE}		
\newop{\NI}{NI}		
\newop{\brep}{br}		
\newop{\reg}{Reg}		
\newop{\preg}{\overline{Reg}}		
\newop{\val}{val}		
\newmacro{\play}{i}		
\newmacro{\playalt}{j}		
\newmacro{\playaltalt}{k}		
\newmacro{\nPlayers}{N}		
\newmacro{\players}{\mathcal{\nPlayers}}		
\newmacro{\pure}{\alpha}		
\newmacro{\purealt}{\beta}		
\newmacro{\purealtalt}{\gamma}		
\newmacro{\nPures}{A}		
\newmacro{\pures}{\mathcal{\nPures}}		
\newmacro{\strat}{x}		
\newmacro{\stratalt}{\alt\strat}		
\newmacro{\strataltalt}{\altalt\strat}		
\newmacro{\strats}{\mathcal{X}}		
\newmacro{\intstrats}{\strats^{\circle}}		
\newcommand{\eq}{\sol}		
\newmacro{\loss}{\ell}		
\newmacro{\pay}{u}		
\newmacro{\payv}{v}		
\newmacro{\pot}{\Phi}		
\newmacro{\game}{\mathcal{G}}		
\newmacro{\gamefull}{\game(\players,\points,\pay)}		
\newmacro{\fingame}{\Gamma}		
\newmacro{\fingamefull}{\Gamma(\players,\pures,\pay)}		
\newmacro{\mixgame}{\simplex(\fingame)}
\newmacro{\gmat}{g}		
\newmacro{\gdist}{\dist_{\gmat}}
\newmacro{\mfld}{\mathcal{M}}		
\newmacro{\form}{\omega}		
\newmacro{\tvec}{z}		
\newmacro{\uvec}{u}		
\newmacro{\ball}{\mathbb{B}}		
\newmacro{\sphere}{\mathbb{S}}		
\newmacro{\vertex}{v}		
\newmacro{\vertexalt}{\alt\vertex}		
\newmacro{\vertexaltalt}{\altalt\vertex}		
\newmacro{\nVertices}{V}		
\newmacro{\vertices}{\mathcal{\nVertices}}		
\newmacro{\edge}{e}		
\newmacro{\edgealt}{\alt\edge}		
\newmacro{\edgealtalt}{\altalt\edge}		
\newmacro{\nEdges}{E}		
\newmacro{\edges}{\mathcal{\nEdges}}		
\newmacro{\graph}{\mathcal{G}}		
\newmacro{\graphall}{\graph(\vertices,\edges)}		
\newmacro{\vecspace}{\mathcal{Z}}		
\newmacro{\subspace}{\mathcal{W}}		
\newmacro{\bvec}{e}		
\newmacro{\bvecs}{\mathcal{E}}		
\newmacro{\coord}{k}		
\newmacro{\coordalt}{j}		
\newmacro{\coordaltalt}{l}		
\newmacro{\nCoords}{d}		
\newmacro{\dims}{\nCoords}		
\newmacro{\vdim}{\nCoords}		
\newmacro{\pvec}{z}		
\newmacro{\dvec}{w}		
\newmacro{\pspace}{\mathcal{V}}		
\newmacro{\dspace}{\mathcal{Y}}		
\newmacro{\ppoint}{x}		
\newmacro{\ppointalt}{\alt\ppoint}		
\newmacro{\ppointaltalt}{\altalt\ppoint}		
\newmacro{\ppoints}{\mathcal{X}}		
\newmacro{\pstate}{X}		
\newmacro{\dpoint}{y}		
\newmacro{\dpointalt}{\alt\dpoint}		
\newmacro{\dpointaltalt}{\altalt\dpoint}		
\newmacro{\dpoints}{\mathcal{Y}}		
\newmacro{\dstate}{\state}		
\newmacro{\dset}{\mathcal{D}}		
\newmacro{\dnhd}{\mathcal{D}}		
\newmacro{\dlyap}{F}		
\newmacro{\mat}{M}		
\newmacro{\hmat}{H}		
\newmacro{\ones}{\mathbf{1}}		
\newmacro{\eye}{I}		
\newmacro{\zer}{\mathbf{0}}		
\DeclarePairedDelimiter{\norm}{\lVert}{\rVert}		
\DeclarePairedDelimiterXPP{\dnorm}[1]{}{\lVert}{\rVert}{_{\ast}}{#1}		
\DeclarePairedDelimiterXPP{\onenorm}[1]{}{\lVert}{\rVert}{_{1}}{#1}		
\DeclarePairedDelimiterXPP{\twonorm}[1]{}{\lVert}{\rVert}{}{#1}		
\DeclarePairedDelimiterXPP{\supnorm}[1]{}{\lVert}{\rVert}{_{\infty}}{#1}		
\DeclarePairedDelimiterX{\braket}[2]{\langle}{\rangle}{#1,#2}		
\DeclarePairedDelimiterX{\inner}[2]{\langle}{\rangle}{#1,#2}		
\newcommand{\defeq}{\coloneqq}		
\newcommand{\eqdef}{\eqqcolon}		
\newcommand{\from}{\colon}		
\newop{\Opt}{Opt}		
\newop{\Sol}{Sol}		
\newop{\gap}{Gap}		
\newmacro{\orcl}{V}		
\newop{\err}{Err}		
\newop{\IWE}{IWE}		
\newmacro{\obj}{f}		
\newmacro{\objalt}{g}		
\newmacro{\sobj}{F}		
\newmacro{\gvec}{g}		
\newmacro{\gbound}{G}		
\newcommand{\sol}[1][\point]{#1^{\ast}}		
\newmacro{\vecfield}{v}		
\newmacro{\oper}{A}		
\newmacro{\vbound}{\gbound}		
\newmacro{\lips}{L}		
\newmacro{\strong}{\alpha}		
\newmacro{\smooth}{\beta}		
\newop{\tcone}{TC}		
\newop{\dcone}{\tcone^{\ast}}		
\newop{\ncone}{NC}		
\newop{\pcone}{PC}		
\newop{\hull}{\Delta}		
\newmacro{\cvx}{\mathcal{C}}		
\newmacro{\subd}{\partial}		
\newmacro{\minmax}{\ell}		
\newmacro{\minvar}{y}		
\newmacro{\minvaralt}{\alt\minvar}		
\newmacro{\minvars}{\mathcal{Y}}		
\newmacro{\minstate}{Y}		
\newmacro{\maxvar}{z}		
\newmacro{\maxvaralt}{\alt\maxvar}		
\newmacro{\maxvars}{\mathcal{Z}}		
\newmacro{\maxstate}{Z}		
\newop{\Eucl}{\Pi}		
\newop{\logit}{\Lambda}		
\newop{\dkl}{KL}		
\newmacro{\hreg}{h}		
\newmacro{\hconj}{\hreg^{\ast}}		
\newmacro{\breg}{D}		
\newmacro{\mprox}{P}		
\newmacro{\mirror}{Q}		
\newmacro{\fench}{F}		
\newmacro{\hstr}{K}		
\newmacro{\proxdom}{\points_{\hreg}}		
\newmacro{\proxdomi}{\points_{\hreg_{\play}}}		
\DeclarePairedDelimiterXPP{\proxof}[2]{\mprox_{#1}}{(}{)}{}{#2}		
\newmacro{\zone}{\mathbb{D}}		
\newmacro{\point}{x}		
\newmacro{\pointalt}{\alt\point}		
\newmacro{\pointaltalt}{\altalt\point}		
\newmacro{\points}{\mfld}		
\newmacro{\intpoints}{\relint\points}		
\newmacro{\base}{p}		
\newmacro{\basealt}{q}		
\newmacro{\basealtalt}{u}		
\newmacro{\open}{\mathcal{U}}		
\newmacro{\closed}{\mathcal{C}}		
\newmacro{\cpt}{\mathcal{K}}		
\newmacro{\set}{\mathcal{S}}		
\newmacro{\nhd}{\mathcal{U}}		
\newop{\ex}{\mathbb{E}}		
\newop{\prob}{\mathbb{P}}		
\newop{\Var}{Var}		
\newop{\simplex}{\hull}		
\providecommand\given{}		
\DeclarePairedDelimiterXPP{\exof}[1]{\ex}{[}{]}{}{
\renewcommand\given{\nonscript\,\delimsize\vert\nonscript\,\mathopen{}} #1}
\DeclarePairedDelimiterXPP{\probof}[1]{\prob}{(}{)}{}{
\renewcommand\given{\nonscript\:\delimsize\vert\nonscript\:\mathopen{}} #1}
\DeclarePairedDelimiterXPP{\oneof}[1]{\one}{(}{)}{}{
\renewcommand\given{\nonscript\,\delimsize\vert\nonscript\,\mathopen{}} #1}
\newmacro{\seed}{\sample}		
\newmacro{\seeds}{\samples}		
\newmacro{\pdist}{P}		
\newmacro{\history}{\mathcal{H}}		
\newmacro{\sample}{\omega}		
\newmacro{\samples}{\Omega}		
\newmacro{\filter}{\mathcal{F}}		
\newmacro{\probspace}{(\samples,\filter,\prob)}		
\newcommand{\as}{\debug{\textpar{a.s.}}\xspace}		
\newmacro{\event}{\mathcal{E}}       
\newmacro{\eventalt}{\mathcal{H}}       
\newmacro{\mean}{\mu}		
\newmacro{\sdev}{\sigma}		
\newmacro{\variance}{\sdev^{2}}		
\newcommand{\est}[1]{\hat #1}		
\newmacro{\signal}{V}		
\newmacro{\step}{\gamma}		
\newmacro{\learn}{\eta}		
\newmacro{\efftime}{\tau}		
\newcommand{\apt}[2][]{\dstate_{#1}(#2)}		
\newmacro{\noise}{U}		
\newmacro{\snoise}{\xi}		
\newmacro{\noisepar}{\sdev}		
\newmacro{\noisevar}{\variance}		
\newmacro{\aggnoise}{\mathrm{\uppercase\expandafter{\romannumeral1}}}		
\newmacro{\supnoise}{\aggnoise_{\infty}}		
\newmacro{\maxnoise}{\aggnoise^{\ast}}		
\newmacro{\bias}{b}		
\newmacro{\bbound}{B}		
\newmacro{\sbias}{\chi}		
\newmacro{\aggbias}{\mathrm{\uppercase\expandafter{\romannumeral2}}}		
\newmacro{\supbias}{\aggbias_{\infty}}		
\newmacro{\maxbias}{\aggbias^{\ast}}		
\newmacro{\second}{\psi}		
\newmacro{\sbound}{M}		
\newmacro{\aggsecond}{\mathrm{\uppercase\expandafter{\romannumeral3}}}		
\newmacro{\supsecond}{\aggsecond_{\infty}}		
\newmacro{\maxsecond}{\aggsecond^{\ast}}		
\newmacro{\mix}{\delta}		
\newmacro{\unitvec}{w}		
\newmacro{\unitvar}{W}		
\newmacro{\perturb}{z}		
\newmacro{\purequery}{\est\pure}		
\newmacro{\query}{\est\state}		
\newmacro{\pivot}{\point}		
\newmacro{\querypoint}{\est\point}		
\DeclarePairedDelimiterXPP{\expof}[2]{\exp_{#1}}{(}{)}{}{#2}
\newmacro{\pureq}{\eq[\pure]}
\newop{\probalt}{\mathbb{Q}}		
\newmacro{\good}{\event}
\newmacro{\bad}{\mathcal{N}}
\newmacro{\lyap}{\pot}
\newmacro{\gauge}{\varphi}
\newmacro{\energy}{E}
\newmacro{\elvl}{a}
\newmacro{\ebound}{H}
\newmacro{\esmooth}{\beta}
\newmacro{\emax}{\energy_{\ast}}
\newmacro{\hien}{\energy_{+}}
\newmacro{\loen}{\energy_{-}}
\newmacro{\ediff}{\Delta\elvl}
\newmacro{\lvl}{c}
\newmacro{\thres}{\eps}		
\newmacro{\conf}{\rho}		
\newmacro{\stoptime}{N}		
\newmacro{\texp}{\mu}		
\newmacro{\bexp}{b}		
\newmacro{\sexp}{s}		
\newmacro{\dev}{z}		
\newmacro{\iDev}{k}		
\newmacro{\devs}{\mathcal{Z}}		
\newmacro{\score}{\dpoint}		
\newmacro{\cone}{\mathcal{C}}		
\newmacro{\poly}{\mathcal{P}}		
\newmacro{\termi}{\mathrm{\uppercase\expandafter{\romannumeral1}}}		
\newmacro{\termii}{\mathrm{\uppercase\expandafter{\romannumeral2}}}		
\newmacro{\termiii}{\mathrm{\uppercase\expandafter{\romannumeral3}}}		
\newmacro{\conj}{\mathcal{C}}     
\newmacro{\nconj}{\mathcal{H}} 
\newmacro{\pcoord}{\shortparallel}     
\newcommand{\normal}[1]{\tilde{#1}}
\newmacro{\fdiffeo}{\normal{\Phi}}     
\newmacro{\fnbhd}{\normal{\nhd}}     
\newmacro{\fpoint}{\normal{\point}}     
\newmacro{\fcurve}{\normal{\curve}}     
\newmacro{\fvecfield}{\normal{\vecfield}}     
\newmacro{\pvecfield}{\normal{\vecfield}^{\pcoord}}     
\newcommand{\fflow}[1][h]{\normal{\flowcurve}(#1)}     
\newcommand{\fflowc}[1][h]{\normal{\flowcurve}^\coord(#1)}     
\newcommand{\fpflow}[1][h]{\normal{\Pflowmap}(#1)}     
\newcommand{\faptbar}[1][u]{\normal{\bar\point}(#1)}
\newcommand{\dfflow}[1][h]{\dot{\normal{\flowcurve}}(#1)}     
\newcommand{\dfflowc}[1][u]{\dot{\normal{\flowcurve}}^{\coord}(#1)}     
\newcommand{\dfpflow}[1][h]{\dot{\normal{\Pflowmap}}(#1)}     
\newcommand{\dfpflowc}[1][u]{\dot{\normal{\Pflowmap}}^\coord(#1)}     
\newmacro{\cptrans}{\normal{\Lambda} \left(u,\fpflow[u] \right)}	
\newmacro{\cptransc}{\normal{\Lambda}_{\coord}\left(u,\fpflow[u] \right)}   
\newmacro{\cptransp}{\normal{\Lambda}^{\pcoord}\left(u,\fpflow[u] \right)}	
\newmacro{\cptranscp}{\normal{\Lambda}^{\pcoord}_{\coord}\left(u,\fpflow[u] \right)}
\newmacro{\minind}{i}     
\newmacro{\maxind}{j}     
\newmacro{\degr}{k}       
\newmacro{\formalt}{\lambda}     
\newmacro{\gform}{\zeta}     
\newmacro{\volform}{\textup{vol}}     
\newmacro{\Hstar}{\star}		
\newmacro{\diff}{\mathrm{d}}		
\newmacro{\codiff}{\delta}		
\newmacro{\dirac}{\iota}     
\newmacro{\convo}{\ast}	     
\newmacro{\GreenOp}{\mathbb{G}}		
\newmacro{\fundsol}{\Phi}		     
\newmacro{\fundconst}{\frac{1}{\vdim(\vdim-2)  \abs{\ball_{\vdim}}}}     
\newmacro{\fundconstd}{\frac{1}{\vdim  \abs{\ball_{\vdim}}}}     
\newmacro{\fundconstdd}{\frac{-1}{\abs{\ball_{\vdim}}}}     
\newmacro{\avgdmax}{\bar{U}}     
\newmacro{\Lap}{\bigtriangleup}		
\newmacro{\HdRLap}{\bigtriangleup_{\texttt{\tiny HdR}}}		
\newmacro{\minLap}{\bigtriangleup_{\minvar}}     
\newmacro{\maxLap}{\bigtriangleup_{\maxvar}}     
\newmacro{\minmaxLap}{\myhg}	
\newmacro{\Har}{\mathbb{H}}     
\newmacro{\Ker}{\mathbb{K}}     
\newmacro{\bgame}{{ \diff\minvar^{\minind} \wedge \diff\maxvar^{\maxind} }}     
\newmacro{\stochfield}{\mathsf{V}}		
\newmacro{\bindex}{I}	
\newmacro{\compbindex}{{\bar{I}}}	
\newmacro{\lyapalt}{ \hat\lyap}     
\newmacro{\colyap}{\tilde\lyap}     
\newmacro{\error}{W}		
\newmacro{\ctime}{t}
\newmacro{\ctimealt}{s}
\newmacro{\cstart}{0}		
\newcommand{\radial}{r}		
\newmacro{\sset}{\mathcal{S}}		
\newmacro{\flowmap}{\Theta}		
\newmacro{\flowcurve}{\theta}		
\newcommandtwoopt{\flow}[2][\ctime][\point]{\flowmap_{#1}(#2)}
\newcommand{\orbit}[2][]{\flowcurve_{#1}(#2)}		
\newcommand{\dotorbit}[2][]{\dot\flowcurve_{#1}(#2)}		
\newmacro{\Pflowmap}{\lambda}		
\newcommandtwoopt{\Pflow}[2][\ctime][\point]{\Pflowmap_{#1}(#2)}
\newmacro{\Ent}{\textup{Ent}}
\def\drm{{ \mathrm{d} }}
\DeclareMathOperator{\ctgh}{ctgh}       
\newmacro{\vvec}{v}		
\newmacro{\vvecalt}{w}		
\newmacro{\mfd}{\mathcal{M}}		
\newmacro{\curve}{\gamma}          
\newcommand{\ptrans}[3]{\Gamma_{#1\to #2} \left( #3 \right)}      
\newmacro{\sect}{K}    
\newmacro{\rgrad}{ \nabla }
\newmacro{\retrbase}{{\mathcal{R}}}
\newmacro{\nbhd}{\mathcal{U}}		
\newmacro{\tinv}{m}		
\newmacro{\nn}{\nonumber}		
\newmacro{\rbound}{R}		
\newmacro{\noiselevel}{\sqrt{\run \log^{1+\frac{\epsilon}{2}} \run}}
\newmacro{\aux}{\tilde\lyap}		
\newcommand{\tspace}[1][\point]{\mathcal{T}_{#1} \points  }		
\newmacro{\tbundle}{ \mathcal{T}\points  }		
\newcommand{\coer}{R}
\newcommand{\ssstyle}{}
\newacro{LHS}{left-hand side}
\newacro{RHS}{right-hand side}
\newacro{iid}[i.i.d.]{independent and identically distributed}
\newacro{lsc}[l.s.c.]{lower semi-continuous}
\newacro{APT}{asymptotic pseudotrajectory}
\newacro{GD}{gradient dynamics}
\newacro{GF}{gradient flow}
\newacro{ICT}{internally chain-transitive}
\newacro{MDS}{martingale difference sequence}
\newacro{NE}{Nash equilibrium}
\newacro{NSE}{Nash\textendash Stampacchia equilibrium}
\newacro{ODE}{ordinary differential equation}
\newacro{RMD}{Riemannian mean dynamics}
\newacro{SA}{stochastic approximation}
\newacro{SFO}{stochastic first-order oracle}
\newacro{SG}{stochastic gradient}
\newacro{SP}{saddle-point}
\newacro{WAC}{weak asymptotic coercivity}
\newacro{AH}{Arrow\textendash Hurwicz}
\newacro{BDG}{Burkholder\textendash Davis\textendash Gundy}
\newacro{ConO}{consensus optimization}
\newacro{RM}{Robbins\textendash Monro}
\newacro{KW}{Kiefer\textendash Wolfowitz}
\newacro{GDA}{gradient descent/ascent}
\newacro{SGA}{symplectic gradient adjustment}
\newacro{SGD}{stochastic gradient descent}
\newacro{SGDA}{stochastic gradient descent/ascent}
\newacro{SPSA}{simultaneous perturbation stochastic approximation}
\newacro{ASGDA}[alt-SGDA]{alternating stochastic gradient descent/ascent}
\newacro{SEG}{stochastic extra-gradient}
\newacro{EG}{extra-gradient}
\newacro{PEG}{Popov's extra-gradient}
\newacro{RG}{reflected gradient}
\newacro{OG}{optimistic gradient}
\newacro{PPM}{proximal point method}
\newacro{HdR}{Hodge-de Rham Laplacian}
\newacro{HD}{Hodge decomposition}
\newacro{GAN}{generative adversarial network}
\newacro{NN}{neural network}
\newacro{FTRL}{``follow the regularized leader''}
\newacro{CGD}{Competitive Gradient Descent}
\newacro{wp1}[w.p.$1$]{with probability $1$}
\newacro{RRM}[RRM]{Riemannian Robbins\textendash Monro}
\newacro{RGDA}[RGDA]{Riemannian gradient descent(/ascent)}
\newacro{RSGD}[RSGD]{Riemannian stochastic gradient descent}
\newacro{REG}[REG]{Riemannian extra-gradient}
\newacro{RSEG}[RSEG]{Riemannian stochastic extra-gradient}
\newacro{RPEG}[RPEG]{Riemannian Popov's extra-gradient}
\newacro{RRG}[RRG]{Riemannian reflected gradient}
\newacro{ROG}[ROG]{Riemannian optimistic gradient}
\newacro{RPPM}[RPPM]{Riemannian proximal point method}
\newacro{RSGDA}[RSGDA]{Riemannian stochastic gradient descent(/ascent)}
\newacro{RASGDA}[R-alt-SGDA]{Riemannian alternating stochastic gradient descent/ascent}
\newacro{RKW}[RKW]{Riemannian Kiefer\textendash Wolfowitz}
\newacro{RAPT}[RAPT]{Riemannian asymptotic pseudotrajectory}
\newacro{NPG}{natural policy gradient}
\newacro{PFS}{parallel frame system}
\newacro{FCS}{Fermi coordinate system}
\newacro{NGD}{Natural Gradient Descent}
\newacro{RSGM}[RSGM]{Riemannian stochastic gradient method}
\newacro{RSCEG}{Riemannian stochastic corrected extragradient}
\begin{document}


\title{Riemannian stochastic approximation algorithms}		

\author
[M.~R.~Karimi]
{Mohammad Reza Karimi$^{\ast}$}
\address{$^{\ast}$\,%
Institute for Machine Learning, CAB G 65, Universitaetstrasse 6, 8092 Zurich, Switzerland.}
\EMAIL{\{mkarimi,yaping.hsieh\}@inf.ethz.ch}
\author
[Y.~P.~Hsieh]
{Ya-Ping Hsieh$^{\ast}$}
\author
[P.~Mertikopoulos]
{\\Panayotis Mertikopoulos$^{\diamond}$}
\address{$^{\diamond}$\,%
Univ. Grenoble Alpes, CNRS, Inria, Grenoble INP, LIG, 38000 Grenoble, France.}
\EMAIL{panayotis.mertikopoulos@imag.fr}
\author
[A.~Krause]
{Andreas Krause$^{\ast}$}
\EMAIL{krausea@inf.ethz.ch}

\subjclass[2020]{%
Primary 62L20, 37N40;
secondary 90C15, 90C47, 90C48.}

\keywords{%
Stochastic approximation;
Robbins-Monro algorithms;
Riemannian methods;
optimization on manifolds.}

\thanks{A one-page extended abstract of this paper was presented at COLT 2022.}

\begin{abstract}
%
%
We examine a wide class of stochastic approximation algorithms for solving (stochastic) nonlinear problems on Riemannian manifolds.
Such algorithms arise naturally in the study of Riemannian optimization, game theory and optimal transport, but their behavior is much less understood compared to the Euclidean case because of the lack of a global linear structure on the manifold.
We overcome this difficulty by introducing a suitable Fermi coordinate frame which allows us to map the asymptotic behavior of the \acf{RRM} algorithms under study to that of an associated deterministic dynamical system.
In so doing, we provide a general template of almost sure convergence results that mirrors and extends the existing theory for Euclidean \acl{RM} schemes, despite the significant complications that arise due to the curvature and topology of the underlying manifold.
We showcase the flexibility of the proposed framework by applying it to a range of retraction-based variants of the popular optimistic / extra-gradient methods for solving minimization problems and games, and we provide a unified treatment for their convergence.
\end{abstract}
\maketitle

\allowdisplaybreaks		
\acresetall		

\section{Introduction}
\label{sec:introduction}

Consider a nonlinear system of equations of the general form
\begin{equation}
\label{eq:root}
\tag{Root}
\text{Find $\sol\in\points$ such that $\vecfield(\sol) = 0$}
\end{equation}
where $\points$ is a smooth $\vdim$-dimensonal manifold and $\vecfield$ is a vector field on $\points$.
Root-finding problems of this type play a crucial role in many areas of mathematical programming and data science, from Riemannian optimization and game theory to reinforcement learning, signal processing,
and information theory.
In particular, in addition to standard minimization problems \textendash\ that is, when $\vecfield = -\nabla\obj$ for some smooth function $\obj$ on $\points$ \textendash\ the general form of \eqref{eq:root} includes bilevel and saddle-point problems, dynamic programming, games in normal form, and many other equilibrium problems that arise in practice.
For a range of applications and a comprehensive introduction to the topic, see \citet{AMS08}, \citet{Bou22}, and references therein.

The vast majority of methods for solving \eqref{eq:root} are iterative in nature, and they rely on building a successively finer ``model'' function which is applied to the last computed approximation of a root in order to get a new approximation.
Usually, this model function is based on the value of $\vecfield$ at a candidate solution;%
\footnote{More sophisticated methods \textendash\ like Newton's algorithm \textendash\ rely on a first-order Taylor approximation of $\vecfield$ around an iterate;
Halley's method employs a second-order model, and the hierarchy continues with the general class of Householder methods that use $k$-th order derivatives to build more precise polynomial models of $\vecfield$ around an iterate.
However, because these tensor methods involve the computation of higher-order derivatives of $\vecfield$, they may become highly impractical for moderate-to-high values of $\vdim$.}
in many cases however, even this first-order model is too costly or even impossible to compute \textendash\ \eg if $\vecfield(\point) = \exof{\orcl(\point;\sample)}$ for some random variable $\sample$ with unknown distribution.
In this case, a popular alternative is to rely on \acl{SA} algorithms that only require oracle access to a random \textendash\ and possibly incomplete \textendash\ approximation of $\vecfield$ at the queried point.

In this general context, when $\points = \R^{\vdim}$, the method of choice for solving \eqref{eq:root} is the \acdef{RM} algorithm
\begin{equation}
\label{eq:RM}
\tag{RM}
\next
    = \curr
		+ \curr[\step] \curr[\signal]
\end{equation}
where $\curr[\step] > 0$ is a variable step-size sequence and $\curr[\signal]$ is a random estimate of $\vecfield(\curr)$.
This method was introduced in the seminal papers of \citet{RM51} and \citet{KW52}, and the first general convergence results were obtained by \citet{Lju77,Lju78} for gradient problems.
This has subsequently led to substantial activity on the topic, with major contributions by \citet{BH96,BMP90, KC78,KY97}, and many others.
However, the linear structure of $\R^{\vdim}$ is deeply ingrained in all these works \textendash\ and the method itself \textendash\ preventing its use for solving \eqref{eq:root} in a manifold setting \textendash\ \eg the $\vdim$-dimensional torus for a robotic arm with $\vdim$ joints,
Grassman or Stiefel manifolds for robust principal component analysis,
hyperbolic space for text and graph embeddings, etc. 
Because of this, the applicability of \ac{RM} methods to general root-finding problems is significantly more narrow than one might expect, even when $\points$ has a relatively simple structure (like a matrix group or a Grassmannian).

\para{Our contributions in the context of related work}

In view of the above, our main objective is to bridge the gap between Euclidean and Riemannian \acl{SA} schemes by replacing the ``$+$'' operation in \eqref{eq:RM} with the Riemannian exponential map $\exp_{\curr}(\cdot)$ on $\points$ \textendash\ or, more generally (and often more tractably), a \emph{retraction} on $\points$ based at $\curr$.
In Riemannian optimization, this approach was pioneered by \citet{Bon13}, who examined the case where $\vecfield$ is the Riemannian gradient of some objective function $\obj$.
Subsequent works \cite{zhang2016first,tripuraneni2018averaging,boumal2019global,
criscitiello2019efficiently,lezcano2020curvature,wang2021no} expanded on the results of \citet{Bon13} for \acl{RSGD}, while similar results were obtained in \cite{ferreira2002proximal, li2009monotone, bento2017iteration, huang2021riemannian} for \aclp{RPPM}.%

All these works focus exclusively on the case where $\vecfield$ is a gradient field, so they do not apply to general, non-gradient instances of \eqref{eq:root}.
A partial extension to the non-gradient case was provided by a concurrent line of works which examined the use of \acl{REG} methods under the assumption of (geodesic) monotonicity \cite{ferreira2005singularities, tang2012korpelevich, neto2016extragradient, fan2020tseng, khammahawong2020extragradient, chen2021modified}.
This is a strong, convexity-type assumption which posits that $\vecfield$ globally points towards its (necessarily connected) root system in a suitable, geodesic sense;
convergence is then obtained following a similar line of reasoning as in the case of monotone operator theory in Hilbert spaces \citep{BC17}.

Our paper does not make any such assumptions and directly examines the dynamics of \acl{RRM} methods for general vector fields $\vecfield$.
In this regard, our main contributions can be summarized as follows:
\begin{enumerate}
\item
We introduce a \emph{generalized} \acli{RRM} template which includes as special cases all methods mentioned above (Riemannian \acl{SGD}, \acl{EG}, \aclp{PPM}, etc.), as well as a number of new \ac{SA} schemes for \eqref{eq:root}.
\item
Under some mild technical conditions on $\points$, we show that the sequence of generated points forms an ``approximate solution'' \textendash\ an \ac{APT} to be exact \textendash\ of an associated deterministic dynamical system (\cref{thm:APT}).
\end{enumerate}
This \acl{SA} result extends the seminal theory of \citet{BH96} for Euclidean \acl{RM} schemes to a Riemannian setting, and allows us to infer almost sure convergence of \ac{RRM} schemes to the \ac{ICT} sets of the underlying Riemannian dynamics (\cf \cref{cor:ICT,cor:ICT-expl}).
In gradient and strictly monotone problems, these \ac{ICT} sets are precisely the roots of $\vecfield$, so we readily recover many of the asymptotic convergence results mentioned above (often under much weaker assumptions).
In addition, as we show in \cref{sec:applications}, our framework applies to several settings beyond gradient or monotone systems \textendash\ such as ordinal potential games, supermodular games, and cooperative dynamics \textendash\ and covers a significantly wider class of \ac{SA} schemes.

\para{Tools, techniques, and related approaches}

In the absence of a linear structure on $\points$, the major challenge we have to overcome is the lack of a suitable coordinate frame within which to analyze the trajectories of Riemannian \ac{SA} algorithms.
This reflects the dichotomy that, unlike in the case of $\R^\vdim$, points and vectors on manifolds obey fundamentally different rules and have to be compared using different moving frames.
To circumvent this obstacle, we introduce an \emph{extended Fermi coordinate} frame inspired by \citet{manasse1963fermi}, and we use it to prove that Riemannian \ac{SA} schemes enjoy similar error bounds as in Euclidean spaces (up to some high-order terms that vanish in the long run).
The aggregation and propagation of these errors can then be controlled using arguments from martingale limit theory which ultimately yield the convergence properties mentioned above.

A concurrent approach to establish the \ac{APT} property in Riemannian \ac{SA} schemes is due to \citet{shah2021stochastic}, who assumes the existence of a local diffeomorphism mapping geodesic interpolations to linear interpolations in a Euclidean space. 
However, the existence of such a diffeomorphism on every point of $\points$ implies that the manifold is globally \emph{flat}, \ie essentially Euclidean \citep{iliev2006handbook};
this assumption is too restrictive for bona fide Riemannian applications, so the analysis of \cite{shah2021stochastic} is not relevant for our purposes.
An additional issue is that the error bounds employed by \citet[p. 1131]{shah2021stochastic} rule out vector fields with a rotational component \textendash\ such as $\vecfield(x,y) = (-y,x)$ on $\R^2$ \textendash\,  further limiting the applicability of their techniques to the setting under consideration.

Finally, \citet{durmus2020convergence,durmus2021riemannian} also recently consider a generic version of \acl{RM} schemes, with both vanishing and constant step-sizes.
The analysis of the latter type of schemes cannot lead to convergence with probability $1$, so the results of \cite{durmus2021riemannian} are necessarily ergodic in nature and hence beyond our paper's scope.
The setting of \cite{durmus2020convergence} is closer in spirit to our own, and it also accounts for the effects of bias in the queries to $\vecfield$;
however, the results obtained therein concern dynamics that admit a \emph{Lyapunov function} \textendash\ the so-called ``gradient-like'' case \citep{Ben99} \textendash\ so there is no overlap with our analysis.

\para{Basic notions and notation}

Throughout our paper, $\points$ will denote a $\vdim$-dimensional, geodesically complete, Riemannian manifold.
We will write $\inner{\cdot}{\cdot}_\point$, $\point\in\points$, for its underlying metric, $\norm{\cdot}_{\point}$ for the induced norm, and we will assume that the sectional curvatures of $\points$ are bounded from above and below by $K_\textup{up}$ and $K_\textup{low}$ respectively.

For any curve $\curve$ on $\points$, the notation $\dot\curve(\ctime)$ will always denote its velocity at time $\ctime$.
Given any pair of points $\point, \pointalt \in \points$ and a vector $\vvec$ in the tangent space
at $\point$, $\tspace$, we denote by $\ptrans{\point}{\point'}{\vvec} \in \tspace[\point']$ the vector obtained by parallel transporting $\vvec$ along the minimizing geodesic connecting $\point$ and $\point'$;
if minimizing geodesics are not unique, $\ptrans{\point}{\point'} \vvec $ will be understood as parallel transport along any of them.
We also say that a vector field $\vecfield$ on $\points$ is (geodesically) $\lips$-Lipschitz if, for all $\point, \point' \in \points$, we have
\begin{equation}
\norm{ \ptrans{\point}{\point'}{\vecfield(\point)} - \vecfield(\point')  }_{\point'}
	= \norm{ \vecfield(\point)  - \ptrans{\point'}{\point}{\vecfield(\point')}}_{\point }
	\leq \lips \dist(\point, \point'),
\end{equation}
where $\dist(\cdot,\cdot)$ denotes the distance function on $\points$ induced by $\inner{\cdot}{\cdot}$.
All vector fields in this paper are assumed to be $\lips$-Lipschitz, complete and bounded, \ie $\vbound \defeq \sup_{\point\in\points} \norm{\vecfield(\point)}_\point < \infty$.
Finally, the Riemannian gradient of a smoooth function $\obj\from\points\to\R$ will be denoted by $\nabla\obj$.

For a detailed account, we refer the reader to the masterful treatment of \citet{Lee97}.
\acresetall
\acused{wp1}

\section{Riemannian Robbins\textendash Monro algorithms: Definitions and assumptions}
\label{sec:RRM}

\subsection{The \acl{RRM} template}

We begin by discussing the basic template of \acdef{RRM} algorithms.
As we stated before, the main difference with their Euclidean counterpart is that addition along ``straight lines'' is replaced with the Riemannian exponential mapping.
This leads to the abstract update rule
\begin{equation}
\label{eq:RRM}
\tag{RRM}
\next
	= \expof{\curr}{\curr[\step]\parens*{\vecfield(\curr) + \curr[\error]}}
\end{equation}
where
\begin{enumerate}
\item
$\curr \in \points$ denotes the state of the algorithm at each iteration counter $\run = \running$
\item
$\curr[\error] \in \tspace[{\scriptscriptstyle\curr}]$ is an abstract error term (described in detail below).
\item
$\curr[\step] > 0$ is the method's step-size (also discussed below).
\end{enumerate}
In the above, we tacitly assume that the error term $\curr[\error]$ is generated \emph{after}
$\curr$, so it is not adapted to the history $\curr[\filter] \defeq \sigma(\init,\dotsc,\curr)$ of $\curr$ \textendash\ that is, $\curr[\error]$ is random relative to $\curr[\filter]$.
In addition, to differentiate between ``random'' (zero-mean) and ``systematic'' (non-zero-mean) errors, it will be convenient to further decompose $\curr[\error]$ as
\begin{equation}
\label{eq:error}
\curr[\error]
	= \curr[\noise]
		+ \curr[\bias]
\end{equation}
where
$\curr[\noise] = \curr[\error] - \exof{\curr[\error] \given \curr[\filter]}$ captures the random, zero-mean part of $\curr[\error]$,
while
$\curr[\bias] = \exof{\curr[\error] \given \curr[\filter]}$ represents the systematic component thereof.

To quantify all this, we will assume in the sequel that $\curr[\noise]$ and $\curr[\bias]$ are bounded as
\begin{equation}
\label{eq:signal-stats}
\exof{\norm{\curr[\noise]}_{\scriptscriptstyle\curr}^{2} \vert \curr[\filter]}
	\leq \curr[\sdev]^{2}
	\quad
	\textrm{and}
	\quad
\exof{\norm{\curr[\bias]}_{\scriptscriptstyle\curr} \given \curr[\filter]}
	\leq \curr[\bbound]
  \quad \text{\as}
\end{equation}
where $\curr[\sdev]$ and $\curr[\bbound]$ are to be construed as upper bounds on the noise and bias of the error terms entering \eqref{eq:RRM}.
Finally, for concision, we will also write
\begin{equation}
\label{eq:signal}
\curr[\signal]
	= \vecfield(\curr) + \curr[\error]
\end{equation}
so $\curr[\signal]$ can be seen as a noisy \textendash\ and potentially biased \textendash\ estimator of $\vecfield(\curr)$ in \eqref{eq:RRM}.
Obviously, $\curr[\signal] = \vecfield(\curr)$ whenever $\curr[\bbound] = \curr[\sdev] = 0$;
we will refer to this case as ``deterministic''.

\subsection{Stochastic approximation}
\label{sec:cont2disc}

Our main goal in the sequel will be to connect the asymptotic behavior of the trajectories of \eqref{eq:RRM} to a continuous-time dynamical system on $\points$.
To do so, in analogy with the \acs{ODE} method for standard, Euclidean \acl{RM} schemes \citep{KY97,Ben99}, we will view \eqref{eq:RRM} as an inexact (forward) Euler discretization of the \acli{RMD}
\begin{equation}
\label{eq:RMD}
\tag{RMD}
\dotorbit{\ctime}
	= \vecfield(\orbit{\ctime})
\end{equation}
and we will try to establish a measure of ``closeness'' between the discrete-time sequence $\curr$ generated by \eqref{eq:RRM} and the continuous-time orbits $\orbit{\ctime}$ of \eqref{eq:RMD}.

In the Euclidean case,
this is provided by taking an affine interpolation $\apt{\ctime}$ of $\curr$ that agrees with $\curr$ at all instances of the ``effective time'' variable $\curr[\efftime] = \sum\nolimits_{\runalt=\start}^{\run-1} \iter[\step]$, viz.
\begin{equation}
\label{eq:interpolation-Eucl}
\apt{\ctime}
	= \curr
		+ \frac{\ctime - \curr[\efftime]}{\next[\efftime] - \curr[\efftime]}
			(\next - \curr)
	\quad
	\text{for all $\ctime \in [\curr[\efftime],\next[\efftime]]$, $\run=\running$}
\end{equation}
Of course, this definition is not meaningful in a Riemannian setting because of the lack of an affine structure on $\points$.
Instead, noting that the increments in \eqref{eq:interpolation-Eucl} can be rewritten as $(\next - \curr) / (\next[\efftime] - \curr[\efftime]) = \curr[\signal]$, a natural Riemannian analogue would be to follow the geodesic emanating from $\curr$ along $\curr[\signal]$ until reaching $\next$.
With this in mind, we define the \emph{geodesic interpolation} $\apt{\ctime}$ of $\curr$ as
\begin{equation}
\label{eq:interpolation}
\tag{GI}
\apt{\ctime}
	= \expof*{\curr}{(\ctime - \curr[\efftime]) \curr[\signal]}
	\quad
	\text{for all $\ctime \in [\curr[\efftime],\next[\efftime]]$, $\run\geq\start$},
\end{equation}
so, by construction,
\begin{enumerate*}
[(\itshape a\upshape)]
\item
$\apt{\curr[\efftime]} = \curr$ for all $\run$;
and
\item
each segment of $\apt{\ctime}$ is a geodesic.
\end{enumerate*}

Now, to compare $\apt{\ctime}$ to the solution orbits of \eqref{eq:RMD}, let $\flowmap\from\R_{+}\times\points\to\points$ denote the \emph{flow} of \eqref{eq:RMD}, \ie $\flow[h][\point]$ is simply the position at time $h\geq0$ of the solution orbit of \eqref{eq:RMD} that starts at $\point \in \points$ (recall here that $\vecfield$ is complete).
We then have the following notion of ``asymptotic closeness'':

\begin{definition}[\citefull{BH96}]
\label{def:RAPT}
We say that $\apt{\ctime}$ is an \acdef{APT} of the mean dynamics \eqref{eq:RMD} if, for all $\horizon > 0$, we have
\begin{equation}
\label{eq:RAPT}
\lim_{\ctime\to\infty}
	\sup_{0 \leq h \leq \horizon}
		\dist(\apt{\ctime + h} , \flow[h][\apt{\ctime}])
	= 0.
\end{equation}
\end{definition}

Intuitively, \cref{def:RAPT} states that one cannot distinguish between the geodesically interpolated process $\apt{\ctime + h}$ and the orbit of \eqref{eq:RMD} that starts at $\apt{\ctime}$ as $\ctime\to\infty$.
This is a highly non-trivial requirement, so much of the analysis to follow hinges on establishing exactly this property;
we carry all this out in \cref{sec:analysis}, where we also describe the precise connection between the asymptotic behavior of \eqref{eq:RRM} and that of \eqref{eq:RMD}.

\subsection{Technical assumptions}
\label{sec:assumptions}

We conclude this section with the basic assumptions that underlie the rest of our paper.
These are as follows:

\begin{assumption}
[\acl{RM} step-sizes]
\label{asm:step}
The step-size sequence $\curr[\step]$, $\run=\running$, of \eqref{eq:RRM} satisfies the \acl{RM} summability conditions
$\sum_{\run} \curr[\step] = \infty$
and
$\sum_{\run} \curr[\step]^2 < \infty$.
\end{assumption}

\begin{assumption}
[Error bounds]
\label{asm:errorbounds}
The bounds on the noise and bias in \eqref{eq:signal-stats} satisfy
\begin{equation}
\label{eq:errorbounds}
\sum_{\run=1}^{\infty} \curr[\step]^{2} \exof*{\curr[\sdev]^{2}}
	< \infty,
	\quad
	\sum_{\run=1}^{\infty} \curr[\step] \exof*{\curr[\bbound]}
	< \infty, 
	\quad
	\text{and}
	\quad
	\curr[\bbound] \to 0 \;\; \text{ \as.} 
\end{equation}
\end{assumption}

Variants of the above assumptions are standard in the context of (Euclidean) \acl{RM} methods, \cf \citet{KC78,Ben99}, and references therein.
Nonetheless, it is worth noting that \cref{asm:step} lies under the explicit control of the algorithm designer,
while \cref{asm:errorbounds} is \emph{implicit} and depends on the specific problem at hand \textendash\ the mechanism providing access to $\vecfield$, the specific form of \eqref{eq:RRM}, etc.
In this regard, \cref{asm:errorbounds} is more delicate than \cref{asm:step};
we discuss this issue in detail in \cref{sec:applications}, where we show that \cref{asm:errorbounds} is indeed satisfied for a wide range of practical algorithms that adhere to the general template \eqref{eq:RRM}.

Moving forward, to exclude cases where \eqref{eq:RRM} becomes unstable over time, a standard practice in the literature is to assume that the sequence $\curr$ is contained in a compact subset of $\points$, a property known as \emph{precompactness}.
Formally, we have:

\begin{assumption}
[Precompactness]
\label{asm:compact}
The set of iterates $\setdef{\curr}{\run=\running}$ has compact closure in $\points$.
\end{assumption}

Albeit standard in the literature \cite{KC78,BMP90,Ben99}, \cref{asm:compact} may be difficult to verify if $\points$ is not itself compact (\eg if $\points = \R^{\vdim}$).
To account for this, we introduce in \cref{sec:analysis} a set of structural hypotheses on $\points$ and $\vecfield$ which guarantee that \cref{asm:compact} holds \acl{wp1}.

With all this in hand, our last blanket assumption is a technical requirement that interfaces between the geometry of $\points$ and the dynamics of \eqref{eq:RMD}.
To state it, recall first that $\point\in \points$ is said to be \emph{conjugate} to $\pointalt =
\expof{\point}{\vvec}$ along the geodesic $\expof{\point}{\ctime\vvec}$ if the exponential map $\exp_{\point}(\cdot)$ is not a diffeomorphism in a neighborhood of $\vvec$ \citep{Lee97}.
In addition, define the \emph{Picard flow} $\Pflowmap\from\R_{+}\to\points$ associated with $\apt{\ctime}$ to be the dynamical system
\begin{equation}
\label{eq:Pflow}
\tag{PFlow}
\dot{\Pflowmap}(h)
     = \ptrans{\apt{\ctime + h}}{\Pflowmap(h)}{\vecfield(\apt{\ctime +h})}	
\end{equation}with initial condition ${\Pflowmap}(0) = \apt{\ctime }$.%
\footnote{The term ``Picard flow'' stems from the fact that the integral $\int_{0}^{h} \vecfield(\apt{\ctime+s}) \dd s$ is the basic iteration in Picard's method of successive approximations for solving an \acs{ODE} in Euclidean spaces.
In the case of \eqref{eq:Pflow}, the parallel transport is the extra ingredient required to express the idea of ``integrating $\vecfield$ along $\apt{\ctime}$'', so \eqref{eq:Pflow} can be seen as a bona fide generalization of the Picard iteration to Riemannian manifolds.}
Finally, given some $\horizon,\ctime > 0$, consider the sets
\begin{subequations}
\begin{align}
\conj_{\Pflowmap}(\ctime,\horizon)
	&\defeq \setdef*{h \in [0, \horizon ]}{\apt{\ctime + h} \textup{ is conjugate to } \Pflowmap(h)},
	\\
\conj_{\flowcurve}(\ctime,\horizon)
	&\defeq\setdef*{h \in [0, \horizon]}{\apt{\ctime + h} \textup{ is conjugate to } \flowmap_{h}(\apt{\ctime})},
\end{align}
\end{subequations}
and let $\conj(\ctime,\horizon) = \conj_{\Pflowmap}(\ctime,\horizon) \cup \conj_{\flowcurve}(\ctime,\horizon)$.
We then make the following assumption:

\begin{assumption}
[Nowhere dense conjugates]
\label{asm:conjugate}
The set $\conj(\ctime,\horizon)$ is nowhere dense in $[0, \horizon]$ for sufficiently large $t>0$ and for all $T>0$.
\end{assumption}

At first sight, \cref{asm:conjugate} may appear quite opaque but it is otherwise quite mild.
Indeed, since the set of points conjugate to $\apt{\ctime+h}$ is at most one-dimensional \citep{War65}, the only way that \cref{asm:conjugate} can fail is if the curves $\apt{\ctime+h}$ and $\flowcurve(h)$ or $\Pflowmap(h)$ simultaneously traverse the same one-dimensional submanifold of $\points$ \textendash\ a fact which occurs with probability $0$ if the distribution of $\curr[\error]$ is non-singular.
It is also worth noting that \cref{asm:conjugate} is automatically satisfied on negatively curved spaces by the Cartan–Hadamard theorem (\cf \cref{prop:bounded} in the next section) and, finally, it is straightforward to verify \cref{asm:conjugate} manually on many of the manifolds that arise in practical applications \textendash\ such as spheres, Grassmannians, Stiefel manifolds and fixed-rank spectrahedra, \cf \citep{Lee97,AMS08,massart2019curvature} and references therein.
We discuss all this in more detail in the next section.

\section{Analysis and main results}
\label{sec:analysis}

\subsection{Statement and discussion of main results}
\label{sec:statements}

The connecting tissue between \aclp{APT} and the mean dynamics \eqref{eq:RMD} is provided by the \acdef{ICT} sets of \eqref{eq:RMD}.
These are defined as follows:

\begin{definition}[\citefull{Ben99}]
\label{def:ICT}
Let $\sset$ be a nonempty compact subset of $\points$.
Then:
\begin{enumerate}
\item
$\sset$ is \emph{invariant} under \eqref{eq:RMD} if $\flow[\ctime][\sset] = \sset$ for all $\ctime\in\R$.
\item
\label{item:b}
$\sset$ is an \emph{attractor} of \eqref{eq:RMD} if it is invariant under \eqref{eq:RMD} and there exists a compact neighborhood $\cpt$ of $\sset$ such that $\lim_{\ctime\to\infty} \dist(\flow,\sset) = 0$ uniformly in $\point\in\cpt$.
\item
$\sset$ is \acdef{ICT} if it is invariant and $\flowmap\vert_{\sset}$ admits no attractors other than $\sset$.
\end{enumerate}
\end{definition}

In words, \ac{ICT} sets are the ``terminal objects'' of the dynamics \eqref{eq:RMD}:
the orbits that converge to an \ac{ICT} set cannot be contained in a smaller subset thereof, so \ac{ICT} sets may be viewed as minimal connected periodic orbits up to arbitrarily small numerical errors \citep[][Prop.~5.3]{Ben99}.
The importance of this property for the theory of stochastic approximation is owed to the following theorem:

\begin{theorem}[\citefull{BH96}]
\label{thm:ICT}
Let $\apt{\ctime}$, $\ctime\geq0$, be a precompact \acl{APT} of \eqref{eq:RMD}.
Then the limit set $\limset(\state) \defeq \intersect_{\ctime\geq0} \cl\setdef{\apt{\ctimealt}}{\ctimealt\geq\ctime}$ of $\state$ is an \ac{ICT} set of \eqref{eq:RMD}.
\end{theorem}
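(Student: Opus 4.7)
The strategy is to adapt the classical Benaïm-Hirsch argument to the Riemannian setting, leveraging the fact that both the definition of an \ac{APT} (\cref{def:RAPT}) and the notion of an \ac{ICT} set (\cref{def:ICT}) are metric in nature, involving only the Riemannian distance function $\dist(\cdot,\cdot)$ and the continuous flow $\flowmap$. Since $\vecfield$ is $\lips$-Lipschitz and $\points$ is geodesically complete, the flow $\flowmap$ is defined for all time and depends continuously on its initial condition on compact sets, which is essentially all that the Euclidean proof of \citet{BH96} requires.

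First, I would verify that $\limset(\state)$ is nonempty and compact: by precompactness (\cref{asm:compact}), $\cl\setdef{\apt{\ctimealt}}{\ctimealt\geq\ctime}$ is a nested family of nonempty compact subsets of $\points$, so its intersection is nonempty and compact. Next, I would show invariance: for $\point\in\limset(\state)$, pick $\ctime_\run\to\infty$ with $\apt{\ctime_\run}\to\point$; for $h\geq 0$, \eqref{eq:RAPT} yields $\dist(\apt{\ctime_\run+h},\flow[h][\apt{\ctime_\run}])\to 0$, while continuity of $\flowmap$ gives $\flow[h][\apt{\ctime_\run}]\to\flow[h][\point]$, so $\flow[h][\point]\in\limset(\state)$. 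For $h<0$, apply the same procedure to $\apt{\ctime_\run+h}$ (which remains in the precompact closure), extract a convergent subsequence whose limit $\pointalt$ satisfies $\flow[-h][\pointalt]=\point$ by the same \ac{APT} argument, and conclude $\flow[h][\point]=\pointalt\in\limset(\state)$.

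Finally, I would establish internal chain transitivity. Given $\point,\pointalt\in\limset(\state)$ and $\eps,\horizon>0$, choose $\ctime_\run\to\infty$ and $\ctimealt_\run>\ctime_\run$ with $\ctimealt_\run-\ctime_\run\to\infty$, $\apt{\ctime_\run}\to\point$, and $\apt{\ctimealt_\run}\to\pointalt$. Partition each interval $[\ctime_\run,\ctimealt_\run]$ into consecutive sub-intervals of length in $[\horizon,2\horizon]$, giving intermediate times $\ctime_\run^{(0)}<\cdots<\ctime_\run^{(m_\run)}$ and points $\apt{\ctime_\run^{(j)}}$. The \ac{APT} property supplies the uniform bound $\dist(\apt{\ctime_\run^{(j+1)}},\flow[\ctime_\run^{(j+1)}-\ctime_\run^{(j)}][\apt{\ctime_\run^{(j)}}])<\eps$ for all $j$ once $\run$ is large. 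A standard diagonal extraction (using \cref{asm:compact}) yields limit points in $\limset(\state)$ linking $\point$ to $\pointalt$ and forming an $(\eps,\horizon)$-pseudo-orbit entirely within $\limset(\state)$, which is precisely the chain-transitivity condition.

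The principal technical obstacle is guaranteeing that every intermediate node of the constructed pseudo-orbit actually lies in $\limset(\state)$ rather than merely in $\points$; this is what the diagonal extraction accomplishes, and it is the only step where precompactness is used in an essential way. The curvature and topology of $\points$ enter the argument only through $\dist(\cdot,\cdot)$ and the continuity of $\flowmap$, so the conjugate-point obstructions governed by \cref{asm:conjugate} are invisible here \textendash\ they are needed only for establishing the \ac{APT} property itself, which is taken as a hypothesis in the present theorem.
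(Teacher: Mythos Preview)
The paper does not prove \cref{thm:ICT}; it simply cites it from \citet{BH96} and notes in the subsequent remark that the original statement is for abstract metric spaces, of which a Riemannian manifold with its distance function is a special case. Your proposal therefore supplies a proof where the paper offers none, and your argument is the standard Bena\"im--Hirsch one carried out in the metric category, which is correct in spirit.

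Two points are worth tightening. First, the paper's \cref{def:ICT} defines \ac{ICT} via ``no proper attractor for the restricted flow'', not via chain transitivity; the two are equivalent for compact invariant sets (this is essentially Conley's theorem, \cf \citep[Prop.~5.3]{Ben99}), but you should invoke that equivalence explicitly rather than treat chain transitivity as the definition. Second, your ``diagonal extraction'' step is not quite the right device, because the number of intermediate nodes $m_\run$ in the chain typically grows without bound as $\run\to\infty$, so one cannot simply pass to subsequential limits coordinate-by-coordinate. The clean route is to observe that, since $\limset(\state)$ is the limit set, for any $\eps'>0$ there exists $\ctime_0$ such that $\dist(\apt{\ctime},\limset(\state))<\eps'$ for all $\ctime\geq\ctime_0$; then pick, for each intermediate time $\ctime_\run^{(j)}$, a point $z_j\in\limset(\state)$ with $\dist(\apt{\ctime_\run^{(j)}},z_j)<\eps'$, set $z_0=\point$ and $z_{m_\run}=\pointalt$, and use the \ac{APT} bound together with the uniform continuity of $\flowmap$ on the precompact closure to conclude that $(z_0,\dotsc,z_{m_\run})$ is an $(\eps,\horizon)$-chain in $\limset(\state)$. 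This finite-$\run$ construction avoids any limiting procedure on the chain itself.
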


\begin{remark*}
\Cref{thm:ICT} above was originally stated in the context of abstract metric spaces;
we have adapted it here to the Riemannian setting of \eqref{eq:RMD} for concision and concreteness.
\endenv
\end{remark*}

The limit set theorem above provides a fundamental link between \acp{APT} and the long-run behavior of \eqref{eq:RMD} as captured by its \ac{ICT} sets.
That being said, the \ac{APT} property itself may be difficult to verify from first principles, so the application of \cref{thm:ICT} to \acl{RRM} algorithms can be just as difficult.
In the Euclidean case ($\points = \R^{\vdim}$), \citet{BH96} address this issue via a series of criteria under which standard (Euclidean) \acl{RM} methods give rise to an \ac{APT} of the associated mean dynamics \citep[][Props.~4.1 and 4.2]{Ben99}.
Unfortunately however, in a bona fide Riemannian setting, these criteria cannot be used because they are inextricably tied to the affine structure of $\R^{\vdim}$;
as a result, it is not clear how to leverage \cref{thm:ICT} to obtain a theory of stochastic approximation for \acl{RM} methods on Riemannian manifolds.
We tackle this question below: 

\begin{theorem}
\label{thm:APT}
Suppose that \crefrange{asm:step}{asm:conjugate} hold.
Then, \acl{wp1}, the geodesic interpolation $\apt{\ctime}$ of the sequence of iterates $\curr$ of \eqref{eq:RRM} is an \ac{APT} of \eqref{eq:RMD}.
\end{theorem}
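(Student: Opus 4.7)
The plan is to fix a time horizon $\horizon>0$ and, for each ``base time'' $\ctime$ (eventually sent to infinity), decompose the error $\dist(\apt{\ctime+h},\flow[h][\apt{\ctime}])$ via the Picard flow as intermediate object:
\[
\dist\bigl(\apt{\ctime+h},\flow[h][\apt{\ctime}]\bigr)
   \leq \dist\bigl(\apt{\ctime+h},\Pflowmap(h)\bigr)
        + \dist\bigl(\Pflowmap(h),\flow[h][\apt{\ctime}]\bigr),
\]
and then bound each term separately. The rationale is that $\Pflowmap$ and $\flowmap$ share the same initial point $\apt{\ctime}$ and differ only in the vector field being integrated along themselves (transported vs.\ native), while $\apt{\ctime+h}$ and $\Pflowmap(h)$ differ only in whether the driving velocity is the noisy signal $\iter[\signal]$ or the true $\vecfield$. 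Controlling the second discrepancy will be a standard Gr\"onwall-type argument based on the geodesic Lipschitz hypothesis on $\vecfield$ and the fact that parallel transport is an isometry between tangent spaces: indeed, if we set $r(h) = \dist(\Pflowmap(h),\flow[h][\apt{\ctime}])$, then $\dot r \leq \lips\, r + o(1)$ by the Lipschitz condition combined with $\dist(\apt{\ctime+h},\Pflowmap(h))\to0$, giving $r(h)\leq Ce^{\lips\horizon}o(1)$.

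The hard work is thus bounding $\dist(\apt{\ctime+h},\Pflowmap(h))$, and this is where the Fermi coordinate machinery enters. My plan is to set up, along the curve $h\mapsto\Pflowmap(h)$, an \emph{extended Fermi coordinate frame} $\fdiffeo$ in a tubular neighborhood obtained by parallel-transporting an orthonormal basis of $\tspace[{\apt{\ctime}}]$ along $\Pflowmap$. In these coordinates, the metric coincides with the Euclidean metric to first order along the central curve, with second-order Taylor remainder governed by the sectional curvature and hence uniformly controlled on the compact set furnished by \cref{asm:compact}. Writing $\fapt[h] \defeq \fdiffeo^{-1}(\apt{\ctime+h})$ and $\fpflow[h] \defeq \fdiffeo^{-1}(\Pflowmap(h)) = 0$, the discrete-time update \eqref{eq:RRM} becomes, inside this coordinate chart,
\[
\fapt[\next[\efftime]-\ctime]
   = \fapt[\curr[\efftime]-\ctime]
     + \curr[\step]\bigl(\vecfield(\curr) + \curr[\error]\bigr)_{\text{coord}}
     + \bigoh(\curr[\step]^2),
\]
and the control of $\|\fapt\|$ reduces to bounding
\[
\sum_{\runalt} \iter[\step]\,\ptrans{\iter}{\apt{\ctime+h}}\!\iter[\noise],
\quad
\sum_{\runalt} \iter[\step]\,\iter[\bias],
\quad
\sum_{\runalt}\iter[\step]^{2}(1+\iter[\sdev]^{2})
\]
through appropriate moving frames. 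For the martingale part I would invoke the Burkholder--Davis--Gundy / Doob inequality together with the summability $\sum\iter[\step]^{2}\exof{\iter[\sdev]^{2}}<\infty$ from \cref{asm:errorbounds}; for the bias part the summability $\sum\iter[\step]\exof{\iter[\bbound]}<\infty$ and the \as\ vanishing of $\iter[\bbound]$ give the conclusion; the curvature/second-order remainder is $\bigoh(\iter[\step]^{2})$ and thus summable by \cref{asm:step}.

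The chart $\fdiffeo$ only exists where the exponential map along $\Pflowmap$ is a diffeomorphism, and this is precisely where \cref{asm:conjugate} is used: on $[0,\horizon]\setminus\conj(\ctime,\horizon)$, a nowhere-dense set, one obtains the local Fermi chart, and by continuity of distance and density of the complement the bounds extend to the whole interval. Modulo this technicality one concludes, almost surely,
\[
\limsup_{\ctime\to\infty}\sup_{0\leq h\leq\horizon}\dist\bigl(\apt{\ctime+h},\Pflowmap(h)\bigr) = 0,
\]
and combining with the Gr\"onwall step yields the \ac{APT} property.

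The main obstacle I anticipate is not the martingale/summability bookkeeping, which mirrors Benaïm--Hirsch, but rather setting up the Fermi coordinate machinery so that \emph{the second-order remainder depends only on uniform curvature bounds on the compact hull of $\{\curr\}$ and does not accumulate over the window $[0,\horizon]$}. In particular, one needs to track how parallel transport along $\Pflowmap$ interacts with the piecewise-geodesic nature of $\apt{\ctime+\cdot}$, and to ensure that the connecting minimal geodesics between $\curr$, $\apt{\ctime+h}$ and $\Pflowmap(h)$ stay inside a common normal neighborhood. Handling conjugate-point crossings cleanly \textendash\ in a way that keeps the coordinate estimates uniform as $\ctime\to\infty$ \textendash\ is the most delicate part and is precisely where \cref{asm:conjugate} plus the Cartan--Hadamard-type structural hypothesis become indispensable.
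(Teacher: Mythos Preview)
Your overall architecture \textendash\ triangle inequality through the Picard flow, then a Gr\"onwall-type bound for $\dist(\Pflowmap,\flowcurve)$ and Fermi-coordinate control of $\dist(\apt{\ctime+h},\Pflowmap(h))$ \textendash\ is close in spirit to the paper, but the execution diverges in two ways and contains a genuine gap. First, the paper builds the Fermi frame along the \emph{interpolant} $h\mapsto\apt{\ctime+h}$, not along $\Pflowmap$. This is not a cosmetic choice: because $\apt{\ctime+\cdot}$ is piecewise geodesic and the Fermi frame is parallel-transported along it, the velocity $\dot X(\ctime+h)$ has an \emph{exact} expression in that frame (it is literally $\curr[\signal]$ in the basis $\{\bvec_\coord(h)\}$; see eq.~(3.31)). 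By centering at $\Pflowmap$ instead, neither $\dot\Pflowmap(h)=\ptrans{\apt{\ctime+h}}{\Pflowmap(h)}{\vecfield(\apt{\ctime+h})}$ nor $\dot X(\ctime+h)=\ptrans{\curr}{\apt{\ctime+h}}{\curr[\signal]}$ simplifies in the Fermi frame, and you must compare three distinct parallel transports rather than one. Second, the paper does not decompose via the manifold triangle inequality; it writes $\dist(\apt{\ctime+h},\flowcurve(h))=\|\fflow[h]\|_2$ in one chart and bounds $\|\fflow[h]\|_2+\|\fpflow[h]\|_2$ jointly by a single Gr\"onwall loop.

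The concrete gap is your displayed update formula. In Fermi coordinates along any curve $c$, the Fujita\textendash Onsager\,/\,Takahashi lemma (\cref{lem:fcoord}) gives $\dot{\tilde\gamma}_\coord(h)=\dot\gamma_\coord(h)-\dot c_\coord(h)+\bigoh(\|\tilde\gamma(h)\|_2^2)$: you have dropped the $-\dot c_\coord$ term (the velocity of the central curve $\Pflowmap$), and the remainder is $\bigoh(\|\fapt[h]\|_2^2)$ per unit time, not $\bigoh(\curr[\step]^2)$. Since $\|\fapt\|$ is precisely what you are trying to bound, this remainder cannot be summed directly; it has to be converted to $\bigoh_R(\|\fapt\|)$ via the a~priori bound $\|\fapt\|\leq R$ and then absorbed into the Gr\"onwall constant, exactly as the paper does in Step~6. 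Finally, you never invoke the comparison between the differential of $\exp$ and parallel transport (\cref{lem:dexp_vs_pt}); without it you cannot pass from the normal-coordinate frame (in which \cref{lem:fcoord} expresses $\dot\gamma_\coord$) to the parallel-transported frame (in which the Lipschitz bound \eqref{eq:vec_in_pcoord} applies). This Fermi-to-parallel switch is the technical crux of the paper's Steps~4--5 and produces the additional error terms $R_3,R_4$ governed by the curvature bound $K_{\max}$; it is not a detail that can be absorbed into ``second-order Taylor remainder''.
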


\cref{thm:APT} plays a pivotal role in our paper and is the cornerstone for the analysis to follow.
In particular, by invoking \cref{thm:ICT,thm:APT} in tandem, we readily obtain the following characterization of the limiting behavior of \eqref{eq:RRM}:

\begin{corollary}
\label{cor:ICT}
Suppose that \crefrange{asm:step}{asm:conjugate} hold.
Then $\curr$ converges to an \ac{ICT} set of \eqref{eq:RMD} \acs{wp1}.
\end{corollary}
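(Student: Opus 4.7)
The plan is to chain together Theorems~\ref{thm:ICT} and \ref{thm:APT} in a fairly direct way; the corollary is essentially a packaging statement, so the only real work is to verify that the hypotheses of Theorem~\ref{thm:ICT} are met almost surely and then to transfer a conclusion about the interpolated trajectory $\apt{\ctime}$ back to the discrete iterates $\curr$.

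First, I would invoke Theorem~\ref{thm:APT}: under Assumptions~\ref{asm:step}--\ref{asm:conjugate}, the geodesic interpolation $\apt{\ctime}$ defined in \eqref{eq:interpolation} is an \ac{APT} of \eqref{eq:RMD} with probability one. Next, to apply Theorem~\ref{thm:ICT}, I need to know that $\apt{\ctime}$ is \emph{precompact}, meaning the closure of its image lies in a compact subset of $\points$. This is where Assumption~\ref{asm:compact} enters: the set $\{\curr : \run = \start, \afterstart, \dots\}$ has compact closure, call it $\cpt$. Since each segment of $\apt{\ctime}$ is a geodesic from $\curr$ of length at most $\curr[\step] \norm{\curr[\signal]}_{\curr}$, and since $\vecfield$ is bounded by $\vbound$ on the (larger) compact set of interest while $\curr[\step] \to 0$ and the error terms are controlled \as by Assumption~\ref{asm:errorbounds}, every such geodesic segment stays within a uniformly bounded geodesic neighborhood of $\cpt$. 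The resulting enlarged set is still contained in a compact subset of $\points$ by geodesic completeness, so $\apt{\ctime}$ is precompact \as

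Having secured both the \ac{APT} property and precompactness, Theorem~\ref{thm:ICT} applies \as and yields that the limit set
\begin{equation*}
\limset(\state) = \intersect_{\ctime \geq 0} \cl\setdef{\apt{\ctimealt}}{\ctimealt \geq \ctime}
\end{equation*}
is an \ac{ICT} set of \eqref{eq:RMD}. The last step is to transfer this statement from $\apt{\ctime}$ to $\curr$. By construction $\apt{\curr[\efftime]} = \curr$, and since Assumption~\ref{asm:step} forces $\curr[\efftime] \to \infty$, every accumulation point of $\curr$ is an accumulation point of $\apt{\ctime}$ and hence lies in $\limset(\state)$. Combined with precompactness of $\curr$, this shows that $\curr$ converges to the \ac{ICT} set $\limset(\state)$ \as

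I do not anticipate a genuine obstacle here, since Theorem~\ref{thm:APT} does the heavy lifting; the only minor subtlety is confirming that the geodesic arcs of $\apt{\ctime}$ do not escape a compact set — handled by the Riemannian--Manhattan estimate above combined with geodesic completeness — and that the discrete-time accumulation set coincides with the continuous-time limit set, which is immediate from $\apt{\curr[\efftime]} = \curr$ and $\curr[\efftime] \to \infty$.
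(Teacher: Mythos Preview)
Your proposal is correct and follows essentially the same approach as the paper, which simply states that the corollary is obtained ``by invoking \cref{thm:ICT,thm:APT} in tandem'' without further elaboration. You have in fact filled in more detail than the paper provides---in particular the precompactness of the full interpolation $\apt{\ctime}$ (not just of $\curr$) and the transfer from $\limset(\state)$ back to the discrete iterates---both of which the paper leaves implicit.
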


An important consequence of \cref{cor:ICT} is that the analysis of the stochastic, discrete-time system \eqref{eq:RRM} boils down to that of the \emph{deterministic}, \emph{continuous-time} system \eqref{eq:RMD}.
In this way, \cref{thm:APT} allows us to employ the same high-level strategies as the classic literature on stochastic approximation:
For example, if $\vecfield$ admits a potential or is geodesically (strictly) monotone, it is easy to verify that the only \ac{ICT} sets of \eqref{eq:RMD} are the roots of $\vecfield$ \citep{sternberg1999lectures}, so we readily recover the series of asymptotic convergence results mentioned in the introduction.
In \cref{sec:applications}, we present a wide range of problems whose \ac{ICT} sets coincide with the solutions of \eqref{eq:root}, and we further illustrate how \cref{thm:APT} captures a series of Riemannian stochastic approximation algorithms \textendash\ old and new \textendash\ in a unified fashion.

The proof of \cref{thm:APT} is fairly arduous, so we defer it to the end of this section;
instead, we proceed to discuss here in more detail the theorem's precompactness and conjugacy requirements (\cref{asm:compact,asm:conjugate} respectively).
First, as mentioned above, \cref{asm:compact} is common in the stochastic approximation literature but, in general, it is impractical to verify directly from the primitives of the problem at hand \textendash\ that is, the ambient manifold $\points$ and the defining vector field $\vecfield$.
Likewise, if the cut loci of different points on $\points$ happen to have a complicated topological structure, verifying \cref{asm:conjugate} could also be impractical.
To account for all this, we provide below a set of structural conditions on $\points$ and $\vecfield$ which guarantee that \cref{asm:compact,asm:conjugate} both hold \acl{wp1}:

\begin{proposition}
\label{prop:bounded}
Suppose that the following hypotheses hold:
\begin{enumerate}
[\upshape(H1)]
\item
\label[hypothesis]{hyp:Hadamard}
$\points$ is a Hadamard manifold with the Heine-Borel property.%
\footnote{A Hadamard manifold is a simply connected Riemannian manifold with non-positive sectional curvatures, and the Heine-Borel property simply posits that every closed bounded subset thereof is compact.}
\item
\label[hypothesis]{hyp:WC}
$\vecfield$ satisfies the weak coercivity condition
\begin{equation}
\label{eq:WC}
\tag{WC}
\inner*{\vecfield(\point)}{ \nabla \dist^{2}(\base,\point)}_{\point}
	\leq 0
\end{equation}
for some base point $\base\in\points$ and for all $\point\in\points \setminus \ball_{\coer}(\base)$ outside a closed geodesic ball $\ball_{\coer}(\base) \defeq \setdef{\point\in\points}{\dist(\base,\point) \leq \coer}$ of radius $\coer>0$ and centered at $\base$.
\end{enumerate}
Then, \acl{wp1}, \cref{asm:compact,asm:conjugate} hold as stated \textendash\ with \cref{asm:conjugate} only requiring \ref{hyp:Hadamard}.
\end{proposition}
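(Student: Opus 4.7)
The plan is to verify the two required assumptions separately, since only (H1) is needed for the conjugacy condition while the precompactness claim genuinely requires the weak coercivity provided by (H2).

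First, for \cref{asm:conjugate} under \ref{hyp:Hadamard} alone: I would invoke the Cartan--Hadamard theorem, which states that on a simply connected manifold of non-positive sectional curvature the exponential map $\exp_{\point} \from \tspace \to \points$ is a global diffeomorphism at every $\point \in \points$. By definition, this is exactly the statement that no two distinct points on $\points$ are conjugate along any geodesic. Hence, both $\conj_{\Pflowmap}(\ctime,\horizon)$ and $\conj_{\flowcurve}(\ctime,\horizon)$ are empty under \ref{hyp:Hadamard}, so $\conj(\ctime,\horizon) = \emptyset$ is trivially nowhere dense in $[0,\horizon]$ for every $\ctime > 0$.

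For \cref{asm:compact}, I would take $\lyap(\point) \defeq \dist^{2}(\base,\point)$ as a stochastic Lyapunov function. Because the cut locus of every point on a Hadamard manifold is empty, $\lyap$ is globally smooth with $\nabla \lyap(\point) = -2 \exp_{\point}^{-1}(\base)$ and $\norm{\nabla \lyap(\point)}_{\point}^{2} = 4 \lyap(\point)$. By Rauch comparison against the space form of constant curvature $K_{\textup{low}}$, the Hessian of $\lyap$ admits the bound $\Hess \lyap(\point) \preceq 2 h(\dist(\base,\point)) \cdot \id$, where $h(r) = \sqrt{-K_{\textup{low}}}\, r \coth(\sqrt{-K_{\textup{low}}}\, r)$ grows at most linearly in $r$. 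A second-order Taylor expansion of $\lyap$ along the geodesic from $\curr$ to $\next$ then yields the deterministic bound
\begin{equation*}
\lyap(\next) \leq \lyap(\curr) + \curr[\step] \inner{\nabla \lyap(\curr)}{\curr[\signal]}_{\curr} + C_{\run}\, \curr[\step]^{2} \norm{\curr[\signal]}_{\curr}^{2},
\end{equation*}
with a remainder coefficient $C_{\run}$ of order $1 + \sqrt{\lyap(\curr)}$.

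Taking conditional expectations with respect to $\curr[\filter]$, I would then argue: (i) by \ref{hyp:WC}, the drift $\inner{\nabla \lyap(\curr)}{\vecfield(\curr)}_{\curr}$ is non-positive whenever $\curr \notin \ball_{\coer}(\base)$ and is bounded by $2\coer\vbound$ inside the ball; (ii) the systematic bias contributes at most $2\sqrt{\lyap(\curr)}\, \curr[\bbound]$; (iii) the quadratic remainder is controlled in $L^{1}$ using the global bound $\vbound$ on $\vecfield$ together with the moment bounds of \cref{asm:errorbounds}. Rearranging yields an almost-supermartingale inequality of the form
\begin{equation*}
\exof{\lyap(\next) \given \curr[\filter]} \leq (1 + \beta_{\run}) \lyap(\curr) + \xi_{\run} - \zeta_{\run},
\end{equation*}
at which point the Robbins--Siegmund convergence theorem would deliver $\lyap(\curr) \to \lyap_{\infty} < \infty$ \as. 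Boundedness of $\dist(\base,\curr)$ combined with the Heine--Borel property then gives almost-sure precompactness of $\{\curr\}$.

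The main obstacle is that the Hessian of $\lyap$ is not uniformly bounded: the remainder coefficient $C_{\run}$ grows like $\sqrt{\lyap(\curr)}$, so the term $C_{\run}\, \curr[\step]^{2} \norm{\curr[\signal]}_{\curr}^{2}$ is not immediately in the form $\beta_{\run}\lyap(\curr)$ with $\beta_{\run}$ summable. I would circumvent this either by (i) a localization argument, running the supermartingale bound on the stopped sequence $\curr \wedge \tau_{M}$ with $\tau_{M} = \inf \{\run : \lyap(\curr) > M\}$ and using weak coercivity to show $\probof{\tau_{M} < \infty} \to 0$ as $M \to \infty$, or by (ii) substituting the modified Lyapunov $\tilde\lyap(\point) \defeq \log(1 + \dist^{2}(\base,\point))$, whose Hessian \emph{is} globally bounded because the $1/(1+\lyap)$ factor dominates the linear-in-distance Hessian growth, and whose gradient is collinear with $\nabla \lyap$ so that \ref{hyp:WC} transfers verbatim. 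Either route produces Robbins--Siegmund coefficients that are genuinely summable, closing the argument.
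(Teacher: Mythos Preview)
Your proposal is correct and follows the same high-level strategy as the paper: dispose of \cref{asm:conjugate} via Cartan--Hadamard, then establish precompactness by replacing $\dist^{2}(\base,\cdot)$ with a modified Lyapunov function whose gradient and Hessian are \emph{globally} bounded, and finish with a supermartingale convergence argument. The only substantive difference is the specific modification chosen. The paper composes the radial function with a custom $C^{\infty}$ scalar $f$ satisfying $f\equiv 0$ on $[0,\coer]$, $0\le f'\le C_{1}$, $f''\le C_{2}$, and $f(r)=\Omega(r)$; this makes the drift $\langle\nabla\lyap,\vecfield\rangle$ globally non-positive (no case split inside versus outside $\ball_{\coer}(\base)$) and makes $\|\nabla\lyap\|$ uniformly bounded by $C_{1}$, so the bias term is handled without any $\beta_{\run}\lyap(\curr)$ multiplier and Doob's theorem applies directly. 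Your option~(ii), $\tilde\lyap=\log(1+\dist^{2})$, achieves the same bounded-gradient and bounded-Hessian properties by a different mechanism and then invokes Robbins--Siegmund; the price is the small case analysis inside $\ball_{\coer}(\base)$ that you already flagged. Your option~(i) via localization is workable in principle but is both more delicate and notationally garbled as written (you mean $\state_{\run\wedge\tau_{M}}$, not $\curr\wedge\tau_{M}$); option~(ii) is the cleaner route and the one that matches the paper.
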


\begin{corollary}
\label{cor:ICT-expl}
Suppose that \cref{asm:step,asm:errorbounds} and \cref{hyp:Hadamard,hyp:WC} hold.
Then $\curr$ converges to an \ac{ICT} set of \eqref{eq:RMD} \acs{wp1}.
\end{corollary}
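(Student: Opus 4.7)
The plan is to deduce this corollary immediately from the two results that precede it, namely \cref{prop:bounded} and \cref{cor:ICT}. The proof therefore amounts to verifying that the abstract assumptions required by \cref{cor:ICT} all hold under the concrete geometric hypotheses \cref{hyp:Hadamard} and \cref{hyp:WC}, after which the conclusion follows by direct invocation.

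More precisely, \cref{asm:step,asm:errorbounds} are imposed as hypotheses of the corollary, so there is nothing to check for them. For \cref{asm:compact,asm:conjugate}, I would appeal to \cref{prop:bounded}, which states that these two assumptions hold almost surely whenever \cref{hyp:Hadamard,hyp:WC} are in force (with \cref{asm:conjugate} requiring only \cref{hyp:Hadamard}, via the fact that Cartan--Hadamard rules out conjugate points on nonpositively curved simply connected manifolds). Consequently, on an event of probability one, \crefrange{asm:step}{asm:conjugate} all hold simultaneously for the sequence $\curr$ generated by \eqref{eq:RRM}.

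Having reduced to the hypotheses of \cref{cor:ICT}, I would then apply that corollary pathwise on this full-probability event to conclude that $\curr$ converges to an \ac{ICT} set of \eqref{eq:RMD} almost surely. There is no genuine obstacle at this stage: the corollary is a clean synthesis of \cref{prop:bounded} (which supplies the verifiable geometric replacements for the abstract assumptions) and \cref{cor:ICT} (which is itself a consequence of the main stochastic approximation result, \cref{thm:APT}, combined with the limit set theorem, \cref{thm:ICT}). All of the substantive work sits inside those results, so what remains is simply to track the probability-one event on which the verification of \cref{asm:compact,asm:conjugate} via \cref{prop:bounded} is valid and to restrict attention to it.
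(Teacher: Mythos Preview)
Your proposal is correct and matches the paper's approach exactly: the paper states \cref{cor:ICT-expl} without an explicit proof, treating it as an immediate consequence of \cref{prop:bounded} (which supplies \cref{asm:compact,asm:conjugate} under \cref{hyp:Hadamard,hyp:WC}) together with \cref{cor:ICT}.
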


To streamline our discussion, we postpone the proof of \cref{prop:bounded} to \cref{app:stability}.
For now, we simply note that the Hadamard requirement \ref{hyp:Hadamard} is fairly common in applications to Riemannian optimization and includes Stiefel and Grassmann manifolds (as homogeneous spaces), standard matrix manifolds, hyperbolic spaces, etc. \cite{BriHae99,HS20}.

In this regard, the most delicate requirement in \cref{prop:bounded} is \cref{hyp:WC}.
This hypothesis may be viewed as a Riemannian relaxation of the coercivity condition $\lim_{\point\to\infty} \inner{\vecfield(\point)}{\point}/\norm{\point}_{2} = -\infty$, which posits that the ``inward-pointing'' component of $\vecfield$ grows unbounded at infinity, a property which is frequently used to ensure the stability of Euclidean iterative algorithms \citep{Phe93,FP03}.
In our Riemannian setting, the role of the radial field is played by the gradient of the squared distance function $\nabla\dist^{2}(\base,\point)$, which is itself equal to $-2\log_{\point}(\base)$ if $\base$ lies in the injectivity radius of $\point$ (and hence for all $\point\in\points$ if $\points$ is Hadamard).
In addition, it is important to bear in mind that \eqref{eq:WC} \emph{does not} impose any growth requirements on the radial component of $\vecfield$;
it only requires that $\vecfield$ does not have a consistent outward-pointing component that could lead the process to diverge, so it is significantly lighter in that respect (hence the adjective ``weak'').

\subsection{Proof of \cref{thm:APT}}
\label{sec:proof}

Because the proof of \cref{thm:APT} and the geometric scaffolding required are quite delicate, we begin with a high-level outline outlining the main difficulties and technical challenges involved.
In brief, the basic obstacle that we have to overcome is as follows:
\begin{itemize}
\item
On the one hand, we need a coordinate system to compare and compute distances between different \emph{points} on $\points$.
This can be done efficiently in normal coordinates \citep{Lee97}.
\item
On the other hand, we also need to compare \emph{vectors} living on different tangent spaces.
In general, this comparison is very difficult to carry out in a normal coordinate frame, but it is much easier in the \acdef{PFS} that we describe in detail in \cref{app:pcoord}.
\end{itemize}

Intuitively, the normal coordinate system is where \emph{distances} behave as if the ambient space were Euclidean, and the \acl{PFS} is where \emph{vectors} behave as in the Euclidean setting.
Unfortunately however, the only regime where these two systems can coexist is when $\points$ is \emph{flat}, \ie the problem is ``essentially Euclidean'' to begin with.
To circumvent this obstacle, we take the following technical approach:

\begin{enumerate}
\item Based on the notion of \emph{Fermi coordinates} \citep{manasse1963fermi} \textendash\ which can be intuitively understood as ``normal coordinates along a curve'', \cf \cref{fig:Fermi} \textendash\ we begin by constructing an \emph{extended Fermi coordinate frame} that allows us to focus on a neighborhood of $\apt{\ctime}$ containing all the information we need.
[This is a challenging, but otherwise mostly technical, step that does not affect the big picture.]

\item
Using the extended Fermi coordinates constructed above, we can reduce the task of comparing the distance between two Riemannian curves to comparing several \emph{Euclidean}, albeit individually intractable, vector fields.
This step incurs an error term that is not present in the analysis of \emph{Euclidean} stochastic approximation schemes, and which is difficult to control in regions of high sectional curvature,
\cf \cref{eq:hold,eq:hold2}.

\item
To obtain expressions of vector fields that are more amenable to computation, we will switch from the extended Fermi coordinates to the \acl{PFS} and bound the difference between the two.
This step invariably introduces an additional error relative to the Euclidean analysis, \cf \cref{eq:f_to_p_error_vec,eq:f_to_p_error_pflow}.

\item
Serendipitously, these additional error terms can be managed without any further assumptions, and a series of arguments in the spirit of \citet{BH96} concludes our proof.
\end{enumerate}


\begin{figure}[tbp]
\centering
\includegraphics[width=0.9\textwidth]{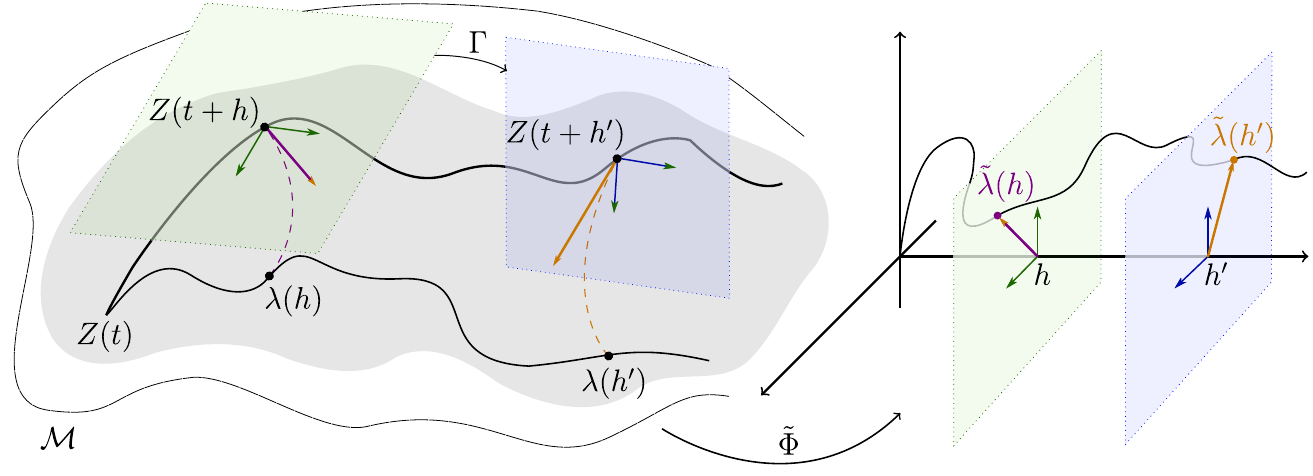}
\caption{Illustration of an extended Fermi coordinate frame.
The moving observer $\apt{\ctime+h}$ measures a curve $\Pflowmap(h)$ using time-indexed geodesics
and an ``inertial frame system'', \ie frames obtained via parallel transport from $\apt{\ctime}$.
For each $h$, $\normal{\Pflowmap}(h)\in \R^{\vdim}$ is the normal coordinate of $\Pflowmap(h)\in\points$.
The space-time map $\normal{\flowmap} \from \R_{+}\times\points\to\R_{+}\times\R^{\vdim}$ is locally defined on a neighborhood containing $\apt{\ctime+h}$ and $\Pflowmap(h)$.}
\label{fig:Fermi}
\vspace{-1ex}
\end{figure}


To formalize all this, we will need the following notions and results from Riemannian geometry:
\begin{itemize}
\item 
The concept of a \acli{PFS}.

\item 
A technical lemma by \citet{fujita1982onsager} and \citet{takahashi1981probability} which bounds the distortion of velocities measured by a moving observer on a manifold relative to flat spaces.

\item 
A comparison lemma to estimate the difference between parallel transport and the pushforward of the Riemannian exponential map.
\end{itemize} 
These elements are quite technical and involved, so we proceed directly to the proof of \cref{thm:APT} and we relegate all precise statements regarding the above to \cref{app:prelim}.

\para{Step 1: Discrete-to-continuous time comparisons}

Following \citet{Ben99}, consider the ``continuous-to-discrete'' counter
\begin{equation}
\label{eq:tinv}
\tinv(\ctime)
	= \sup\setdef{\run\geq\start}{\ctime\geq\curr[\efftime]}
\end{equation}
which measures the number of iterations required for the effective time $\curr[\efftime] = \sum_{\runalt=\start}^{\run-1} \iter[\step]$ to reach a given timestamp $\ctime \geq 0$.
Moroever, given an arbitrary sequence $\curr[A]$, we will denote its piecewise-constant interpolation as
\begin{equation}
\label{eq:piecewise-const}
\bar A(\ctime)
	= \curr[A]
	\quad
	\text{for all $\ctime \in [\curr[\efftime],\next[\efftime])$, $\run\geq\start$.}
\end{equation}
Using this notation, we may express the geodesic interpolation $\apt{\ctime}$ of $\curr$ in differential form as
\begin{align}
\label{eq:apt_shift}
\dot{\state}(t+h)
	&= \ptrans
		{\bar\state(\ctime+ h)}{\state(\ctime + h)}
		{\vecfield\left(\bar\state(\ctime+ h)\right) +  \bar{\error}(\ctime+ h)}
\end{align}
and we further let
\begin{equation}
\label{eq:bboundstar}
\bar{\step}^{\ast}(\ctime)
	\defeq \sup_{\ctime \leq h \leq \ctime + \horizon} \bar{\step}(h)
	\quad
	\text{and}
	\quad
\bar{\bbound}^{\ast}(\ctime)
	\defeq \sup_{\ctime \leq h \leq \ctime + \horizon} \bar{\bbound}(h)
	\quad
	\text{for all $\ctime,h\geq0$}.
\end{equation}
By the Stolz-Cesàro theorem, \cref{asm:step,asm:errorbounds} readily imply that $\lim_{\run\to\infty} \curr[\bbound] = 0$, so we also have $\lim_{\ctime\to\infty} \bar{\bbound}^{\ast}(\ctime) = 0$;
we will use this fact freely in the sequel.

As a last comparison step, we will also need a noise stability criterion.
To that end, let $\{\bvec_{\coord}(\run)\}_{\coord=1}^d$ be an arbitrary sequence of orthonormal bases for $\tspace[\curr]$, and let $\noise_{\run}^{\pcoord}$ be the (Euclidean) noise vector composed of components of the noise $\curr[\noise]$ in the basis $\{\bvec_{\coord}(\run)\}_{\coord=1\dotsc\vdim}$, viz.
\begin{equation}
\noise_{\coord,\run}^{\pcoord}
	\defeq \inner{\curr[\noise]}{\bvec_{\coord}(\run)}_{\curr}.
\end{equation}
It is then easy to see that $\exof{\noise_{\run}^{\pcoord} \vert \curr[\filter]} = 0$, and, moreover
\begin{equation}
\exof*{ \norm*{ \noise_{\run}^{\pcoord} }_{2}^{2} \given \curr[\filter] }
	=  \exof*{\norm{\curr[\noise]}^{2}_{\curr} \given \curr[\filter]}
	\leq \curr[\sdev^{2}]
\end{equation}
by \cref{asm:errorbounds}.
Then, letting
\begin{equation}
\label{eq:delta}
\Delta(\ctime;\horizon)
	\defeq \sup_{0\leq h\leq \horizon} \norm*{\int_{\ctime}^{\ctime+h} \bar\noise^{\pcoord}(s) \dd s}_{2},
\end{equation}
a classical argument by \citet[][\cf Eq.~(13) and onwards]{Ben99} readily gives
\begin{equation}
\label{eq:noise_stability}
\lim_{t\to \infty}\Delta(\ctime;\horizon)
	= 0
	\quad
	\text{for all $\horizon>0$ \as}.
\end{equation}

\para{Step 2: Preliminary error bounds}

We first note that, by \cref{asm:compact},
~$\sup_{\ctime} \radial\parens*{\apt{\ctime} } < \infty$ where $\radial(\cdot)\defeq \dist(\cdot,p)$ is the radial function defined in \eqref{eq:WC}.
We claim that \cref{asm:compact} also implies the boundedness of the Picard flow.
To see this, recall that the parallel transport is an isometry, so
\begin{align}
\norm*{\dot{\Pflowmap}(h)}_{\Pflowmap(h)}
	 = \norm*{\ptrans{\apt{\ctime + h}}{\Pflowmap(h)}{\vecfield(\apt{\ctime+h})}}_{\Pflowmap(h)}
	 = \norm*{\vecfield(\apt{\ctime+h})}_{\apt{\ctime+h}},
\end{align}
and hence
\begin{equation}
\sup_{0
	\leq h \leq \horizon}\dist\parens*{\Pflowmap(0), \Pflowmap(h)} \leq \horizon \cdot \sup_{0\leq h \leq \horizon}\norm*{\dot{\Pflowmap}(h)}_{\Pflowmap(h)}
	< \infty,
\end{equation}
which implies $\sup_{0\leq h \leq \horizon}\radial(\Pflowmap(h))<\infty$.
On the other hand, the boundedness for the flow follows readily from the fact that $\vecfield$ and $\dist$ are both $\lips$-Lipschitz,%
\footnote{Due to the smoothness of the flow, the function $\radial\parens*{\flowcurve(h)}$ is always differentiable in $h$ in the metric space sense \citep{ambrosio2005gradient}, even though $\flowcurve(h)$ might reach the cut locus of $\base$.}
so
\begin{align}
{\frac{\dd}{\dd h}\radial\parens*{\flowcurve(h)}}
	&\leq \norm*{\dot{\flowcurve}(h)}_{\flowcurve(h)}
		=  \norm*{\vecfield(\flowcurve(h))}_{\flowcurve(h)}
	\notag\\
	&\leq \norm*{\vecfield(\flowcurve(h))-\ptrans{ \base }{\flowcurve(h)}{\vecfield(\base)}}_{\flowcurve(h)} + \norm*{\ptrans{\base}{\flowcurve(h)}{\vecfield(\base)}}_{\flowcurve(h)}
	\notag\\
	&\leq \lips \radial\parens*{\flowcurve(h)} + \norm*{{\vecfield(\base)}}_{\base}.
\end{align}
An application of Grönwall's inequality then gives $\sup_{0\leq h \leq \horizon} \radial\parens*{\flowcurve(h)} < \infty$.
Therefore, all computations in the sequel can be restricted to a compact set, and
we may assume without loss of generality that $\bar{\step}^{\ast}(\ctime) < 1$ for all $\ctime\geq0$ and,
in addition, there exists some $\rbound \equiv \rbound({\horizon,\lips,\gbound})$ such that
\begin{align}
\label{eq:rbound}
\max\braces*{
		\dist\parens*{\apt{\ctime + h} , \flowcurve(h)}, \dist\parens*{\apt{\ctime + h} , \Pflowmap(h) }
		}
	\leq \rbound
	\quad
	\text{for all $h\in[0,T]$}.
\end{align}

\para{Step 3: Constructing the extended Fermi coordinates}

Let $\fnbhd$ be the neighborhood defined in \cref{app:fcoord} restricted to $[\ctime, \ctime+ \horizon]$, \ie $\fnbhd = \union_{h=0}^\horizon \nbhd_h$ with $\nbhd_h$ given by \eqref{eq:normal_nbhd}.
Clearly, $\fnbhd$ contains $\setdef{\apt{\ctime+h}}{h\in [0,\horizon]}$;
moreover, by construction, $\fdiffeo$ carries a system of orthonormal frames $\braces*{\bvec_\coord(h)}_{\coord=1}^\vdim$, one for each $\apt{\ctime+h}$.
In what follows, all quantities will be expressed in these frames unless explicitly mentioned otherwise.

Now, fix some $h_{0} \in [0,\horizon]$ and let $\curve_\flowcurve$ and $\curve_\Pflowmap$ be two minimizing geodesics such that $\curve_\flowcurve(0) = \curve_\Pflowmap(0) = \apt{\ctime+h_{0}}$, $\curve_\flowcurve(1) = \flowcurve(h_{0})$ and $\curve_\Pflowmap(1) = \Pflowmap(h_{0})$.
Our first goal will be to extend $\fnbhd$ to an open set of $\points$ that contains the geodesics $\curve_\flowcurve$ and $\curve_\Pflowmap$, while retaining the exponential mapping as a local diffeomorphism.
This will serve a dual purpose:
\begin{enumerate}
\item It enables us, for a fixed $h_{0}$, to consider the \aclp{PFS} at $\apt{\ctime+h_{0}}$ and $\flowcurve(h_{0})$ \textendash\ or $\apt{\ctime+h_{0}}$ and $\Pflowmap(h_{0})$, depending on the context \textendash\ so that we can easily compare the vector fields at these points;
see \cref{app:pcoord}.
\item We want to apply \cref{lem:fcoord} to the curves $\flowcurve(\cdot)$ and $\Pflowmap$;
however, for $\dfflow[h_{0}]$ and $\dfpflow[h_{0}]$ to make sense, $\fnbhd$ must contain both curves for at least an open time interval that includes $h_{0}$.
\end{enumerate}
This is where \cref{asm:conjugate} comes into play:
Since the conjugate points arise by definition when the exponential map ceases to be local diffeomorphisms \citep{Lee97}, it is reasonable to expect that, away from the time points where $\apt{\ctime+h}$ is conjugate to $\flowcurve({h})$ or $\Pflowmap({h})$, it is always possible to extend $\fnbhd$ to include the geodesics connecting $\apt{\ctime+h}$ to $\flowcurve({h})$ and $\Pflowmap({h})$.
In this regard, \cref{asm:conjugate} simply posits that there cannot be ``too many'' such conjugate points.

To formalize this, fix some $h_{0} \notin \conj(\ctime,\horizon)$ where $\conj(\ctime,\horizon)$ is defined as in \eqref{asm:conjugate}, and assume also that $t+ h_{0} \neq \curr[\efftime]$ for all $\run$ \textendash\ \ie $t+ h_{0}$ is not a ``kink point'' of \eqref{eq:interpolation}.
Since the exponential mapping is a local diffeomorphism away from conjugate points \citep{Lee97}, it follows that 
\(
\exp_{\apt{\ctime+h_{0}}} \from \tspace[\apt{\ctime+h_{0}}] \to \points
\)
is a local diffeomorphism at $\fpflow[h_{0}]$ and $\fflow[h_{0}]$, where $\fpflow[h_{0}]$ and $\fflow[h_{0}]$ are the normal coordinates of $\Pflowmap(h_{0})$ and $\flowcurve(h_{0})$ with center $\apt{\ctime+h_{0}}$ respectively.
By the continuity of the Picard flow and the frame system $\{\bvec_\coord(\cdot)\}_{\coord=1}^d$, there exists an open interval $(h_{\text{init}}, h_{\text{fin}})$ containing $h_{0}$ such that, for all $h \in (h_{\text{init}}, h_{\text{fin}})$,
\(
\exp_{\apt{\ctime+h}} \from \tspace[\apt{\ctime+h}] \to \points
\)
is a local diffeomorphism at $\fpflow$ and $\fflow$, where $\fpflow$ and $\fflow$ are the normal coordinates of $\Pflowmap({h})$ and $\flowcurve({h})$ with center $\apt{\ctime+h}$ respectively.

On that account, let $\curve^h_\flowcurve$ be a family of minimizing geodesics such that $\curve^h_\flowcurve(0) = \apt{\ctime+h}$ and $\curve^h_\flowcurve(1) = \flowcurve(h)$, and define $\curve^h_\Pflowmap$ similarly.
Since both $\curve^h_\flowcurve$ and $\curve^h_\Pflowmap$ are minimizing geodesics and $\flowcurve(h)$ and $\Pflowmap(h)$ are not conjugate to $\apt{\ctime+h}$, \citep[Theorem 10.15]{Lee97} ensures that no point on $\curve^h_\flowcurve$ or $\curve^h_\Pflowmap$ is conjugate to $\apt{\ctime+{h}}$.
Summing up, we have shown that the exponential mapping is a local diffeomorphism at any point of the set
\begin{equation}
\braces{\curve^h_\flowcurve}_{h\in (h_{\text{init}}, h_{\text{fin}})}
	\cup \braces{\curve^h_\Pflowmap}_{h\in (h_{\text{init}}, h_{\text{fin}})}.
\end{equation}

The final step in our construction is to consider the union of all such $(h_{\text{init}}, h_{\text{fin}})$ for all $h_{0} \notin \conj(\ctime,\horizon)$ and $t+ h_{0} \neq \curr[\efftime]$; we denote the set obtained in this way by $\nconj$.
More precisely, we claim that
\begin{enumerate*}
[\itshape a\upshape)]
\item
$\nconj$ is a dense open subset of $[0, \horizon]$;
and
\item
$\nconj$ can be written as a countable union of disjoint open intervals, \ie $\nconj = \union_{\runalt} (h_{\runalt}, h_{\runalt+1})$.
\end{enumerate*}
Indeed, the first claim follows readily from \cref{asm:conjugate} and the fact that the set $\braces{h_{0}: t+ h_{0} = \curr[\efftime] \text{ for some } \run}$ is countable.
The second claim is due to the compactness of $[0,T]$ and the fact that every nonempty open interval
in $\R$ contains a rational number.

In view of the above, it is possible to extend $\fnbhd$ and $\fdiffeo$ to an open set containing
\(
\braces{\curve^h_\flowcurve}_{h\in (h_{\text{\runalt}}, h_{\text{\runalt+1}})}
	\cup \braces{\curve^h_\Pflowmap}_{h\in (h_{\text{\runalt}}, h_{\text{\runalt+1}})},
\)
which, in turn, obviously contains $ \union_{h\in (h_{\text{\runalt}}, h_{\text{\runalt+1}})} \braces{\flowcurve(h), \Pflowmap(h)}$.
We call such a pair $(\fdiffeo,\fnbhd)$ the \emph{extended Fermi coordinate frame} because it not only contains the central curve $h\to \apt{\ctime +h}$ as in the classical case, but also $\flowcurve(h)$ and $\Pflowmap(h)$ for almost every $h\in[0,\horizon]$.

\para{Step 4: Controlling the distance by decomposition}

From this point forward, we will assume that all computations take place in the extended \acl{FCS} $(\fdiffeo,\fnbhd)$.
By the definition of $\flowcurve(\ctime)$, and given that $\fflow[{h}]$ is the normal coordinate of $\flowcurve({h})$ with center $\apt{\ctime + {h}}$, we have, for all $h\in \nconj$,
\begin{align}
\label{eq:decomp}
\dist(\apt{\ctime + h} , \flow[h][\apt{\ctime}]) &= \dist(\apt{\ctime + h} , \flowcurve(h))
	= \norm{\fflow[{h}]}_{2}
	\leq \norm{\fflow[{h}] - \fpflow}_{2}  + \norm{ \fpflow}_{2}.
\end{align}
Since $\nconj$ is a dense open subset of $[0,\horizon]$ and since it is a countable union of open intervals, it follows that $\fflow$ and $\fpflow$ are differentiable except on a set of measure zero.
We may thus write
\begin{align}
\norm{\fflow[{h}] - \fpflow}_{2}
	=  \norm*{\int_{0}^{h} \bracks[\big]{\dfflow[u] - \dfpflow[u]} \dd u}_{2}.
\end{align}
Moreover, by \cref{lem:fcoord} and the definition of \eqref{eq:RMD} and \eqref{eq:Pflow}, we have
\begin{subequations}
\begin{align}
\dfflowc
	&= \dot{\flowcurve}_{\coord}(u) - \dot{\state}_{\coord} (\ctime+u) + \bigoh\parens[\big]{\norm{\fflow[u]}_{2}^{2}}
= \fvecfield_{\coord}(u, \fflow[u]) - \dot{\state}_{\coord} (\ctime+u) + \bigoh\parens[\big]{\norm{\fflow[u]}_{2}^{2}},
	\\
\dfpflowc
	& = \dot{\Pflowmap}_{\coord}(u) - \dot{\state}_{\coord} (\ctime+u) + \bigoh\parens[\big]{\norm{\fpflow[u]}_{2}^{2}}
	= \cptransc - \dot{\state}_{\coord} (\ctime+u) + \bigoh\parens[\big]{\norm{\fpflow[u]}_{2}^{2}},
\end{align}
\end{subequations}
where
$\dot{\state}_{\coord}$ is defined in \cref{lem:fcoord}
and
$\fvecfield_{\coord}(u, \fflow[u])$ and $\cptransc$ are, respectively, the $\coord$-th components of the vectors $\vecfield\big(\flowcurve(u)\big)$ and $\ptrans{\apt{\ctime + u}}{\Pflowmap(u)}{\vecfield(\apt{\ctime+u})}$ in the frame induced by the normal coordinate with center $\apt{\ctime+u}$ and frame $\braces*{\bvec_\coord(u)}_{\coord=1}^\vdim$.
Now, denoting by $\fvecfield(u, \fflow[u])$ the (Euclidean) vector with components
$\fvecfield_{\coord}(u, \fflow[u])$ and, by $\cptrans$ the vector with components $\cptransc$, we may write
\begin{align}
\label{eq:hold}
\norm{\fflow[{h}] - \fpflow}_{2} \leq  \Bigg\Vert\int_{0}^{h} \left(\fvecfield(u, \fflow[u]) - \cptrans \right)\dd u\Bigg\Vert_{2}  + \int_{0}^{h} R_1(u) \dd u 
\end{align}where the remainder term $R_1(u)$ is of order $\bigoh\left( \norm{\fflow[u]}_{2}^{2} + \norm{\fpflow[u]}_{2}^{2} \right)$.
Noting that, by \eqref{eq:rbound},
\begin{subequations}
\begin{align}
\norm{\fflow[u]}_{2}
	&= \dist\left( \apt{\ctime+u}, \flow[u][\apt{t}] \right) \leq \rbound,
	\\
\norm{\fpflow[u]}_{2}
	&= \dist\left( \apt{\ctime+u}, \Pflowmap(\ctime+u) \right) \leq \rbound,
\end{align}
\end{subequations}
we have $\norm{\fflow[u]}_{2}^{2} +  \norm{\fpflow[u]}_{2} ^{2} \leq \rbound  \left( \norm{\fflow[u]}_{2} + \norm{\fpflow[u]}_{2}  \right)$, we get $R_1(u) = \bigoh_\rbound\left(\norm{\fflow[u]}_{2} + \norm{\fpflow[u]}_{2}  \right) $ where $\bigoh_\rbound(\cdot)$ includes any constants depending on $\rbound$.

In the same vein, denoting by $\dot{\normal{\state}} (\ctime+u)$ the Euclidean vector whose $\coord$-th component is $\dot{\state}_{\coord} (\ctime+u)$, we have
\begin{align}
\label{eq:hold2}
\norm{\fpflow}_{2} \leq  \Bigg\Vert\int_{0}^{h} \left( \cptrans - \dot{\normal{\state}} (\ctime+u)  \right)\dd u \Bigg\Vert_{2} + \int_{0}^{h} R_{2}(u) \dd u 
\end{align}
with $R_{2}(u) = \bigoh\left( \norm{\fpflow[u]}_{2}^{2} \right) = \bigoh_\rbound\left(\norm{\fpflow[u]}_{2}  \right) $.

\para{Step 5: From Fermi to parallel coordinates}

So far, we have reduced the proof to comparing the vectors in \cref{eq:hold,eq:hold2}.
However, these vectors are not amenable to further computation as they are expressed in the frames induced by the normal coordinates, and these frames may not even be orthonormal.

On the other hand, when expressed in the \acl{PFS} (see \cref{app:pcoord}) with a common base point $\apt{\ctime+u}$, the vectors $\vecfield\big(\flowcurve(u)\big) $ and $\ptrans{\apt{\ctime + u}}{\Pflowmap(u)}{\vecfield(\apt{\ctime+u})}$ possess some favorable properties.
To see this, parallel transport the frame $\braces*{\bvec_\coord(u)}_{\coord=1}^\vdim$ along the geodesic from $\apt{\ctime+u}$ to form an orthonormal frame $\braces*{\bvec'_\coord(u)}_{\coord=1}^\vdim$ of $\tspace[\Pflowmap(u)]$, and consider the Euclidean vector $\cptransp$  whose components are defined as 
\begin{align}
\label{eq:dotfpflow-compo}
\cptranscp
	&\defeq \inner{ \ptrans{\apt{\ctime + u}}{\Pflowmap(u)}{\vecfield(\apt{\ctime+u})} }{ \bvec'_\coord(u)}_{\Pflowmap(u)}
	= \inner{ \vecfield(\apt{\ctime+u}) }{ \bvec_\coord(u)}_{\apt{\ctime+u}}.
\end{align}
Similarly, parallel transport the frame $\braces*{\bvec_\coord(u)}_{\coord=1}^\vdim$ along the geodesic from $\apt{\ctime+u}$  to form an orthonormal frame $\braces*{\bvec''_\coord(u)}_{\coord=1}^\vdim$ of $\tspace[\flowcurve(u)]$, and let
\(
\pvecfield_{\coord}\left(u, \fflow[u]\right)
	\defeq \inner{\vecfield\big(\flowcurve(u)\big) }{ \bvec''_\coord(u)}_{\flowcurve(u)}.
\)
Since the parallel transport is an isometry, we get
\begin{align}
\pvecfield_{\coord}\left(u, \fflow[u]\right) - \cptranscp
	&=  \inner*{\vecfield\big(\flowcurve(u)\big) }{ \bvec''_\coord(u)}_{\flowcurve(u)}
		- \inner*{ \ptrans{\apt{\ctime + u}}{\Pflowmap(u)}{\vecfield(\apt{\ctime+u})} }{ \bvec'_\coord(u)}_{\Pflowmap(u)}
	\notag\\
	&= \inner*{\ptrans{\flowcurve(u)}{\apt{\ctime+u}}{\vecfield\big(\flowcurve(u)\big)} }{ {\bvec_\coord(u)}}_{\apt{\ctime+u}}
		- \inner*{{\vecfield(\apt{\ctime+u})} }{ \bvec_\coord(u)}_{\apt{\ctime+u}}
	\notag\\
	&= \inner*{\ptrans{\flowcurve(u)}{\apt{\ctime+u}}{\vecfield\big(\flowcurve(u)\big)} - \vecfield(\apt{\ctime+u})}{ {\bvec_\coord(u)}}_{\apt{\ctime+u}},
\end{align}
and hence
\begin{align}
\norm*{\pvecfield\left(u, \fflow[u]\right) - \cptransp}_{2}
	&=  \norm{\ptrans{\flowcurve(u)}{\apt{\ctime+u}}{\vecfield\big(\flowcurve(u)\big)}  - \vecfield(\apt{\ctime+u})}_{\apt{\ctime+u}}
	\notag\\
\label{eq:lip-PFS}
	&\leq \lips \dist(\flowcurve(u), \apt{\ctime+u}) = \lips \norm{\fflow[u] }_{2}
\end{align}
where we have used the fact that $\vecfield$ is $\lips$-Lipschitz and that $\fflow[u]$ are normal coordinates with center $\apt{\ctime+u}$.

We would therefore like to replace $\fvecfield\left(u, \fflow[u]\right) - \cptrans $ with $\pvecfield\left(u, \fflow[u]\right) - \cptransp$ in \eqref{eq:hold}.
To this end, consider the difference in the $\coord$-th component of $\fvecfield\left(u, \fflow[u]\right)$ and $\pvecfield\left(u, \fflow[u]\right)$
\begin{align}
{\pvecfield_{\coord}\left(u, \fflow[u]\right) - \fvecfield_{\coord}\left(u, \fflow[u]\right)}
	= \inner*{\vecfield\big(\flowcurve(u)\big) }{ \bvec''_\coord(u)}_{\flowcurve(u)}
		-\inner*{\vecfield\big(\flowcurve(u)\big) }{ \frac{\del}{\del \point_\coord}\Big\vert_{\flowcurve(u)}  }_{\flowcurve(u)}
\end{align}
where $\frac{\del}{\del \point_i}\Big\vert_{\flowcurve(u)}$ is the $\coord$-th basis in the frame induced by the normal coordinate with center $\apt{\ctime+u}$ and frame $\{\bvec_\coord(u)\}_{\coord=1}^d$.
More specifically, denote by $\vvec$ the vector $\sum_{\coord = 1}^\vdim \fflowc[u] \bvec_\coord(u) \in \tspace[\apt{\ctime+u}]$ and consider the family of geodesics 
\begin{equation}
\curve(s,s')
	\defeq \expof{\apt{t+u}}{s(\vvec + s'\bvec_\coord(u))}
\end{equation}
so
\begin{equation}
\frac{\del}{\del \point_i}\Big\vert_{\flowcurve(u)}
	= \frac{\del}{\del s'} \curve(1,0)
	= \dd\exp_{\apt{\ctime+u}}(\vvec)(\bvec_\coord(u)).
\end{equation}
Invoking Cauchy-Schwartz,
\cref{lem:dexp_vs_pt}, and the fact that $\{\bvec_\coord(u)\}_{\coord=1}^d$ is orthonormal, we get
\begin{align}
\abs*{\pvecfield_{\coord}\left(u, \fflow[u]\right) - \fvecfield_{\coord}\left(u, \fflow[u]\right)}
	&= \abs*{ \inner*{\vecfield\big(\flowcurve(u)\big) }{ \bvec''_\coord(u)- \frac{\del}{\del \point_\coord}\Big\vert_{ \flowcurve(u) }  }_{\flowcurve(u)} }
	\notag\\
	&\leq \norm{\vecfield\big(\flowcurve(u)\big)}_{\flowcurve(u)}
		\cdot \norm*{\bvec''_\coord(u)- \frac{\del}{\del\point_\coord}\Big\vert_{\flowcurve(u)}}_{\flowcurve(u)}
	\notag\\
	&\leq \gbound \cdot K_{\max} \cdot f_{-K_{\max}}( \vvec ) \cdot \norm{ \bvec_{\coord}(u)_{\perp} }_{\apt{\ctime+u}}
	\notag\\
\label{eq:hold3}
	&\leq \gbound \cdot K_{\max} \cdot f_{-K_{\max}}(\vvec)
		= \gbound \cdot K_{\max} \cdot f_{-K_{\max}}(\norm{\fflow[u]}_{2})
\end{align}
where $K_{\max} = \max\braces{\abs{K_\textup{up}},\abs{K_\textup{low}}}$.
Since $(\sinh x)/x-1 \leq \cosh x \leq e^{x}$ for all $x \geq 0$, \eqref{eq:fdef} and \eqref{eq:fKbound} yield
\begin{equation}
\label{eq:hold4}
f_{-K_{\max}}( \norm{\fflow[u]}_{2})
	\leq \frac{1}{K_{\max}} \exp\left(\sqrt{K_{\max}}\norm{\fflow[u]}_{2}\right)
	\leq  \frac{ e^{\rbound\sqrt{K_{\max}}} }{\rbound \cdot K_{\max}} \cdot \norm{\fflow[u]}_{2}
\end{equation}
where the last inequality follows from $\norm{\fflow[u]}_{2} = \dist(\apt{\ctime+u}, \flow[u][\apt{\ctime}]) \leq \rbound$.
Combining \eqref{eq:hold3} and \eqref{eq:hold4}, we thus get
\begin{align}
\abs*{\pvecfield_{\coord}\left(u, \fflow[u]\right) - \fvecfield_{\coord}\left(u, \fflow[u]\right)}
	&\leq \frac{\gbound e^{\rbound\sqrt{K_{\max}}} }{\rbound} \cdot \norm{\fflow[u]}_{2}.
\end{align}
In short, we have shown that
\begin{equation}
\label{eq:f_to_p_error_vec}
\fvecfield\left(u, \fflow[u]\right)  = \pvecfield\left(u, \fflow[u]\right) + R_3(u)
\end{equation}
where $R_3(u) = \bigoh_{\gbound, K_{\max}, \rbound}\left(\norm{\fflow[u]}_{2} \right)$ collects constants that depend on $\gbound$, $K_{\max}$ and $\rbound$.
Exactly the same computation shows that, for some $R_4(u) = \bigoh_{\gbound, K_{\max}, \rbound}\left(\norm{  \fpflow[u]}_{2} \right)$,
\begin{equation}
\label{eq:f_to_p_error_pflow}
\cptrans
	= \cptransp + R_4(u).
\end{equation}

\para{Step 6: Putting everything together}

With all these preliminaries in hand, we are finally in a position to complete our proof.

\begin{proof}[Proof of \cref{thm:APT}]
We will proceed by bounding \eqref{eq:hold} and \eqref{eq:hold2}.
Using \eqref{eq:f_to_p_error_vec}, \eqref{eq:f_to_p_error_pflow}, and \eqref{eq:lip-PFS} in \eqref{eq:hold}, we obtain:
\begin{align}
\nonumber
\norm{\fflow[{h}] - \fpflow}_{2} &\leq  \Bigg\Vert \int_{0}^{h} \left(\fvecfield\left(u, \fflow[u]\right) - \cptrans \right)\dd u\Bigg\Vert_{2}  + \int_{0}^{h} R_1(u) \dd u  \\
\nonumber
&\leq \int_{0}^{h} \norm*{\fvecfield\left(u, \fflow[u]\right) - \cptrans}_{2}\dd u + \int_{0}^{h} R_1(u) \dd u \\
\nonumber
&\leq \int_{0}^{h} \norm*{\pvecfield\left(u, \fflow[u]\right) - \cptransp}_{2}\dd u + \int_{0}^{h} (R_1+R_3+R_4)(u) \dd u \\
\label{eq:hold5}
&\leq \lips \int_{0}^{h} \norm*{ \fflow[u]}_{2}\dd u+ \int_{0}^{h} (R_1+R_3+R_4)(u) \dd u.
\end{align}

We next turn to \eqref{eq:hold2}.
Our first task is to obtain an expression for $\dot{\normal{\state}} (\ctime+u)$, \ie the Euclidean vector whose $\coord$-th component is $\dot{{\state}}_{\coord}(\ctime+u)$.
To this end, fix an iteration count $\run$, and consider all $u$ such that $\ctime + u \in [\efftime_{\run}, \efftime_{\run+1})$ (that is, consider only the interpolation between $\curr$ and $\next$).
We claim that $\dot{{\state}}_{\coord}(\ctime+u)$ is constant throughout all such $u$, and, in particular, $\dot{{\state}}_{\coord}(\ctime+u) = \dot{{\state}}_{\coord}(\efftime_{\run})$.
This readily follows by noticing that
\begin{enumerate}
\item
The curve $\state(\ctime)$ is a geodesic segment when restricted to $[\curr[\efftime], \next[\efftime])$; see \eqref{eq:interpolation}.
\item
The Fermi coordinates along $\state(\cdot)$, when restricted to $\setdef{\state(s)}{ s \in [\curr[\efftime], \next[\efftime])}$, is simply a \acl{PFS};
this follows from the fact that the frame $\{\bvec_\coord(u)\}_{\coord=1}^d$ is obtained from parallel transporting $\{\bvec_\coord(\curr[\efftime])\}_{\coord=1}^d$ along $\apt{\cdot}$ for all $u$ such that $\ctime+u\in [\curr[\efftime], \next[\efftime])$.
\end{enumerate}
In this way, a simple calculation akin to \eqref{eq:comp_in_pcoord} yields, for all such $u$,
\begin{align}
\nonumber
\dot{{\state}}_{\coord}(\ctime+u) &= \inner{\dot{\state}(\ctime+u)}{\bvec_i(u)}_{\apt{\ctime+u}} \\
\nonumber
&= \inner*{ \ptrans{\apt{\ctime+u}}{\apt{\curr[\efftime]}}{\left(\dot{\state}(\ctime+u) \right)}}{\bvec_i(\curr[\efftime])}_{\apt{\curr[\efftime]}} \\
\label{eq:hold6}
&= \inner*{ \vecfield(\apt{\curr[\efftime]}) + \curr[\error] }{\bvec_i(\curr[\efftime])}_{\apt{\curr[\efftime]}}  
= \inner*{ \vecfield(\apt{\curr[\efftime]}) + \curr[\noise] + \curr[\bias] }{\bvec_i(\curr[\efftime])}_{\apt{\curr[\efftime]}} 
\end{align}where we have used \eqref{eq:apt_shift} and the definition of $\curr[\error]$ in the last equality.

Armed with the above, we can obtain a succinct expression for $\dot{\normal{\state}} (\ctime+u)$ as follows.
First, let $\faptbar$ be the normal coordinate of $\bar\state(\ctime+u)$ with center
$\apt{\ctime+u}$ (\ie $(u, \faptbar) = \fdiffeo(u, \bar\state(\ctime+u))$), and define an Euclidean vector $\pvecfield(u, \faptbar)$ by setting its $\coord$-th component to
\begin{equation}
\label{eq:pvec-bar-compo}
\pvecfield_{\coord}\left(u, \faptbar\right) \defeq \inner*{ \vecfield\left(\bar\state (\ctime + u) \right) }{\bvec_i(\tinv(\ctime+u))}_{ \bar\state (\ctime + u) }  
\end{equation}where the mapping $\tinv(\cdot)$ is defined in \eqref{eq:tinv}.
Define also the Euclidean noise and bias vectors $\noise_{\run}^{\pcoord}$ and $\bias_{\run}^{\pcoord}$ by setting their components to
\begin{subequations}
\begin{align}
\noise_{\coord,\run}^{\pcoord}
	&\defeq \inner*{  \curr[\noise] }{ \bvec_i(\tinv(\ctime+u)) }_{ \bar\state (\ctime + u) },
	\\
\bias_{\coord,\run}^{\pcoord}
	&\defeq\inner*{  \curr[\bias] }{  \bvec_i(\tinv(\ctime+u)) }_{ \bar\state (\ctime + u) }.
\end{align}
\end{subequations}
Then \eqref{eq:hold6} states precisely that
\begin{align}
\label{eq:dot-apt-formula}
\dot{\normal{\state}} (\ctime+u) = \pvecfield(u, \faptbar) + \bar{\noise}^{\pcoord}(\ctime+u) + \bar{\bias}^{\pcoord}(\ctime+u).
\end{align}
Substituting \eqref{eq:dot-apt-formula} into \eqref{eq:hold2} and invoking \eqref{eq:f_to_p_error_pflow}, \eqref{eq:bboundstar}, and \eqref{eq:delta}, we obtain 
\begin{align}
\norm{\fpflow}_{2}
	&\leq \norm*{
		\int_{0}^{h} \left( \cptrans - \pvecfield(u, \faptbar) - \bar{\noise}^{\pcoord}(\ctime+u) - \bar{\bias}^{\pcoord}(\ctime+u)  \right)\dd u
		}_{2}
		+ \int_{0}^{h} R_{2}(u) \dd u
	\notag\\
	&\leq \int_{0}^{h} \norm*{\cptransp - \pvecfield(u, \faptbar) }_{2} \dd u
	\notag\\
	&\qquad
		+ \norm*{ \int_{0}^{h} \bar{\noise}^{\pcoord}(\ctime+u) \dd u}_{2} + \norm*{ \int_{0}^{h} \bar{\bias}^{\pcoord}(\ctime+u) \dd u}_{2} + \int_{0}^{h} \bracks{R_{2}(u) + R_4(u)} \dd u
	\notag\\
	&\leq \int_{0}^{h} \norm*{\cptransp - \pvecfield(u, \faptbar) }_{2} \dd u
	\notag\\
	&\qquad
		+ \Delta(\ctime,\horizon) + \bar{\bbound}^{\ast}(\ctime) \cdot h  + \int_{0}^{h} \bracks{R_{2}(u) + R_4(u)} \dd u.
\label{eq:hold7}
\end{align}
To bound the first term in \eqref{eq:hold7}, recall \eqref{eq:dotfpflow-compo} and \eqref{eq:pvec-bar-compo}.
An identical argument leading to \eqref{eq:vec_in_pcoord} shows that
\begin{align}
\norm*{\cptransp - \pvecfield(u,\faptbar) }_{2}
	&= \norm*{\vecfield(\state(\ctime+u)) - \ptrans{\bar\state(\ctime+u)}{\state(\ctime+u)}{\vecfield(\bar\state(\ctime+u))}}_{\state(\ctime+u)}
	\notag\\
\label{eq:hol8}
	&\leq \lips \cdot\dist\left(\bar\state(\ctime+u), \state(\ctime+u) \right).
\end{align}
Since $\apt{\cdot}$ is a (not necessarily minimizing) geodesic on $[\tinv(t+u), \ctime+u]$, \cref{eq:dot-apt-formula,eq:bboundstar} yield
\begin{align}
\dist(\bar\state(\ctime+u),\state(\ctime+u))
	&\leq  \norm*{ \int_{\tinv(t+u)}^{t+u} \dot{\normal{\state}}(\ctimealt) \dd \ctimealt }_{2}
	\notag\\
	&= \norm*{  \int_{\tinv(t+u)}^{t+u}   \pvecfield(\ctimealt, \faptbar[\ctimealt]) + \bar{\noise}^{\pcoord}(\ctimealt) + \bar{\bias}^{\pcoord}(\ctimealt) \dd \ctimealt }_{2}
	\notag\\
	&\leq \norm*{  \int_{\tinv(t+u)}^{t+u}   \pvecfield(\ctimealt, \faptbar[\ctimealt]) \dd \ctimealt}_{2}
	\notag\\
	&\qquad
		+ \norm*{\int_{\tinv(t+u)}^{t+u} \bar{\noise}^{\pcoord}(\ctimealt)\dd \ctimealt}_{2}
		+ \norm*{ \int_{\tinv(t+u)}^{t+u} \bar{\bias}^{\pcoord}(\ctimealt) \dd \ctimealt }_{2}
	\notag\\
	&\leq \left(\gbound + \bar{\bbound}^{\ast}(\ctime) \right)\cdot \left(\int_{\tinv(\ctime+u)}^{\ctime+u} \dd \ctimealt \right)
		+ \norm*{ \int_{\tinv(t+u)}^{t+u} \bar{\noise}^{\pcoord}(\ctimealt)\dd \ctimealt}_{2}
	\notag\\
\label{eq:hold9}
	&\leq \left(\gbound + \bar{\bbound}^{\ast}(\ctime) \right)\bar{\step}^{\ast}(\ctime)
		+ \norm*{ \int_{\tinv(\ctime+u)}^{\ctime+u} \bar{\noise}^{\pcoord}(\ctimealt)\dd \ctimealt}_{2}.
\end{align}
For $\ctime$ large enough, we have $\bar{\step}^{\ast}(t)  < 1$, and hence 
\begin{align}
\norm*{ \int_{\tinv(t+u)}^{t+u} \bar{\noise}^{\pcoord}(\ctimealt)\dd \ctimealt}_{2}
	&\leq \norm*{ \int_{ \ctime-1}^{ \tinv(\ctime+u) } \bar{\noise}^{\pcoord}(\ctimealt)\dd \ctimealt}_{2} 		+ \norm*{ \int_{\ctime-1}^{\ctime+u} \bar{\noise}^{\pcoord}(\ctimealt)\dd \ctimealt}_{2}
	\leq 2\Delta(\ctime-1,\horizon+1).
\label{eq:hold10}
\end{align}
Combining \crefrange{eq:hold7}{eq:hold10} then gives
\begin{align}
\norm{\fpflow}_{2}
	&\leq \lips h
		\bracks{\parens{\gbound + \bar{\bbound}^{\ast}(\ctime)}\bar{\step}^{\ast}(\ctime)
	\notag\\
	&\qquad
		+ 2\Delta(\ctime-1,\horizon+1)}
		+ \Delta(\ctime,\horizon)
		+ h \bar{\bbound}^{\ast}(\ctime)
		+ \int_{0}^{h} \bracks{R_{2}(u) + R_4(u)} \dd u
	\notag\\
\label{eq:hold11}
	&\leq h C_{\lips, \gbound} \left(   \bar{\bbound}^{\ast}(\ctime) + \bar{\step}^{\ast}(\ctime) +  \Delta(\ctime-1,\horizon+1) \right) + \int_{0}^{h} \bracks{R_{2}(u) + R_4(u)} \dd u
\end{align}for some constant $ C_{\lips, \gbound}$ that depends only on $\lips$ and $\gbound$.
Using \eqref{eq:hold5} and \eqref{eq:hold11}, we can then bound \eqref{eq:decomp} as
\begin{align}
\dist(\apt{\ctime + h} , \flow[h][\apt{\ctime}])
	&= \norm{\fflow[{h}]}_{2}
		\leq  \norm{\fflow[{h}]}_{2}  + \norm{ \fpflow}_{2}
	\label{eq:hold12}
	\\
	&\leq  \norm{\fflow[{h}] - \fpflow}_{2}  + 2\norm{ \fpflow}_{2}
	\notag\\
	&\leq \lips \int_{0}^{h} \norm*{ \fflow[u] }_{2}\dd u  + 2 h C_{\lips, \gbound} \left(   \bar{\bbound}^{\ast}(\ctime) + \bar{\step}^{\ast}(\ctime) +  \Delta(\ctime-1,\horizon+1) \right)
	\notag\\ 
	&+ 3\int_{0}^{h} \bracks{R_1(u) + R_{2}(u) + R_3(u) + R_4(u)} \dd u
	\label{eq:hold13}
\end{align}
where $\bracks{R_1(u) + R_{2}(u) + R_3(u) + R_4(u)} = \bigoh_{\gbound,K_{\max},\rbound} \left( \norm*{\fflow}_{2} + \norm*{\fpflow}_{2}  \right)$.
Therefore, there exists a constant $C_{L,\gbound,K_{\max},\rbound}$ depending only on $L,\gbound,K_{\max}$, and $\rbound$ such that we may bound \eqref{eq:hold12} as
\begin{align}
\norm{\fflow[{h}]}_{2}  + \norm{ \fpflow}_{2}
	&\leq C_{L,\gbound,K_{\max},\rbound} \int_{0}^{h} \left( \norm*{\fflow}_{2} + \norm*{\fpflow}_{2}  \right) \dd u
	\notag\\
	&+ 2 h C_{\lips, \gbound} \left(   \bar{\bbound}^{\ast}(\ctime) + \bar{\step}^{\ast}(\ctime) +  \Delta(\ctime-1,\horizon+1) \right).
\end{align}
Grönwall's inequality then implies 
\begin{align}
\label{eq:hold14}
\norm{\fflow[{h}]}_{2}  + \norm{ \fpflow}_{2}
	&\leq 2 h C_{\lips, \gbound} \left(   \bar{\bbound}^{\ast}(\ctime) + \bar{\step}^{\ast}(\ctime) +  \Delta(\ctime-1,\horizon+1) \right) e^{h\cdot C_{L,\gbound,K_{\max},\rbound}}.
\end{align}
From \eqref{eq:hold14}, we may conclude that, \acl{wp1}, we have
\begin{align}
\lim_{\ctime \to \infty} \sup_{\ctime \leq h \leq \horizon}
	&\dist(\apt{\ctime + h} , \flow[h][\apt{\ctime}])
	\notag\\
	&\leq \lim_{\ctime \to \infty} \sup_{\ctime \leq h \leq \horizon } \left(\norm{\fflow[{h}]}_{2}  + \norm{ \fpflow}_{2} \right)
	\notag\\
	&\leq \lim_{\ctime \to \infty}\horizon\cdot 2C_{\lips, \gbound} \left(   \bar{\bbound}^{\ast}(\ctime) + \bar{\step}^{\ast}(\ctime) +  \Delta(\ctime-1,\horizon+1) \right) \cdot e^{\horizon\cdot C_{L,\gbound,K_{\max},\rbound}}
	= 0
\end{align}
since $\lim_{\ctime \to \infty}  \bar{\bbound}^{\ast}(\ctime) = \lim_{\ctime \to \infty}  \bar{\step}^{\ast}(\ctime) = 0$ by assumption, and $\lim_{\ctime \to \infty}  \Delta\left(\ctime-1,\horizon+1 \right) =0$ by \eqref{eq:noise_stability}, both \acl{wp1}.
\end{proof}

\section{Applications and implications}
\label{sec:applications}

Taking a step back, our goal in this section will be to show how a diverse range of algorithms can be seen as special cases of \eqref{eq:RRM}, enabling in this way the use of \cref{thm:APT,cor:ICT,cor:ICT-expl} to deduce their convergence properties.
To simplify our presentation, we will make the following technical assumption:

\begin{assumption}
\label[assumption]{asm:injectivity}
The injectivity radius of $\points$ is bounded from below by $\delta >0$.
\end{assumption}

\begin{remark*}
Recall that the injectivity radius of $\points$ at $\base$ is the radius of the largest geodesic ball on which $\exp_{\base}$ is a diffeomorphism, and the injectivity radius of $\points$ is the infimum over all such radii \citep{Lee97}.
In this regard, \cref{asm:injectivity} serves to ensure that the exponential map is invertible at consecutive iterates of \eqref{eq:RRM} so no local topological complications can arise;
we will in fact prove in \cref{prop:algorithms} that $\log_{\curr} \defeq \exp_{\curr}^{-1}$ is well-defined for all sufficiently large $\run$ in the algorithms to follow.
\endenv
\end{remark*}

For concreteness, we will also assume that the algorithms considered in this section have black-box access to $\vecfield$ via a \acdef{SFO} \citep{Nes04}.
Specifically, when called at $\point\in\points$ with random seed $\seed\in\seeds$, an \ac{SFO} returns a random vector $\orcl(\point;\seed) \in \tspace$ of the form
\begin{equation}
\label{eq:SFO}
\tag{SFO}
\orcl(\point;\seed)
	= \vecfield(\point)
		+ \err(\point;\seed)
\end{equation}
where the error term $\err(\point;\seed)\in \tspace$ is zero-mean and bounded in $L^{2}$,
\ie
\begin{equation}
\label{eq:err}
\exof{\err(\point;\seed)}
	= 0 
	\quad
	\text{and}
	\quad
\exof{\norm{\err(\point;\seed)}^{2}_\point}
	\leq \sdev^{2}
	\quad
	\text{for all $\point\in\points$.}
\end{equation}
Finally, we will also assume that all algorithms under study are run with a step-size policy $\curr[\step]$ such that
\begin{equation}
\label{eq:step}
\frac{A}{\run}
	\leq \curr[\step]
	\leq \frac{B}{\run^{1/2} (\log\run)^{1/2+\eps}}
	\quad
	\text{for some $A,B,\eps>0$ and all $\run=\running$}
\end{equation}
Clearly, \cref{asm:step} is satisfied automatically under \eqref{eq:step}.

\begin{remark*}
To facilitate comparisons with the Riemannian optimization literature, we will sometimes refer to queries of the \ac{SFO} $\orcl(\point;\seed)$ as (stochastic) ``gradients'';
we stress however that, in general, \emph{$\vecfield$ is not assumed to admit a potential}.
\endenv
\end{remark*}

\subsection{Basic examples}
\label{sec:examples}

We now proceed to outline below a \textendash\ highly incomplete \textendash\ series of algorithms that can be seen as special cases of the general template \eqref{eq:RRM}.

\begin{algo}
[\Aclp{RSGM}]
\label{alg:RSGM}
The simplest \acdef{RSGM} \citep{Bon13} queries \eqref{eq:SFO} and proceeds as
\begin{equation}
\label{eq:RSGM}
\tag{RSGM}
\next
	= \exp_{\ssstyle\curr} \left( \curr[\step] \orcl(\curr;\curr[\seed]) \right),
\end{equation}
As such, \eqref{eq:RSGM} is an \ac{RRM} scheme with $\curr[\noise] = \err(\curr;\curr[\seed])$ and $\curr[\bias] = 0$.
\endenv
\end{algo}

\begin{algo}
[\Aclp{RPPM}]
\label{alg:RPPM}
The \textpar{deterministic} \acdef{RPPM} \cite{ferreira2002proximal} is an implicit update rule of the form
\begin{equation}
\label{eq:RPPM}
\tag{RPPM}
\exp^{-1}_{\next} (\curr) = -\curr[\step] \vecfield(\next).
\end{equation}
The \ac{RRM} representation of \eqref{eq:RPPM} is then obtained by taking
\(
\curr[\bias] = \ptrans{\next}{\curr}{\vecfield(\next)} - \vecfield(\curr)
\)
and
$\curr[\noise] = 0$
in the decomposition \eqref{eq:error} of the error term $\curr[\error]$ of \eqref{eq:RRM}.
\endenv
\end{algo}

\begin{algo}[\Acl{RSEG}]
\label{alg:RSEG}
Inspired by the original work of \citet{Kor76}, the \acdef{RSEG} method \cite{tang2012korpelevich, neto2016extragradient} proceeds as
\begin{equation}
\label{eq:RSEG}
\tag{RSEG}
\begin{alignedat}{2}
\lead
	&= \expof{\ssstyle\curr}{\curr[\step]  \orcl(\curr;\curr[\seed])},
	\\
\next
	&= \expof{\ssstyle\curr}{\ptrans{\lead}{\curr}{\curr[\step]\orcl(\lead;\lead[\seed])}}
\end{alignedat}
\end{equation}
where $\curr[\seed]$ and $\lead[\seed]$ are independent seeds for \eqref{eq:SFO}.
Thus, to recast \eqref{eq:RSEG} in the \ac{RRM} framework, simply take $\curr[\noise] = \ptrans{\ssstyle\lead}{\ssstyle\curr}{ \err(\lead;\lead[\seed])}$ and $\curr[\bias] = \ptrans{\ssstyle\lead}{\ssstyle\curr}{   \vecfield(\lead)}  - \vecfield(\curr)$.
\endenv
\end{algo}

\begin{algo}[\Acl{ROG}]
\label{alg:ROG}
Compared to \eqref{eq:RSGM}, the scheme \eqref{eq:RSEG} involves two oracle queries per iteration.
Building on an original idea by \citet{Pop80}, the last oracle query can be ``recycled'', leading to the \acl{ROG} method
\begin{equation}
\label{eq:ROG}
\tag{ROG}
\begin{alignedat}{2}
\lead
	&= \expof{\ssstyle\curr}{\curr[\step] \orcl(\beforelead;{\prev[\seed]})},
	\\
\next
	&= \expof{\ssstyle\curr}{\ptrans{\lead}{\curr}{\curr[\step] \orcl(\lead;\curr[\seed])}}.
\end{alignedat}
\end{equation}
With this in mind, \eqref{eq:ROG} may be seen as a special case of \eqref{eq:RRM} by taking $\curr[\noise] = \ptrans{\lead}{\curr}{ \err(\lead;\curr[\seed])}$ and $\curr[\bias] = \ptrans{\lead}{\curr}{\vecfield(\lead)}-\vecfield(\curr)$.
\endenv
\end{algo}

In view of \cref{thm:APT,thm:ICT}, the convergence analysis of \crefrange{alg:RSGM}{alg:ROG} essentially boils down to verifying \crefrange{asm:step}{asm:conjugate}.
\Cref{prop:algorithms} below does much of the heavy lifting for this:

\begin{proposition}
\label{prop:algorithms}
Suppose that \cref{asm:injectivity} holds.
If \crefrange{alg:RSGM}{alg:ROG} are run with oracle and step-size parameters satisfying \eqref{eq:err} and \eqref{eq:step}, then:
\begin{enumerate}
\item
\Cref{asm:errorbounds} holds as stated.
\item
\Acl{wp1}, $\next$ lies in the injectivity radius of $\curr$ for all sufficiently large $\run$, so \cref{asm:conjugate} also holds as stated.
\end{enumerate}
\end{proposition}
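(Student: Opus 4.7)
The plan is to verify the two conclusions separately, handling each algorithm in sequence and using essentially the same two ingredients: the Lipschitz bound on $\vecfield$ (to control bias terms in the parallel-transported error decomposition) and the $L^2$ oracle bound \eqref{eq:err} (to control noise). Throughout we use the shorthand $\gbound = \sup \norm{\vecfield}$, which is finite by assumption, together with $\vbound$ replaced by $\gbound + \sdev$ where oracle queries are involved.

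For part (1), I would first observe the trivial cases: \eqref{eq:RSGM} has $\curr[\bias] = 0$ and $\exof{\norm{\curr[\noise]}^{2} \given \curr[\filter]} \leq \sdev^{2}$, so both summability conditions in \cref{asm:errorbounds} follow at once from $\sum \curr[\step]^{2} < \infty$ (implied by \eqref{eq:step}). For \eqref{eq:RPPM}, the Lipschitz property of $\vecfield$ gives $\norm{\curr[\bias]} = \norm{\ptrans{\next}{\curr}{\vecfield(\next)} - \vecfield(\curr)} \leq \lips \dist(\curr,\next) = \lips\curr[\step] \norm{\vecfield(\next)} \leq \lips\gbound\curr[\step]$, so $\sum \curr[\step]\exof{\curr[\bbound]} \leq \lips\gbound \sum \curr[\step]^{2} < \infty$ and $\curr[\bbound] \to 0$ since $\curr[\step] \to 0$. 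The \eqref{eq:RSEG} and \eqref{eq:ROG} cases follow the same pattern: by $\lips$-Lipschitzness of $\vecfield$ and the isometric property of parallel transport, $\norm{\curr[\bias]} \leq \lips\, \dist(\curr,\lead)$, and the latter is bounded by $\curr[\step]$ times the norm of a one-step oracle query, whose conditional second moment is at most $\parens{\gbound + \sdev}^{2}$. This yields $\exof{\norm{\curr[\bias]}} \lesssim \curr[\step]$ and $\exof{\norm{\curr[\noise]}^{2} \given \curr[\filter]} \lesssim \sdev^{2}$ (isometry of $\ptrans{}{}$), and both summability conditions again reduce to $\sum \curr[\step]^{2}<\infty$; that $\curr[\bbound] \to 0$ almost surely is immediate from the same $\curr[\step] \to 0$ estimate.

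For part (2), the key observation is that $\dist(\curr,\next) \leq \curr[\step]\parens{\gbound + \norm{\curr[\error]}}$ in every scheme, since \eqref{eq:RRM} is a one-step exponential update and the exponential map is a radial isometry along geodesics. It therefore suffices to show $\curr[\step]\norm{\curr[\error]} \to 0$ almost surely: combined with $\curr[\step] \gbound \to 0$ and \cref{asm:injectivity}, this guarantees $\dist(\curr,\next) < \delta$ for all sufficiently large $\run$, so $\log_{\curr}\next$ is well-defined and the exponential map is a diffeomorphism along the segment joining them, which in turn guarantees that the Picard flow and the true flow stay in a normal neighborhood where conjugate points do not accumulate. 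The a.s.\ convergence $\curr[\step]\norm{\curr[\error]} \to 0$ follows from the bias going to zero together with a standard Markov--Borel--Cantelli argument on the noise: for any $\eps>0$, $\probof{\curr[\step]\norm{\curr[\noise]} > \eps} \leq \curr[\step]^{2}\exof{\norm{\curr[\noise]}^{2}}/\eps^{2} \lesssim \curr[\step]^{2}\sdev^{2}$, which is summable by \eqref{eq:step}.

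The main obstacle, and where care is required, is the bias/noise decomposition in \eqref{eq:RSEG} and \eqref{eq:ROG}, because the leader iterate $\lead$ itself depends on stochastic quantities and the natural filtration must be enlarged to include the seeds used in constructing $\lead$; one must verify that, relative to this enlarged filtration, $\curr[\noise] = \ptrans{\lead}{\curr}{\err(\lead;\lead[\seed])}$ is still conditionally zero-mean (using independence of $\lead[\seed]$ from the history generating $\lead$) and that the Lipschitz estimate $\dist(\curr,\lead) \leq \curr[\step]\norm{\orcl(\curr;\curr[\seed])}$ genuinely produces an $\bigoh(\curr[\step])$ bias bound after taking conditional expectations. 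Once this bookkeeping is in place, the computation is mechanical and the remaining constants absorb into the $\lips$, $\gbound$, $\sdev$ bounds already assumed.
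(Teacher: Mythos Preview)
Your proposal is correct and tracks the paper's own argument closely: the same Lipschitz-and-isometry bound on $\curr[\bias]$, the same $L^2$ oracle bound on $\curr[\noise]$, and a Borel--Cantelli step for the almost-sure vanishing of $\curr[\step]\norm{\curr[\error]}$. The paper packages that last point as a standalone lemma (it shows $\curr[\step]\norm{\orcl(\curr;\curr[\seed])}\to 0$ a.s.\ via Chebyshev against the threshold $\sqrt{\run\log^{1+\eps/2}\run}$, using the specific form of \eqref{eq:step}), whereas you use the slightly cleaner Markov--Borel--Cantelli bound $\probof{\curr[\step]\norm{\curr[\noise]}>\eps}\lesssim\curr[\step]^{2}\sdev^{2}/\eps^{2}$, which only needs $\sum\curr[\step]^{2}<\infty$; both routes are valid and yield the same conclusion.

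One small point to tighten: for \eqref{eq:RSEG} and \eqref{eq:ROG} you write that ``$\curr[\bbound]\to 0$ almost surely is immediate from the same $\curr[\step]\to 0$ estimate,'' but the pointwise bound is $\norm{\curr[\bias]}\leq\lips\curr[\step]\norm{\orcl(\curr;\curr[\seed])}$ with a random oracle norm, so you really do need the Borel--Cantelli argument (applied to the \emph{leading-step} oracle query) here as well, not just $\curr[\step]\to 0$. You clearly have this tool in hand from part (2), and your closing paragraph on enlarging the filtration to include the leading seed is exactly the bookkeeping that makes the decomposition $\curr[\bias]=\ptrans{\lead}{\curr}{\vecfield(\lead)}-\vecfield(\curr)$ legitimate --- the paper glosses over this, so your treatment is in fact more careful on that front.
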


\begin{corollary}
\label{cor:algorithms}
Suppose that \cref{hyp:Hadamard,hyp:WC} hold.
If \crefrange{alg:RSGM}{alg:ROG} are run with oracle and step-size parameters satisfying \eqref{eq:err} and \eqref{eq:step},
then, \acl{wp1}, the generated sequence $\curr$, $\run=\running$,
converges to an \ac{ICT} set of \eqref{eq:RMD}.
\end{corollary}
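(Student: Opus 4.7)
The plan is to reduce \cref{cor:algorithms} to \cref{cor:ICT-expl} by verifying the latter's hypotheses one at a time for each of \crefrange{alg:RSGM}{alg:ROG}. Since \cref{hyp:Hadamard,hyp:WC} are already assumed by hypothesis, the only remaining premises to check are \cref{asm:step} and \cref{asm:errorbounds}; once these are in place, the conclusion follows immediately from \cref{cor:ICT-expl}.

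For \cref{asm:step}, I would argue directly from the sandwich bound \eqref{eq:step}: the lower bound $\curr[\step] \geq A/\run$ dominates the harmonic series, giving $\sum_{\run} \curr[\step] = \infty$, while the upper bound yields $\curr[\step]^{2} \leq B^{2}/(\run \log^{1+2\eps} \run)$, whose sum converges by Bertrand's test. For \cref{asm:errorbounds}, I would invoke \cref{prop:algorithms}. Its statement requires \cref{asm:injectivity}, but this is automatic under \cref{hyp:Hadamard}: by the Cartan--Hadamard theorem, $\exp_{\base}$ is a global diffeomorphism at every $\base \in \points$, so the injectivity radius of $\points$ is in fact infinite and thus bounded below by any $\delta > 0$. \Cref{prop:algorithms} then delivers \cref{asm:errorbounds} (its conjugacy conclusion is redundant here, as \cref{prop:bounded} already provides \cref{asm:conjugate} from \cref{hyp:Hadamard} alone). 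With \cref{asm:step,asm:errorbounds,hyp:Hadamard,hyp:WC} all in force, \cref{cor:ICT-expl} yields the stated convergence to an \ac{ICT} set of \eqref{eq:RMD}, \acl{wp1}, for each of the four algorithms.

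The corollary itself presents no genuine obstacle, since all the substantive work has been absorbed into the preceding results. In particular, the algorithm-specific decomposition of the error term $\curr[\error] = \curr[\noise] + \curr[\bias]$ for each scheme, together with the second-moment and almost-sure decay estimates needed to match the summability requirements of \cref{asm:errorbounds}, is exactly what \cref{prop:algorithms} is designed to handle; the corollary merely packages these verifications through the general \ac{APT} machinery of \cref{thm:APT,thm:ICT} and the Hadamard-plus-coercivity stability result of \cref{prop:bounded}. Were one to confront a genuine difficulty, it would lie upstream --- in establishing the bias/noise bounds for the retraction-free \eqref{eq:RSEG} and \eqref{eq:ROG} updates where transported oracle queries generate a non-vanishing bias $\curr[\bias] = \ptrans{\lead}{\curr}{\vecfield(\lead)} - \vecfield(\curr)$ that must be shown to decay fast enough in the sense of \eqref{eq:errorbounds}.
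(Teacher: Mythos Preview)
Your proposal is correct and matches the paper's intended argument: the corollary is not given an explicit proof in the paper, but its placement immediately after \cref{prop:algorithms} makes clear it is meant to follow by combining that proposition (which supplies \cref{asm:errorbounds} once \cref{asm:injectivity} is in force) with \cref{cor:ICT-expl}. Your observation that \cref{hyp:Hadamard} yields \cref{asm:injectivity} via Cartan--Hadamard is exactly the missing link, and your verification of \cref{asm:step} from \eqref{eq:step} is straightforward and correct.
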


To streamline our discussion, we postpone the proof of \cref{prop:algorithms} to \cref{app:applications}.

\subsection{Algorithmic variants and modifications}
\label{sec:variants}

To increase the computational efficiency of Riemannian iterative schemes, several important operations are routinely tacked on to the base algorithms described in the previous section.
In view of this, we proceed below to illustrate a range of algorithmic variants and modifications that can be readily incorporated in the general framework of \eqref{eq:RRM}.

\para{Retraction-based methods}

When the exponential map is expensive to compute, a popular alternative is to employ a so-called \emph{retraction map} \citep{AMS08,Bou22}, defined here as a smooth mapping $\mathcal{R} \from \tbundle \to \points$ that agrees with the exponential map up to first order, \ie
\begin{equation}
\label{eq:retraction-def}
\tag{Rtr}
\retrbase_{\point}(0)
	= \point
	\quad
	\text{and}
	\quad
\left.\ddt\right\vert_{\ctime=0} \retrbase_{\point}(\ctime \vvec)
	= \vvec
	\quad
	\text{for all $(\point,\vvec)\in \tbundle$}.
\end{equation}
As it turns out, to replace the exponential map in \crefrange{alg:RSGM}{alg:ROG} with a retraction, we only need to slightly strengthen our assumptions on the noise in \eqref{eq:SFO}:

\begin{proposition}
\label{prop:retraction}
Suppose that the error term $\err(\point;\seed)$ of \eqref{eq:SFO} is bounded in $L^{4}$, \ie $\exof{\norm{\err(\point;\seed)}^{4}_\point} \leq \kappa^2$ for some $\kappa>0$ and all $\point\in\points$.
Then \cref{prop:algorithms} holds as stated if the exponential map in \crefrange{alg:RSGM}{alg:ROG} is replaced by a retraction.
\end{proposition}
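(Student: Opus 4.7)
The key geometric fact is that any retraction satisfies $\retrbase_{\point}(0)=\point$ and $d\retrbase_{\point}(0)=\id$, so a Taylor expansion in normal coordinates, restricted to the precompact set of iterates furnished by \cref{asm:compact}, yields
\begin{equation}
\label{eq:retraction-bound-sketch}
\dist\bigl(\retrbase_{\point}(\vvec),\expof{\point}{\vvec}\bigr)
    \leq c\,\norm{\vvec}_{\point}^{2}
\end{equation}
for some constant $c$ depending only on the relevant compact region. My plan is to use \eqref{eq:retraction-bound-sketch} to rewrite every retraction-based iterate as an \eqref{eq:RRM} step driven by the \emph{true} exponential map, and then to absorb the second-order defect into the bias/noise decomposition of \eqref{eq:error}.

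For the injectivity-radius part of \cref{prop:algorithms}, applying \eqref{eq:retraction-bound-sketch} in a single step gives $\dist(\curr,\next)\leq \curr[\step]\norm{\vvec_{\run}}+c\,\curr[\step]^{2}\norm{\vvec_{\run}}^{2}$, where $\vvec_{\run}$ is the vector passed to the retraction. Repeating the Borel--Cantelli argument of \cref{prop:algorithms} \textendash\ which uses only $\curr[\step]\to 0$, boundedness of $\vecfield$, and $\sum \curr[\step]^{2}\sdev^{2}<\infty$ \textendash\ shows that this distance is eventually below $\delta/2$ \acs{wp1}, so $\next$ lies in the injectivity radius of $\curr$ and \cref{asm:conjugate} continues to hold. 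With $\exp_{\curr}$ now invertible at $\next$, I would define the effective signal $\curr[\signal]\defeq \curr[\step]^{-1}\log_{\curr}(\next)$ and use \eqref{eq:retraction-bound-sketch} to write
\begin{equation}
\curr[\signal]
    = \vecfield(\curr)+\err(\curr;\curr[\seed])+\Delta_{\run},
\qquad
\norm{\Delta_{\run}}\leq c\,\curr[\step]\,\norm{\vecfield(\curr)+\err(\curr;\curr[\seed])}^{2},
\end{equation}
and similarly for \eqref{eq:RPPM}, \eqref{eq:RSEG}, and \eqref{eq:ROG} after running the same expansion on each retraction invocation they contain.

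The remaining task is the usual bias/noise split: set $\hat\bias_{\run}\defeq \exof{\Delta_{\run}\given\curr[\filter]}$ and let $\Delta_{\run}-\hat\bias_{\run}$ be folded into the noise. For the bias, $\exof{\norm{\hat\bias_{\run}}}\leq c\,\curr[\step](\gbound^{2}+\sdev^{2})$, which yields $\sum\curr[\step]\exof{\norm{\hat\bias_{\run}}}\leq C\sum\curr[\step]^{2}<\infty$ and $\hat\bias_{\run}\to 0$ \as. For the second moment, $\exof{\norm{\Delta_{\run}}^{2}\given\curr[\filter]}\leq c^{2}\curr[\step]^{2}\exof{\norm{\vecfield(\curr)+\err}^{4}\given\curr[\filter]}$; this is the single step where the $L^{4}$ hypothesis becomes indispensable, since an $L^{2}$ bound on $\err$ cannot control its fourth power. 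Under the stronger hypothesis $\exof{\norm{\err}^{4}}\leq \kappa^{2}$, this extra variance is dominated by $C\curr[\step]^{2}(\gbound^{4}+\kappa^{2})$, which combines with the original $\sdev^{2}$ to give an effective $\sdev^{2}_{\run}$ that still satisfies the first summability condition of \cref{asm:errorbounds}.

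The main obstacle I anticipate is in the two-step schemes \eqref{eq:RSEG} and \eqref{eq:ROG}: the leading iterate $\lead$ is itself a retraction-perturbed point, displaced from its exponential analogue by $\bigoh(\curr[\step]^{2})$, and the subsequent oracle call $\orcl(\lead;\cdot)$ together with the parallel transport $\ptrans{\lead}{\curr}{\cdot}$ both depend on this perturbed point. The plan there is to use the Lipschitz continuity of $\vecfield$ and the smoothness of parallel transport along short geodesics to show that all additional corrections are first-order in $\curr[\step]$ or smaller, and so classifiable as either summable bias or as noise with $L^{2}$-bounded conditional variance; the $L^{4}$ bound is once again what permits the variance control, through the $\curr[\step]^{2}\norm{\cdot}^{4}$ prefactor inherited from \eqref{eq:retraction-bound-sketch}.
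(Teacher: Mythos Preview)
Your approach is essentially the same as the paper's: both Taylor-expand the retraction in normal coordinates to obtain a correction $\Delta_{\run}=\bigoh(\curr[\step]\norm{\orcl(\curr;\curr[\seed])}^{2})$, rewrite the update as an exponential-map step with this extra error, and then use the $L^{4}$ hypothesis precisely to bound $\exof{\norm{\Delta_{\run}}^{2}}$ while the bias summability follows from $\sum\curr[\step]^{2}<\infty$. One small caveat is that you invoke \cref{asm:compact} to get a uniform constant $c$ in \eqref{eq:retraction-bound-sketch}, but \cref{prop:algorithms} (and hence \cref{prop:retraction}) does not formally include precompactness among its hypotheses; the paper's proof is equally informal on this point, leaving the big-$\bigoh$ constant implicit.
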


\begin{corollary}
\label{cor:retraction}
Suppose that \cref{hyp:Hadamard,hyp:WC} hold.
If a retraction-based variant of \crefrange{alg:RSGM}{alg:ROG} is run with
a step-size schedule satisfying \eqref{eq:step}
and
an \ac{SFO} with bounded fourth moments,
then, \acl{wp1}, the generated sequence $\curr$, $\run=\running$,
converges to an \ac{ICT} set of \eqref{eq:RMD}.
\end{corollary}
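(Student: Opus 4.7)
The plan is to reduce the statement to \cref{cor:ICT} by verifying the four blanket assumptions of \cref{sec:RRM} for the retraction-based variants of \crefrange{alg:RSGM}{alg:ROG}. Since \cref{cor:ICT} is itself a direct composition of \cref{thm:APT} (the \ac{APT} property) and \cref{thm:ICT} (limit sets of \acp{APT} are \ac{ICT}), once \crefrange{asm:step}{asm:conjugate} are in place the conclusion follows immediately.

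First, \cref{asm:step} is automatic from \eqref{eq:step}: the lower bound $\curr[\step] \geq A/\run$ forces $\sum_{\run} \curr[\step] = \infty$, while the upper bound $\curr[\step] \leq B/(\run^{1/2}(\log\run)^{1/2+\eps})$ is square-summable. Next, \cref{hyp:Hadamard} rules out conjugate points altogether by the Cartan–Hadamard theorem, so \cref{asm:conjugate} is vacuously satisfied; combined with \cref{hyp:WC}, \cref{prop:bounded} also yields \cref{asm:compact} \acs{wp1}. Thus the only nontrivial task is to recast the retraction-based iterates as instances of \eqref{eq:RRM} whose error terms obey \cref{asm:errorbounds}.

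To do so, I would write each retraction update $\retrbase_{\curr}(\curr[\step] \widehat{\vvec}_{\run}) = \expof{\curr}{\curr[\step] \widehat{\vvec}_{\run} + \Delta_{\run}}$, where the retraction discrepancy $\Delta_{\run}$ is $\bigoh(\curr[\step]^{2} \norm{\widehat{\vvec}_{\run}}^{2})$ uniformly on any precompact set in $\tbundle$ (this follows from \eqref{eq:retraction-def} and the smoothness of $\retrbase$). Absorbing $\Delta_{\run}/\curr[\step]$ into the error term $\curr[\error]$ and decomposing it as in \eqref{eq:error}, the bias acquires an additional contribution of order $\curr[\step]\, \exof{\norm{\widehat{\vvec}_{\run}}^{2}}$. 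The step from \cref{prop:algorithms} to \cref{prop:retraction} then requires showing that this extra contribution is summable when weighted by $\curr[\step]$: this is where the $L^{4}$ moment bound on $\err(\point;\seed)$ intervenes, giving $\exof{\norm{\widehat{\vvec}_{\run}}^{2}} \leq \vbound^{2} + \sdev^{2}$ together with controllable fourth-moment contributions from the lookahead queries in \eqref{eq:RSEG} and \eqref{eq:ROG}. Given that $\sum_{\run} \curr[\step]^{3} < \infty$ under \eqref{eq:step}, the resulting bias series converges, and the same argument used to prove \cref{prop:algorithms} yields \cref{asm:errorbounds}; this is precisely the content of \cref{prop:retraction}, which I would invoke directly.

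With \crefrange{asm:step}{asm:conjugate} verified, \cref{cor:ICT} applies and delivers the \acl{wp1} convergence of $\curr$ to an \ac{ICT} set of \eqref{eq:RMD}. The genuine technical obstacle in this chain lies entirely inside \cref{prop:retraction}: controlling the quadratic retraction-correction terms $\Delta_{\run}$ when evaluated on \emph{noisy} inner step $\widehat{\vvec}_{\run}$ requires the $L^{4}$ bound, since an $L^{2}$ bound alone would leave a non-summable residual $\curr[\step] \exof{\norm{\err}^{2}}$ in the bias. Everything else is a direct reassembly of results already established.
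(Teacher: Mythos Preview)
Your reduction is correct and mirrors the paper's (implicit) argument: the corollary follows by combining \cref{prop:retraction} (which furnishes \cref{asm:errorbounds} under the $L^4$ hypothesis), \cref{prop:bounded} (which, under \cref{hyp:Hadamard,hyp:WC}, yields \cref{asm:compact,asm:conjugate}), and then \cref{cor:ICT}. One imprecision in your explanatory aside on \cref{prop:retraction}: the $L^4$ moment is \emph{not} needed for bias summability --- the retraction correction contributes a bias of order $\curr[\step]\,\exof{\norm{\widehat\vvec_\run}^2} = \bigoh(\curr[\step])$ already under $L^2$, and $\sum_\run \curr[\step]\cdot\curr[\step] = \sum_\run \curr[\step]^2 < \infty$ handles that (so your ``non-summable residual'' and the appeal to $\sum_\run \curr[\step]^3$ are both off). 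The $L^4$ bound is required instead to control the \emph{second moment} of the total error $\curr[\error]$, because the retraction discrepancy $\Delta_\run/\curr[\step] = \bigoh(\curr[\step]\norm{\widehat\vvec_\run}^2)$, once squared, produces a $\norm{\widehat\vvec_\run}^4$ term. Since you ultimately invoke \cref{prop:retraction} as a black box, this does not affect the validity of your proof of the corollary itself.
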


\begin{remark}
Albeit relatively light, the condition $\sup_{\point}\exof{\norm{\err(\point;\seed)}^{4}_\point} < \infty$ cannot be relaxed.
This is true even in the Euclidean case with $\retrbase_{\point}(\vvec) = \point + \left(e^{\vvec}-1\right)$ and $\err(\point;\seed) \overset{\mathrm{law}}{=} \sqrt{\state} - \exof{\sqrt{\state}}$, where $\probof*{\state > \point} = \point^{-3/2}$ if $\point \geq 1$, and $\probof*{\state > \point} = 1$ otherwise.
\endenv
\end{remark}

\begin{remark}
In many practical settings, the map $\retrbase_{\point}(\vvec)$ satisfies the stronger requirement of being a ``second-order retraction'', \ie it agrees with $\exp$ up to \emph{second} order \citep{AMS08,Bou22}.
In this case, the proof technique in \cref{app:retraction} can be used to show that it suffices to have $\sup_{\point\in\points}\exof{\norm{\err(\point;\seed)}^{3}_\point}< \infty$ in \cref{prop:retraction}.
\endenv
\end{remark}

\para{Alternating variants}

The next set of variants concerns the case where $\vecfield$ is generated from a $2$-player, min-max game (the $\nPlayers$-player general-sum case is similar, but the notation is more involved so we omit it).

To state it, consider a min-max game of the form
\(
\min_{\minvar\in\minvars} \max_{\maxvar\in\maxvars}
	\minmax(\minvar,\maxvar)
\)
played over the smooth manifolds $\minvars$ and $\maxvars$ (for the min and max player respectively).
Then, instead of performing \emph{simultaneous} updates for each player in \crefrange{alg:RSGM}{alg:ROG}, a common variant is to \emph{alternate} variables according to the basic recursion
\begin{equation}
\label{eq:RARM}
\tag{RRM-alt}
\begin{alignedat}{2}
\next[\minstate]
	&= \expof{\ssstyle\curr[\minstate]}{\curr[\step] \bracks{\vecfield_{\minvar}(\curr[\minstate],\curr[\maxstate]) + \error_{\minvar,\run}}}
	\\
\next[\maxstate]
	&= \expof{\ssstyle\curr[\maxstate]}{\curr[\step]\bracks{\vecfield_{\maxvar}(\next[\minstate],\curr[\maxstate]) + \error_{\maxvar,\run}}}
\end{alignedat}
\end{equation}
where $(\vecfield_{\minvar}, \vecfield_{\maxvar}) \defeq (-\rgrad_{\minvar} \minmax, \rgrad_{\maxvar} \minmax )$,
and $\error_{\minvar,\run}$ and $\error_{\maxvar,\run}$ respectively denote the error terms entering
\eqref{eq:RRM} for the min and max player respectively.
Our next result shows that the alternating variants of \crefrange{alg:RSGM}{alg:ROG} retain the convergence properties of their simultaneous counterparts:

\begin{proposition}
\label{prop:alternation}
Suppose that \crefrange{alg:RSGM}{alg:ROG} are run with alternating updates as per \eqref{eq:RARM} and oracle and step-size parameters satisfying \eqref{eq:err} and \eqref{eq:step}.
Then \cref{prop:algorithms} holds as stated.
\end{proposition}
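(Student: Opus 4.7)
The plan is to recast \eqref{eq:RARM} as a genuine instance of \eqref{eq:RRM} on the product manifold $\points = \minvars \times \maxvars$ (equipped with the product metric) and then bootstrap from \cref{prop:algorithms} for the simultaneous case. The key observation is that on a product manifold the exponential map factors, \ie $\expof{(\minvar,\maxvar)}{(\vvec,\vvecalt)} = (\expof{\minvar}{\vvec}, \expof{\maxvar}{\vvecalt})$, and the analogous factorization holds for any componentwise retraction. Under this identification, \eqref{eq:RARM} takes the form
\[
\next
	= \expof{\curr}{\curr[\step]\parens{\vecfield(\curr) + \curr[\error]}},
\qquad
\curr = (\curr[\minstate], \curr[\maxstate]), \;\; \vecfield = (\vecfield_{\minvar},\vecfield_{\maxvar}),
\]
with an error term that decomposes as
\[
\curr[\error]
	= (\error_{\minvar,\run},\error_{\maxvar,\run})
		+ \bigl(0,\;\vecfield_{\maxvar}(\next[\minstate],\curr[\maxstate]) - \vecfield_{\maxvar}(\curr[\minstate],\curr[\maxstate])\bigr),
\]
\ie the alternating scheme is the simultaneous one plus an ``alternation drift'' living in the $\maxvar$-block.

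Next, I would control this drift using the Lipschitz continuity of $\vecfield_{\maxvar}$. Since $\dist(\next[\minstate],\curr[\minstate]) \leq \curr[\step]\norm{\vecfield_{\minvar}(\curr) + \error_{\minvar,\run}}_{\curr[\minstate]}$ and the oracle noise is $L^{2}$-bounded by \eqref{eq:err}, the drift is bounded in $L^{2}$ by $\bigoh(\curr[\step])$. Splitting it into its $\curr[\filter]$-conditional mean and zero-mean parts, the former inflates $\curr[\bbound]$ by an additional $\bigoh(\curr[\step])$ term (summable against $\curr[\step]$ since $\sum_{\run}\curr[\step]^{2} < \infty$) and also tends to $0$ \as, while the latter inflates $\curr[\sdev]^{2}$ by $\bigoh(\curr[\step]^{2})$ (summable against $\curr[\step]^{2}$). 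Adding these contributions to the noise/bias bounds already established for the simultaneous versions in the proof of \cref{prop:algorithms} yields \cref{asm:errorbounds} for the alternating variants.

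For the injectivity-radius conclusion, the argument transfers verbatim: since $\curr[\step]\to 0$ and the total signal $\vecfield(\curr)+\curr[\error]$ is $L^{2}$-bounded uniformly over $\run$, the Riemannian distance $\dist(\curr,\next)$ on the product manifold tends to $0$ \as, so $\next$ eventually falls within the injectivity radius $\delta$ of $\curr$ guaranteed by \cref{asm:injectivity}, and \cref{asm:conjugate} follows just as in the simultaneous case.

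The main obstacle is the case-by-case bookkeeping for \eqref{eq:RSEG} and \eqref{eq:ROG}, whose alternating versions require careful specification of \emph{which} lead point $\lead[\minstate]$, $\lead[\maxstate]$ is fed into each gradient call; this produces nested parallel-transport chains of the form $\ptrans{\lead}{\curr}{\vecfield(\cdot)}$ whose arguments mix already-updated and not-yet-updated coordinates. The mechanism, however, is always the same: every replacement of a ``$\curr$''-indexed coordinate by a ``$\next$''-indexed (or ``$\lead$''-indexed) one shifts the base point by a Riemannian distance of order $\curr[\step]$, so by Lipschitz continuity of $\vecfield$ each such substitution adds a bias of order $\bigoh(\curr[\step])$ and a noise increment of order $\bigoh(\curr[\step]^{2})$ to the decomposition \eqref{eq:error}. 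Once this is verified for each algorithm, the conclusions of \cref{prop:algorithms} carry over unchanged, completing the proof.
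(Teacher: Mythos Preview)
Your proposal is correct and follows the same route as the paper: rewrite the alternating update on the product manifold as the simultaneous \eqref{eq:RRM} scheme plus an ``alternation drift'' in the $\maxvar$-block, bound that drift via the Lipschitz continuity of $\vecfield_{\maxvar}$ by $\lips\,\dist(\next[\minstate],\curr[\minstate]) = \bigoh(\curr[\step])$, and then feed the result back into the error budget of \cref{prop:algorithms}. The paper's proof (\cref{app:alternation}) does exactly this, invoking \cref{lem:vanishing-noise} for the a.s.\ vanishing of the drift and then declaring that the modified bias $\bias'_{\maxvar,\run} = \bias_{\maxvar,\run} + \vecfield_{\maxvar}(\next[\minstate],\curr[\maxstate]) - \vecfield_{\maxvar}(\curr[\minstate],\curr[\maxstate])$ satisfies \eqref{eq:errorbounds}.

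One small remark: your treatment is in fact slightly more careful than the paper's. You split the drift into its $\curr[\filter]$-conditional mean and its zero-mean fluctuation, routing the former into $\curr[\bbound]$ and the latter into $\curr[\sdev]^{2}$; the paper instead lumps the entire drift into the bias term, even though $\vecfield_{\maxvar}(\next[\minstate],\curr[\maxstate])$ is not $\curr[\filter]$-measurable (since $\next[\minstate]$ depends on the oracle seed $\curr[\seed]$). Your decomposition is the one that matches the canonical splitting \eqref{eq:error}, so it is the cleaner argument.
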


\begin{corollary}
\label{cor:alternation}
Suppose that \cref{hyp:Hadamard,hyp:WC} hold.
If an alternating variant of \crefrange{alg:RSGM}{alg:ROG} is run with oracle and step-size parameters satisfying \eqref{eq:err} and \eqref{eq:step},
then, \acl{wp1}, the generated sequence $\curr$, $\run=\running$,
converges to an \ac{ICT} set of \eqref{eq:RMD}.
\end{corollary}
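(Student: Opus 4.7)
The plan is to reduce \cref{cor:alternation} to a routine concatenation of \cref{prop:alternation}, \cref{prop:bounded}, and \cref{thm:APT,thm:ICT}, following the same template that yields \cref{cor:ICT-expl} from \cref{prop:bounded} and \cref{cor:algorithms} from \cref{prop:algorithms}. The state space for an alternating variant is the product manifold $\points = \minvars \times \maxvars$ equipped with the product Riemannian metric, and $\vecfield = (\vecfield_{\minvar},\vecfield_{\maxvar})$ so that \eqref{eq:RARM} is genuinely an instance of \eqref{eq:RRM} on $\points$ (with the min-update plugged into the max-update playing the role of a structured bias, already accommodated by \cref{prop:alternation}).

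First I would verify the four blanket assumptions of \cref{thm:APT}. \cref{asm:step} is immediate from the step-size envelope \eqref{eq:step}, since $\curr[\step] \geq A/\run$ gives $\sum_{\run} \curr[\step] = \infty$ and $\curr[\step] \leq B/[\run^{1/2}(\log\run)^{1/2+\eps}]$ gives $\sum_{\run}\curr[\step]^{2} < \infty$. \cref{asm:errorbounds} is supplied directly by \cref{prop:alternation}, which asserts that \cref{prop:algorithms} still holds in the alternating regime; in particular the noise and bias decompositions of $\error_{\minvar,\run}$ and $\error_{\maxvar,\run}$ satisfy the summability conditions \eqref{eq:errorbounds}. \cref{asm:compact} follows from \cref{prop:bounded} thanks to \cref{hyp:Hadamard,hyp:WC}, noting that on a product of Hadamard manifolds the product is again Hadamard with the Heine--Borel property, and \eqref{eq:WC} for the product field is inherited from its components with a base point $\base = (\base_{\minvar},\base_{\maxvar})$. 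Finally, \cref{asm:conjugate} holds \as\ by the second part of \cref{prop:alternation} (which gives $\next$ in the injectivity radius of $\curr$ for all large $\run$) together with the \cref{hyp:Hadamard}-only half of \cref{prop:bounded}.

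With all four assumptions in force, \cref{thm:APT} yields that the geodesic interpolation $\apt{\ctime}$ of the iterates generated by \eqref{eq:RARM} is \as\ an \acl{APT} of \eqref{eq:RMD} on $\points$; \cref{asm:compact} further ensures that this \acl{APT} is precompact. Applying \cref{thm:ICT} then forces the limit set $\limset(\state)$ to be an \ac{ICT} set of \eqref{eq:RMD}, which gives the claimed almost sure convergence of $\curr$ to an \ac{ICT} set.

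The only non-bookkeeping point to keep in mind \textendash\ and the closest thing to a genuine obstacle \textendash\ is checking that \cref{prop:alternation}, which is stated componentwise, really does supply \cref{asm:errorbounds,asm:conjugate} \emph{on the product manifold}; the delicate piece here is the cross-term $\vecfield_{\maxvar}(\next[\minstate],\curr[\maxstate]) - \vecfield_{\maxvar}(\curr[\minstate],\curr[\maxstate])$ arising from evaluating the max-player's update at the already-advanced min-coordinate, which introduces an $O(\curr[\step])$ systematic bias. Because this contribution inherits the summability of $\curr[\step]^{2}$ once multiplied by the Lipschitz modulus of $\vecfield_{\maxvar}$, it is absorbed into $\curr[\bias]$ without disturbing \eqref{eq:errorbounds}; this is precisely the work done inside \cref{prop:alternation}, so no additional argument is required at the level of \cref{cor:alternation}.
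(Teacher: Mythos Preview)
Your proposal is correct and follows essentially the same approach as the paper, which does not spell out a separate proof for \cref{cor:alternation} but treats it as an immediate consequence of \cref{prop:alternation} combined with \cref{prop:bounded} and \cref{thm:APT,thm:ICT} (mirroring \cref{cor:algorithms,cor:retraction}). Your additional remarks on the product Hadamard structure and the cross-term bias are accurate and already absorbed into the paper's proof of \cref{prop:alternation}.
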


\newmacro{\RM}{ { \mathrm{RM} } }

\begin{remark*}
A simple but useful observation is that compositions of \ac{RRM} schemes
do not change its asymptotic behavior:
specifically, for any two \ac{RRM} schemes $\RM_1$ and $\RM_2$, the update $\next = \RM_2 \circ \RM_1( \curr ) $ is equivalent to a new \ac{RRM} scheme $\curr[\tilde{\state}]$ where $\tilde{\state}_{2\run} = \state_\run$ and $\tilde{\state}_{2\run+1} = \RM_1(\tilde{\state}_{2\run})$.
This allows us to ``mix-and-match'' \crefrange{prop:algorithms}{prop:alternation} to prove, for instance, the convergence of alternating \eqref{eq:RSEG} minimizer \vs~retraction-based \eqref{eq:RPPM} maximizer in Riemannian two-player games.
\endenv
\end{remark*}

\subsection{Implications for optimization and learning}
\label{sec:implications}

We conclude this section with some concrete implications of our general theory when $\vecfield$ is specialized to specific instances that arise in optimization and learning theory.
This allows us to extend and unify several existing results, but also to obtain completely new ones altogether.

\para{Optimization on manifolds}
Perhaps the most common task for learning on manifolds is the basic minimization problem
\begin{equation}
\label{eq:opt}
\tag{Opt}
\min\nolimits_{\point\in\points}
	\obj(\point)
\end{equation}
where $\obj\from\points\to\R$ is a $C^{\vdim}$-smooth function (not necessarily geodesically convex).
In this case, applying our general theory to $\vecfield = -\nabla\obj$ yields:

\begin{proposition}
\label{prop:opt}
Suppose that \crefrange{alg:RSGM}{alg:ROG} are run against \eqref{eq:opt} with oracle and step-size parameters satisfying \eqref{eq:err} and \eqref{eq:step}.
Assume further that the problem satisfies 
\begin{enumerate*}
[\upshape(\itshape a\upshape)]
\item
\cref{asm:compact,asm:injectivity};
or, alternatively,
\item
\cref{hyp:Hadamard,hyp:WC}.
\end{enumerate*}
Then, \acl{wp1}, the induced sequence of iterates $\curr$, $\run=\running$, converges to a component of critical points of $\obj$ on which $\obj$ is constant.

If, in addition, $\sup_{\point}\exof{\norm{\err(\point;\seed)}^{4}_\point} < \infty$, the above conclusions apply to all retraction-based variants of \crefrange{alg:RSGM}{alg:ROG}.
\end{proposition}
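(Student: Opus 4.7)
My plan is to derive \cref{prop:opt} by specializing the general convergence apparatus established earlier in the paper to the case $\vecfield = -\nabla\obj$ and then identifying the \acl{ICT} sets of the resulting gradient flow. Under hypothesis (a), \cref{asm:compact,asm:injectivity} hold by assumption, the step-size schedule \eqref{eq:step} implies \cref{asm:step}, and \cref{prop:algorithms} furnishes \cref{asm:errorbounds,asm:conjugate} for \crefrange{alg:RSGM}{alg:ROG}; \cref{thm:APT} followed by \cref{cor:ICT} then yields almost sure convergence of $\curr$ to an \acl{ICT} set $\sset$ of \eqref{eq:RMD}. Under hypothesis (b), I can bypass the individual assumption-checking and invoke \cref{cor:ICT-expl} directly. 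For retraction-based variants, the added $L^{4}$ control on the oracle noise is precisely what \cref{prop:retraction} and \cref{cor:retraction} require, so the same conclusion carries through.

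Once convergence to an \acl{ICT} set $\sset$ of the gradient flow $\dotorbit{\ctime}=-\nabla\obj(\orbit{\ctime})$ is in hand, it remains to show that $\sset\subset\crit\obj$ and that $\obj$ is constant on $\sset$. The main tool is the strict Lyapunov identity \(\tfrac{\dd}{\dd\ctime}\obj(\orbit{\ctime}) = -\norm{\nabla\obj(\orbit{\ctime})}_{\orbit{\ctime}}^{2}\leq 0,\) with equality only at points of $\crit\obj$. Let $c = \max_{\sset}\obj$ and $L = \{\point\in\sset:\obj(\point)=c\}$, which is nonempty and compact because $\sset$ is compact. Two-sided invariance of $\sset$ combined with monotonicity of $\obj$ along forward orbits forces $L$ itself to be invariant under $\flowmap$: for $\point\in L$, forward invariance of $\sset$ gives $\obj(\flow[\ctime][\point])\leq c$ for $\ctime\geq 0$, while $\flow[\ctime][\point]\in\sset$ also gives $\obj(\flow[\ctime][\point])\leq c$, and backward invariance forces the reverse inequality. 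Therefore $\obj\circ\flow[\cdot][\point]$ is constant on $L$, and the Lyapunov identity collapses each such orbit to a single equilibrium, so $L\subset\crit\obj$.

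To upgrade $L$ to all of $\sset$, I will use the fact that \acl{ICT} sets are connected (\citealp[Prop.~5.3]{Ben99}). The set $L$ is closed in $\sset$ by continuity of $\obj$, and standard Conley-type arguments (based again on the strict Lyapunov property, applied on a neighborhood of $L$ within $\sset$) show that $L$ is also open in $\sset$; connectedness then forces $\sset=L$. This simultaneously gives $\sset\subset\crit\obj$ and $\obj\equiv c$ on $\sset$, which is exactly the statement of \cref{prop:opt}.

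The main obstacle I anticipate is not any one of these steps individually but ensuring the Lyapunov argument works on a bona fide manifold: two-sided invariance of $\sset$ under $\flowmap$ and the need for $\obj$ to be constant on orbits in $L$ both hinge on completeness and smoothness of the flow, which are already guaranteed by the standing assumptions on $\vecfield$. Beyond that, the proof is essentially a careful bookkeeping exercise that assembles \crefrange{cor:ICT}{cor:retraction} with the classical gradient-flow dynamics — no new technical machinery is required.
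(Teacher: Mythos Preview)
Your reduction to \ac{ICT} convergence via \cref{prop:algorithms}, \cref{cor:ICT}, and (under \cref{hyp:Hadamard,hyp:WC}) \cref{cor:ICT-expl} is correct and mirrors the paper exactly; the same goes for the retraction-based case via \cref{prop:retraction,cor:retraction}.

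The gap lies in your identification of the \ac{ICT} set. The argument that $L=\argmax_{\sset}\obj$ is invariant and contained in $\crit\obj$ can be made to work, but the decisive step ``$L$ is open in $\sset$'' is not established by a wave toward ``standard Conley-type arguments.'' In fact it can fail without an extra ingredient: if $\crit\obj$ contained a connected arc on which $\obj$ is non-constant, that arc would itself be \ac{ICT} (the restricted flow is trivial, so by connectedness its only attractor is the full arc), yet your $L$ would be a proper closed, non-open subset of it. What rules out such arcs is Sard's theorem, which under the standing $C^{\vdim}$ regularity of $\obj$ guarantees that the set of critical \emph{values} $\obj(\crit\obj)$ has empty interior. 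The paper invokes exactly this and then cites Proposition~6.4 of \citet{Ben99} to conclude that every \ac{ICT} set of the gradient flow lies in $\crit\obj$ with $\obj$ constant on it. You either need to cite that result or reproduce its proof; in either case Sard's theorem (and hence the $C^{\vdim}$ hypothesis) is the missing piece, not an openness argument for $L$.
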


\begin{proof}
By \cref{cor:ICT}, \cref{prop:algorithms,cor:algorithms} (or \cref{prop:retraction,cor:retraction} for the retraction-based case), it follows that $\curr$ converges to an \ac{ICT} set of \eqref{eq:RMD} \acl{wp1}.
Moreover, by Sard's theorem \citep{sternberg1999lectures}, the set of critical values of $\obj$ has empty interior, so Proposition 6.4 of \citet{Ben99} implies that every \ac{ICT} of \eqref{eq:RMD} is contained in a set of critical points of $\obj$ on which $\obj$ is constant.
Our assertion then follows by combining the above.
\end{proof}

\Cref{prop:opt} contains as a special case the analysis of \citet{Bon13} who established the almost sure convergence of \cref{alg:RSGM} (and its retraction-based variants) under the assumptions that
\begin{enumerate*}
[\upshape(\itshape a\upshape)]
\item
$\curr$ is precompact (\cref{asm:compact});
\item
the injectivity radius of $\points$ is uniformly bounded from below (\cref{asm:injectivity});
and
\item
the oracle $\orcl(\point;\seed)$ is uniformly bounded in both $\point\in\points$ and $\seed\in\seeds$, \ie $\ess\sup_{\point,\seed}\norm{\orcl(\point;\seed)}_{\point} < \infty$.
\end{enumerate*}
In this regard, \cref{prop:opt} not only relaxes the bounded oracle requirement of \citet{Bon13}, but it provides a straightforward way to establish the convergence of a wide array of Riemannian algorithms which cannot otherwise be covered by the tailor-made analysis of \citet{Bon13}.
We are not aware of a comparable convergence result for \crefrange{alg:RPPM}{alg:ROG} (or their retraction-based variants).

In addition, we note that \cref{prop:opt} includes the celebrated \acdef{NPG} method of \citet{kakade2001natural} for reinforcement learning \textendash\ which, as noted by \citet{Bon13}, is a particular case of retraction-based \cref{alg:RSGM}.
We stress that our result applies not only to ``vanilla'' \ac{NPG} methods, but also to its optimistic/extra-gradient variants.
To our knowledge, there are no comparable results in the \ac{NPG} literature.

\para{Games on manifolds}

We next move on to the game setting, \ie when $\points$ decomposes as a product of the form $\points = \points_{1}\times\dotsm\times\points_{\nPlayers}$ for some $\nPlayers\in\N$ and, likewise, the $\play$-th component of $\vecfield = (\vecfield_{1},\dotsc,\vecfield_{\nPlayers})$ is of the form $\vecfield_{\play} = \grad_{\point_{\play}} \pay_{\play}(\point_{1},\dotsc,\point_{\nPlayers})$, where $\pay_{\play} \from \points \to \R$ is the payoff function of the $\play$-th player \textendash\ for applications and a detailed discussion, \cf \citet{RBS16} and references therein. 

There are many solution concepts of games on manifolds that can be seen as a natural generalization of Nash equilibria. In this work, we will consider the so-called \acdefp{NSE} \cite{kristaly2014nash,MHC22}:

\begin{definition}
We say that $\sol \defeq (\eq_{1},\dotsc,\eq_{\nPlayers}) \in \points$ is a \emph{Nash–Stampacchia} equilibrium of $\vecfield$ if
\begin{equation}
\nn
\inner{\vecfield(\sol)}{\exp^{-1}_{\sol}(\point)}_{\sol}
	\defeq \insum_{\play=1}^{\nPlayers} \inner{\vecfield_i(\sol)}{\exp^{-1}_{\sol_{\play}}(\point_{\play})}_{\sol_{\play}}
	\geq 0,
	\quad
	\text{for all $\point \in \points$}.
\end{equation}
\end{definition}

Our first result below concerns the convergence of \crefrange{alg:RPPM}{alg:ROG} in a general class of (Riemannian) monotone games known as $\alpha$-accretive games \cite{wang2010monotone}:

\begin{proposition}
[Riemannian $\alpha$-accretive games]
\label{prop:monotone-games}
Let $\vecfield = [-\rgrad_{\point_i} \minmax_i] $ be an $\alpha$-accretive game field,
\ie all $r\geq 0$, we have
\begin{equation}
\nn
(1+\alpha r)\dist(\point,\pointalt) \leq \dist\left(\exp_{\ssstyle\point}(r \vecfield(\point)), \exp_{\ssstyle\pointalt}(r \vecfield(\pointalt))\right), \quad \alpha > 0.
\end{equation}
Then \crefrange{alg:RSGM}{alg:ROG}, as well as their alternating\,/\,retraction-based versions, converge to the game's set of \acp{NSE}.
\end{proposition}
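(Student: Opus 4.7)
The plan is to apply \cref{cor:ICT}, combined with \cref{prop:algorithms,prop:retraction,prop:alternation} as needed for the various algorithmic variants, to reduce the convergence of $\curr$ to the characterization of the \ac{ICT} sets of the mean dynamics \eqref{eq:RMD}. The task therefore reduces to showing that in the $\alpha$-accretive setting, the only \ac{ICT} set of \eqref{eq:RMD} is the game's \textpar{necessarily unique} \ac{NSE}.

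My first step would be to establish existence and uniqueness of a zero $\sol\in\points$ of $\vecfield$, noting that such a zero automatically satisfies the defining inner-product inequality for an \ac{NSE} since $\vecfield(\sol)=0$ trivially annihilates it. Uniqueness is immediate from accretivity: if $\point$ and $\pointalt$ are both zeros of $\vecfield$, then $\exp_{\point}(r\vecfield(\point))=\point$ and $\exp_{\pointalt}(r\vecfield(\pointalt))=\pointalt$, so the defining inequality forces $(1+\alpha r)\dist(\point,\pointalt)\leq\dist(\point,\pointalt)$ and hence $\point=\pointalt$. Existence is a standard fixed-point/compactness argument that leverages the coercivity implicit in $\alpha$-accretivity (in particular, all orbits of \eqref{eq:RMD} remain bounded, so their $\omega$-limit sets are nonempty compact invariant subsets of $\points$).

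The crux of the proof is to show that $\{\sol\}$ is the unique \ac{ICT} set of \eqref{eq:RMD}. Setting $\pointalt=\sol$ in the accretivity inequality, dividing by $r$, and passing to the first-order limit $r\to 0^{+}$, combined with the identity $\grad\dist^{2}(\cdot,\sol)=-2\exp^{-1}_{\cdot}(\sol)$ (which holds away from the cut locus of $\sol$, and globally under \cref{hyp:Hadamard}), yields a pointwise Lyapunov-type inequality relating $\inner{\vecfield(\point)}{\exp^{-1}_{\point}(\sol)}_{\point}$ to $\alpha\dist(\point,\sol)^{2}$. Along any orbit $\orbit{\ctime}$ of \eqref{eq:RMD}, this translates into a differential inequality for $\ctime\mapsto\dist(\orbit{\ctime},\sol)^{2}$; a Gr\"onwall-style argument then forces every orbit to be asymptotic to $\sol$, so the only compact invariant \textendash\ and a fortiori the only \ac{ICT} \textendash\ set is $\{\sol\}$. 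Combining this with \cref{thm:APT} and the variants in \cref{prop:algorithms,prop:retraction,prop:alternation} yields $\curr\to\sol$ \acl{wp1}.

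The main technical obstacle will be the Riemannian differentiation step, since $\dist^{2}(\cdot,\sol)$ is in general only smooth outside the cut locus of $\sol$, and a naive pointwise differentiation of the accretivity inequality could fail at cut-locus points visited by an orbit of \eqref{eq:RMD}. Under \cref{hyp:Hadamard}, the cut locus is empty and the argument is straightforward. In the general case, one can either work with the Clarke subdifferential of $\dist^{2}$ and invoke a viscosity-type argument, or exploit the extended Fermi coordinate machinery developed in the proof of \cref{thm:APT} to derive the Lyapunov inequality almost everywhere along the flow, which is amply sufficient to conclude via integration. The alternating, retraction-based, and proximal variants are then subsumed uniformly, since the only input from the algorithmic side is the \ac{APT} property, which is already secured by \cref{prop:algorithms,prop:retraction,prop:alternation}.
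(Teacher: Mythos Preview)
Your overall plan is correct and would work, but it takes a genuinely different route from the paper. The paper's proof is a one-liner: it simply combines \crefrange{prop:algorithms}{prop:alternation} with the main result of \citet{kristaly2014nash}, which already establishes that in the $\alpha$-accretive setting the \ac{NSE} is the global attractor of \eqref{eq:RMD}. In other words, the paper outsources the entire \ac{ICT} characterization to an external reference, whereas you reconstruct that characterization from scratch via a squared-distance Lyapunov function. Your approach is more self-contained and makes the mechanism transparent; the paper's approach is shorter but opaque unless the reader chases the citation.

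One caution if you do carry out your sketch verbatim: with the accretivity inequality exactly as written in the statement, the map $\point\mapsto\exp_{\point}(r\vecfield(\point))$ is $(1+\alpha r)$-\emph{expansive}, so your first-variation step at $\pointalt=\sol$ yields $\langle\vecfield(\point),\exp_{\point}^{-1}(\sol)\rangle_{\point}\leq-\alpha\dist^{2}(\point,\sol)$ and hence $\frac{d}{d\ctime}\dist^{2}(\orbit{\ctime},\sol)\geq 2\alpha\dist^{2}(\orbit{\ctime},\sol)$, which is the wrong sign for Gr\"onwall to give contraction. This is a sign-convention issue (the accretive operator is $-\vecfield$ in the standard resolvent picture), and the cited reference handles it correctly; just be aware that your Lyapunov computation will need the sign flipped to close. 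Under \cref{hyp:Hadamard} the cut-locus worries you raise disappear, so once the sign is sorted your argument is clean.
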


\begin{proof}
This immediately follows by combining \crefrange{prop:algorithms}{prop:alternation} with the main result of \citep{kristaly2014nash}.
\end{proof}

To the best of our knowledge, most of the algorithms we consider are new in the setting of Riemannian monotone games except for the \emph{deterministic} gradient and extra-gradient methods \cite{tang2012korpelevich, neto2016extragradient, fan2020tseng, khammahawong2020extragradient, chen2021modified}.

While being quite general, accretivity is a strong, convexity-like assumption about the games.
In our next result, we prove general convergence for a class of \emph{non-convex} games.

\begin{proposition}
[Riemannian potential games]
\label{prop:potential-games}
Let $\vecfield = [-\rgrad_{\point_i} \minmax_i] $ be a game field associated with a Riemannian \textbf{potential} game \citep{klavins2002phase,muhammad2005decentralized}. Then \crefrange{alg:RSGM}{alg:ROG}, as well as their alternating\,/\,retraction-based versions, converge to the {critical points} of the game potential.
\end{proposition}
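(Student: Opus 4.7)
The plan is to reduce the potential-game case to the optimization setting already handled by \cref{prop:opt}. By definition of a Riemannian potential game, there exists a smooth function $\pot\from\points\to\R$ such that $\rgrad_{\point_{\play}} \minmax_{\play} = \rgrad_{\point_{\play}} \pot$ for every player $\play$, and therefore the joint game field satisfies $\vecfield = -\rgrad \pot$ on the product manifold $\points = \points_{1}\times\dotsm\times\points_{\nPlayers}$. Consequently, the associated mean dynamics \eqref{eq:RMD} is nothing but the Riemannian gradient flow of $\pot$.

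Given this identification, the first step is to invoke \cref{prop:algorithms} (or its retraction-based analogue \cref{prop:retraction}, or its alternating analogue \cref{prop:alternation}, depending on which variant is under consideration) to conclude that \cref{asm:errorbounds,asm:conjugate} hold \as. Combined with the standing structural hypotheses (either precompactness \`a la \cref{asm:compact,asm:injectivity}, or \cref{hyp:Hadamard,hyp:WC}), \cref{cor:ICT} then yields that the iterates $\curr$ converge \as to an \ac{ICT} set of \eqref{eq:RMD}.

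The second step is to identify the \ac{ICT} sets of the gradient flow $\dot\orbit{\ctime} = -\rgrad\pot(\orbit{\ctime})$. This is essentially the same argument already used in the proof of \cref{prop:opt}: by Sard's theorem, the set of critical values of $\pot$ has empty interior in $\R$, so $\pot$ serves as a strict Lyapunov function outside the critical set, and Proposition 6.4 of \citet{Ben99} (applied in the Riemannian setting exactly as in \cref{prop:opt}) forces every \ac{ICT} set of the gradient flow to be contained in a connected component of $\crit(\pot)$ on which $\pot$ is constant. Chaining this with the preceding step gives convergence of $\curr$ to a component of critical points of the potential.

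The main technical point, rather than an obstacle, is verifying that the Lyapunov argument for the gradient flow transfers cleanly to the product manifold: since $\pot$ is smooth and $\points$ is a product of complete Riemannian manifolds, one has $\ddt \pot(\orbit{\ctime}) = -\norm{\rgrad\pot(\orbit{\ctime})}_{\orbit{\ctime}}^{2}$, so $\pot$ is indeed strictly decreasing off $\crit(\pot)$, and Sard's theorem applied to $\pot\from\points\to\R$ (which requires only that $\points$ be a finite-dimensional $C^{\vdim}$-manifold and $\pot$ be $C^{\vdim}$-smooth, both of which are in force) closes the argument. The alternating and retraction-based variants follow verbatim by replacing the invocation of \cref{prop:algorithms} by the appropriate companion (\cref{prop:alternation} or \cref{prop:retraction}).
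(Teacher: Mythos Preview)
Your proposal is correct and follows essentially the same approach as the paper: reduce the potential-game field to a Riemannian gradient field $\vecfield = -\rgrad\pot$ and then invoke \crefrange{prop:algorithms}{prop:alternation} together with the Sard/Lyapunov argument from \cref{prop:opt}. The paper's own proof is a one-liner (``Simply combine \crefrange{prop:algorithms}{prop:alternation}''), so your version is just a more explicit unpacking of the same reasoning.
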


\begin{proof}
Simply combine \crefrange{prop:algorithms}{prop:alternation}.
\end{proof}

For Riemannian potential games, the convergence of the continuous-time dynamics \eqref{eq:RMD} is well known, but we are not otherwise aware of a similar result for stochastic, discrete-time \ac{RRM} methods.
Our theory bridges this gap by showing that the same guarantees are in fact achieved by a wide array of \ac{RRM} schemes \textendash\ not only by their continuous-time ancestor.

\newacro{MPG}{Markov potential game}

\para{Limit cycles in Riemannian manifolds}

We conclude this section by showing that, in complement to the pointwise convergence results above, our theory can also be used to derive convergence to limit cycles that arise in more general Riemannian settings.

\begin{example}[Cycling  of \ac{RRM} schemes]
\label{ex:cycling}
The following example is taken from \cite{drivas2021life}: Consider the vector field on $\mathbb{S}^2 \coloneqq \setdef{\point\in\R^3}{\point^2_1 + \point_2^2 + \point_3^3 = 1}$, defined by
\begin{equation}
\label{eq:life-death}
\vecfield(\point) = \begin{bmatrix}
-\point_2 \\ \point_1 \\ 0
\end{bmatrix} + \left(\point_3^2 - \frac{1}{4} \right)
\begin{bmatrix}
\point_1\point_3 \\ \point_2\point_3 \\ -\point_1^2-\point_2^2
\end{bmatrix}.
\end{equation}

Then it is known that the associated \eqref{eq:RMD} \emph{cycles} in the sense that the \ac{ICT} sets of $\vecfield$ in \eqref{eq:life-death} contain attracting periodic orbits; see \cref{fig:sphere}. Our \crefrange{prop:algorithms}{prop:alternation} then imply that any \ac{RRM} scheme driven by $\vecfield$ also cycles. To the best of our knowledge, this is the first rigorous example of a cycling problem for Riemannian stochastic approximation in the literature.
\end{example}


\begin{figure}[tbp]
\centering
\includegraphics[height=1.2in]{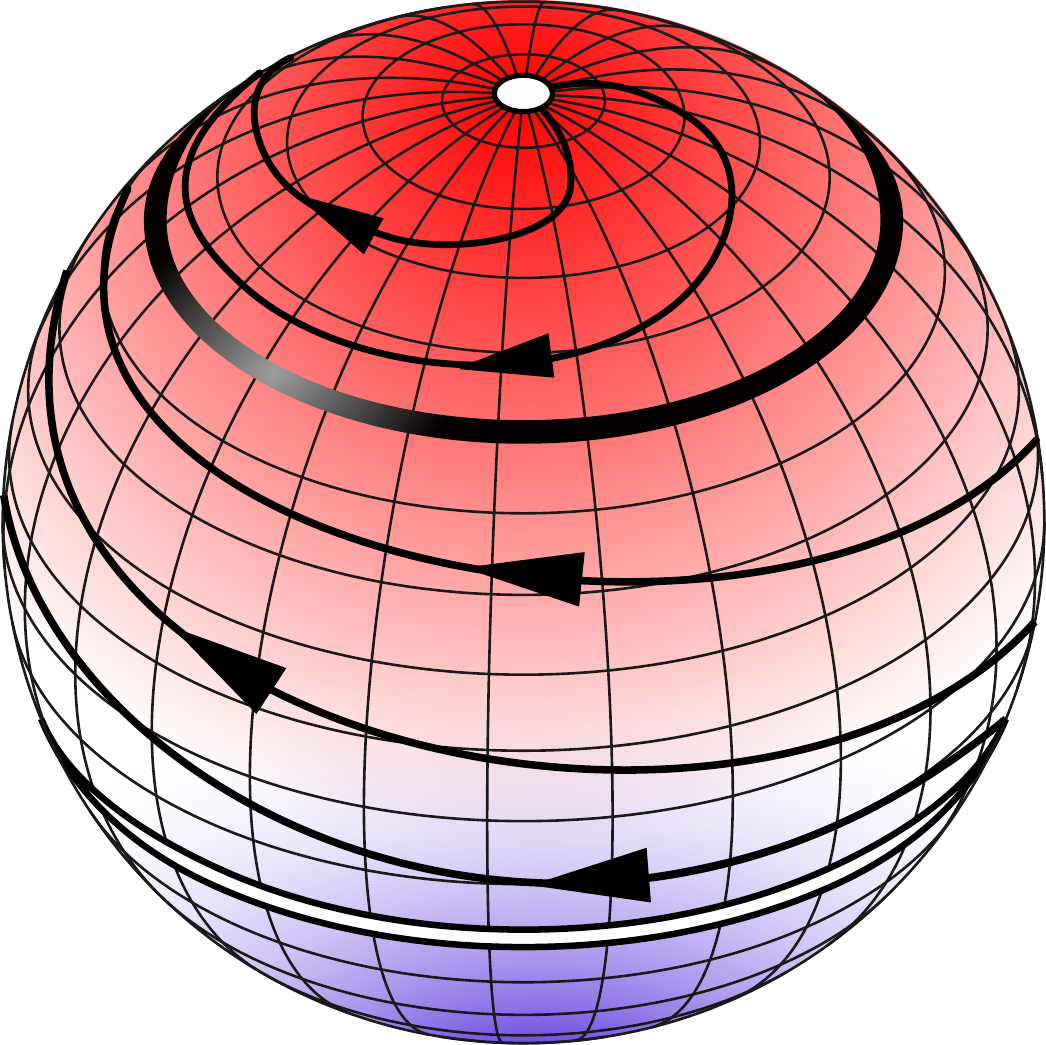}
\caption{The flow of \eqref{eq:RMD} induced by the game field \eqref{eq:life-death}.}
\label{fig:sphere}
\vspace{-2ex}
\end{figure}


\section{Concluding remarks}
\label{sec:discussion}

Our theory provides a unified analysis for the convergence of \acl{RRM} schemes that might seem vastly different from each other at first sight;
instead, by verifying certain simple criteria on the error terms $\curr[\error]$ in \cref{asm:errorbounds}, our analysis shows that we can study the deterministic dynamics \eqref{eq:RMD} to directly infer the algorithm's long-run behavior.
At the same time, our results offer but a glimpse of the flexibility of \eqref{eq:RRM}, and several important research directions remain open:
\begin{enumerate}
\item
In many applications (especially to game theory and sequential online learning), oracle access to $\vecfield$ may be out of reach, and one would need to employ \emph{zeroth-order} \textendash\ or \emph{bandit} \textendash\ optimization methods.
It this case, a key question that arises is whether a Riemannian \acli{KW} algorithm can be analyzed as a special case of \eqref{eq:RRM} \textendash\ and, if so, whether there are any fundamental differences relative to the Euclidean setting.
\item
The diminishing step-size assumption is indispensable for our analysis, which covers many practically relevant settings.
However, another common strategy involves \emph{constant} step-sizes, and this is not covered by our theory.
It would thus be interesting to see if the techniques for analyzing constant step-size \acs{SA} schemes \citep{kushner1981asymptotic,KY97} can be generalized to a manifold setting.
\item
Finally, several Riemannian algorithms are known to escape undesirable solutions \citep{criscitiello2019efficiently,sun2019escaping}.
We conjecture that the general avoidance theory of \cite{Pem90,BH95,BD96,HMC21} can be likewise extended to Riemannian manifolds;
if true, this would imply that many iterative Riemannian methods (including retraction-based ones) converge \acl{wp1} \emph{only} to local minimizers.
\end{enumerate}
We defer the study of the above questions to future work.

\section*{Acknowledgments}
\begingroup
\small
%
%
This research was supported by the SNSF grant 407540\_167212 through the NRP 75 Big Data program, and by the European Research Council (ERC) under the European Union's Horizon 2020 research and innovation programme grant aggreement No 815943. YPH acknowledges funding through an ETH Foundations of Data Science (ETH-FDS) postdoctoral fellowship. Part of this work was done while PM was visiting the Simons Institute for the Theory of Computing. PM is also grateful for financial support by the French National Research Agency (ANR) in the framework of the “Investissements d’avenir” program (ANR-15-IDEX-02), the LabEx PERSYVAL (ANR-11-LABX-0025-01), MIAI@Grenoble Alpes (ANR-19-P3IA-0003), and the grant ALIAS (ANR-19-CE48-0018-01).
\endgroup

\numberwithin{lemma}{section}		
\numberwithin{corollary}{section}		
\numberwithin{proposition}{section}		
\numberwithin{equation}{section}		
\appendix

\section{Stability analysis}
\label{app:stability}

The purpose of this appendix is to prove \cref{prop:bounded}.
To that end, let $\base\in \points$ be as in \eqref{eq:WC},
define the radial distance function $\radial(\point) \defeq \dist(\point, \base)$,
and
let $k(\point) \defeq \radial^{2}(\point)/2$. The following theorem makes it
clear under which assumptions $k$ is smooth and provides a control on its Hessian.

\begin{theorem}[{\citet[Theorem 6.6.1]{jost2017riemannian}}]
With notation as above,
suppose that the exponential map $\exp_{\base}$ is a diffeomorphism on $\setdef{\vvec\in\tspace}{\norm{\vvec}_{\point} \leq \rho}$.
Moreover, suppose that the sectional curvature of $\points$ is nonpositive and bounded below by $-\kappa^{2}$ on a geodesic ball of radius $\rho>0$ centered at $\base$. 
Then $k$ is smooth on the punctured ball $\ball_{\rho}^{\ast}(\base) \defeq \setdef{\point\neq \base}{\dist(\point,\base) \leq \rho}$ and, in particular,
\begin{equation}
\grad k(\point)
	= - \log_{\base}(\point)
\end{equation}
In addition, $\norm{\grad k(\point)} = \radial(\point)$, and
\begin{equation}
\Hess k(\point)[v,v]
	\leq \kappa \radial(\point) \ctgh(\kappa \radial(\point)) \norm{v}_{\point}^{2},
	\quad
	\text{for all $\point \in \ball_{\rho}^{\ast}(\base)$ and $\vvec \in \tspace$}.
\end{equation}
\end{theorem}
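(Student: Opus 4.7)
The plan is to work in normal coordinates centered at $\base$, exploit the Gauss lemma to identify the gradient, and then reduce the Hessian bound to the classical Hessian comparison theorem via Jacobi fields.

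First, since $\exp_{\base}$ is a diffeomorphism on the tangent ball of radius $\rho$, normal coordinates around $\base$ are well-defined throughout $\ball_{\rho}(\base)$. By the Gauss lemma, $\radial^{2}(\point) = \sum_{i} (\point^{i})^{2}$ in these coordinates, so $k = \radial^{2}/2$ is a smooth polynomial on the entire ball $\ball_{\rho}(\base)$, and \emph{a fortiori} on the punctured ball $\ball_{\rho}^{\ast}(\base)$. For the gradient, I would fix $\point \in \ball_{\rho}^{\ast}(\base)$ and let $\curve(t) = \exp_{\base}(tv)$ with $v = \log_{\base}(\point)$, so that $\curve(1) = \point$ and $\norm{v} = \radial(\point)$. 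The Gauss lemma identifies $\grad\radial$ with the unit radial tangent field along $\curve$, and then $\grad k(\point) = \radial(\point)\grad\radial(\point)$ is the tangent vector at $\point$ of length $\radial(\point)$ pointing away from $\base$ along the radial geodesic, which is $-\log_{\base}(\point)$ under the standard identification. The norm identity $\norm{\grad k(\point)} = \radial(\point)$ is then immediate.

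For the Hessian bound, I would decompose an arbitrary $\vvec \in \tspace$ as $\vvec = \vvec_{r} + \vvec_{\perp}$, where $\vvec_{r}$ is parallel to $\grad\radial(\point)$ and $\vvec_{\perp}$ is orthogonal to it. Using that $\grad\radial(\point)$ is a unit vector lying in the kernel of $\Hess\radial(\point)$ (because $\radial$ is arc-length along the radial geodesic), a short computation gives
\begin{equation*}
\Hess k(\point)[\vvec,\vvec] = \norm{\vvec_{r}}_{\point}^{2} + \radial(\point)\,\Hess\radial(\point)[\vvec_{\perp},\vvec_{\perp}].
\end{equation*}
The key ingredient is then the Hessian comparison theorem: since the sectional curvature on $\ball_{\rho}(\base)$ is bounded below by $-\kappa^{2}$, comparison with the space form of constant curvature $-\kappa^{2}$ yields $\Hess\radial(\point)[\vvec_{\perp},\vvec_{\perp}] \leq \kappa\,\ctgh(\kappa\radial(\point))\,\norm{\vvec_{\perp}}_{\point}^{2}$. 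Combining this with the elementary inequality $x\,\ctgh(x) \geq 1$ for $x>0$ \textendash\ which absorbs the radial term $\norm{\vvec_{r}}^{2}$ into the same prefactor \textendash\ delivers the claimed bound.

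The main technical obstacle is the Hessian comparison step itself. The standard route is to represent $\Hess\radial(\point)[\vvec_{\perp},\vvec_{\perp}]$ through a Jacobi field $J$ along $\curve$ with $J(0)=0$ and $J(1)=\vvec_{\perp}$, compare $\norm{J}$ pointwise with its counterpart in the model space of curvature $-\kappa^{2}$ via Rauch's theorem (which crucially needs only the lower curvature bound $\sec \geq -\kappa^{2}$ and \emph{no} upper bound), and then extract the Hessian estimate via the index form. Everything else in the proof is routine bookkeeping on top of this classical comparison result.
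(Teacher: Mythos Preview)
The paper does not prove this statement at all: it is quoted verbatim as \citet[Theorem 6.6.1]{jost2017riemannian} and used as a black box in the proof of \cref{lem:lyap-smoothness}. There is therefore no ``paper's own proof'' to compare against.

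That said, your argument is correct and is essentially the standard textbook route (and, in particular, the one Jost takes). The decomposition $\Hess k[\vvec,\vvec] = \norm{\vvec_{r}}^{2} + \radial\,\Hess\radial[\vvec_{\perp},\vvec_{\perp}]$ via $\Hess k = d\radial\otimes d\radial + \radial\,\Hess\radial$, followed by the Hessian comparison theorem under the lower curvature bound $\sec\geq -\kappa^{2}$ and the absorption of the radial term through $x\ctgh x\geq 1$, is exactly how this result is obtained. Two minor remarks: first, the gradient identity as written in the paper has $-\log_{\base}(\point)$, but since $\grad k(\point)\in\tspace[\point]$, the intended object is $-\log_{\point}(\base)$ (as the paper itself writes elsewhere when discussing \eqref{eq:WC}); your identification via the radial field is the right one. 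Second, the nonpositive-curvature hypothesis is not used in your upper Hessian bound \textendash\ it enters only to guarantee that $\exp_{\base}$ is a diffeomorphism on arbitrarily large balls and to supply the companion \emph{lower} Hessian bound in Jost's full statement, neither of which is needed here.
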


\begin{remark*}
Since $\points$ is simply connected and complete \textendash\ by \cref{hyp:Hadamard} \textendash\ we may take $\rho = \infty$ in the theorem above \citep[Corollary 6.9.1]{jost2017riemannian}.
\end{remark*}

\newcommand{\uboundfder}{C_{1}}
\newcommand{\uboundsder}{C_{2}}
\newcommand{\lyapsmooth}{C}

Now, to proceed, recall that $\vecfield$ satisfies the weak coercivity condition \eqref{eq:WC} if, for some $\coer>0$, we have
\begin{equation}
\inner{\vecfield(\point)}{\grad k(\point)}_{\point}
	\leq 0
\end{equation}
whenever $\point\in \points$ lies outside a geodesic $\ball_{\coer}(\base)$ contained in the interior of $\points$.
Our proof relies on constructing a suitable ``energy function'' that
serves as an easy-to-control proxy for the distance of the iterates of \eqref{eq:RRM} from the origin. This function will be of the form 
\begin{equation}
\lyap(\point)
	= f(\radial(\point)) 
\end{equation}
where $f$ is a $\mathcal{C}^\infty$ non-negative function with $f(x) = 0$ for all $x\leq
\coer$ and satisfies
\begin{equation}\label{eq:lyap-f}
    0 \leq f'(x) \leq \uboundfder , \quad f''(x) \leq \uboundsder
\end{equation}
for all $x \geq \coer$. Moreover, we require $f(x) = \Omega(x)$ as $x\to \infty$ so that controlling $f$ implies control of $x$ (for a concrete example of such a function, \cf \cref{lem:smooth-lyap-example}).
Our first result is that $\lyap = f\circ\radial$ has a bounded Hessian and is smooth.

\begin{lemma}
\label{lem:lyap-smoothness}
Let $\lyap$ be defined as above.
Then $\lyap$ is negatively correlated with $\vecfield$ in the sense that
\begin{equation}
\inner{\grad \lyap(\point)}{\vecfield(\point)}_{\point}
	\leq 0
	\quad
	\text{for all $\point \in \points$}.
\end{equation}
Moreover, there exists a constant $\lyapsmooth > 0$ such that $\Hess \lyap(\point)[v,v] \leq \lyapsmooth\norm{v}_{\point}^{2}$, and hence
\begin{equation}
\label{eq:lyap-smoothness}
\lyap(\pointalt)
	\leq \lyap(\point)
		+ \inner{\grad \lyap(\point)}{\log_{\point}(\pointalt)} 
		+ \frac{C}{2} \dist^{2}(\point, \pointalt)
	\quad
	\text{for all $\point,\pointalt \in \points$}.
\end{equation}
\end{lemma}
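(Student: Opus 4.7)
The proof decomposes cleanly into four steps, exploiting the fact that $f$ vanishes identically on $[0,\coer]$: the issues that would otherwise arise at the cut locus / origin of the radial coordinate are absorbed into the region where $\lyap$ is the zero function.

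First, I will show that $\lyap\equiv 0$ on the closed ball $\ball_{\coer}(\base)$. Since $f$ is $C^\infty$ and $f\equiv 0$ on $[0,\coer]$, all derivatives of $f$ vanish at $\coer$ (from both sides). Hence $\lyap=f\circ\radial$ is smooth even at $\base$ (where $\radial$ itself fails to be smooth) and even across $\partial\ball_{\coer}(\base)$; moreover $\grad\lyap\equiv 0$ and $\Hess\lyap\equiv 0$ on $\ball_{\coer}(\base)$, settling both conclusions inside the ball. Outside, on $\points\setminus\ball_{\coer}(\base)$, the function $\radial$ is smooth because $\points$ is Hadamard (so $\exp_{\base}$ is a diffeomorphism), and the chain rule gives
\begin{equation*}
\grad\lyap(\point)=f'(\radial(\point))\grad\radial(\point),\qquad
\Hess\lyap(\point)=f''(\radial(\point))\,d\radial\otimes d\radial+f'(\radial(\point))\Hess\radial(\point).
\end{equation*}

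Second, for the negative correlation I use the identity $\grad k=\radial\grad\radial$ (so $\|\grad\radial\|=1$) together with the assumed \eqref{eq:WC}: for $\radial(\point)>\coer$,
\begin{equation*}
\inner{\grad\lyap(\point)}{\vecfield(\point)}_{\point}
=\frac{f'(\radial(\point))}{\radial(\point)}\,\inner{\grad k(\point)}{\vecfield(\point)}_{\point}\leq 0,
\end{equation*}
since $f'\geq 0$ and the inner product on the right is nonpositive by \eqref{eq:WC}.

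Third, for the Hessian bound I differentiate the identity $\grad k=\radial\grad\radial$ to obtain $\Hess k=d\radial\otimes d\radial+\radial\Hess\radial$, so that
\begin{equation*}
\Hess\radial(\point)[v,v]
=\frac{1}{\radial(\point)}\Bigl(\Hess k(\point)[v,v]-\inner{\grad\radial}{v}^2\Bigr)
\leq \kappa\,\ctgh(\kappa\,\radial(\point))\,\|v\|_{\point}^{2},
\end{equation*}
after invoking the cited Hessian comparison with $\kappa^{2}=|K_{\textup{low}}|$ and discarding the nonpositive term. Because $\ctgh$ is decreasing on $(0,\infty)$, the right-hand side is uniformly bounded by $\kappa\,\ctgh(\kappa\coer)$ on the region $\radial\geq\coer$, so using \eqref{eq:lyap-f} and $\inner{\grad\radial}{v}^{2}\leq\|v\|^{2}$ I obtain
\begin{equation*}
\Hess\lyap(\point)[v,v]\leq\bigl(\uboundsder+\uboundfder\,\kappa\,\ctgh(\kappa\coer)\bigr)\|v\|_{\point}^{2}
\eqqcolon\lyapsmooth\,\|v\|_{\point}^{2}.
\end{equation*}
Together with the vanishing inside the ball, this gives the claimed global bound.

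Fourth, to upgrade the Hessian bound to the Taylor-type inequality \eqref{eq:lyap-smoothness} I use that $\points$ is Hadamard, so there is a unique minimizing geodesic $\curve\from[0,1]\to\points$ with $\curve(0)=\point$, $\curve(1)=\pointalt$ and $\|\dot\curve\|\equiv\dist(\point,\pointalt)$. The scalar function $\varphi(s)\defeq\lyap(\curve(s))$ satisfies $\varphi''(s)=\Hess\lyap(\curve(s))[\dot\curve(s),\dot\curve(s)]\leq \lyapsmooth\,\dist^{2}(\point,\pointalt)$, and the Taylor formula with integral remainder gives $\varphi(1)\leq\varphi(0)+\varphi'(0)+\tfrac{\lyapsmooth}{2}\dist^{2}(\point,\pointalt)$, which is exactly \eqref{eq:lyap-smoothness} since $\varphi'(0)=\inner{\grad\lyap(\point)}{\log_{\point}(\pointalt)}$.

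\textbf{Main obstacle.} The only delicate point is to make sure the Hessian comparison (which is only valid on $\points\setminus\{\base\}$ and carries a factor of $\ctgh(\kappa\radial)$ that blows up as $\radial\to 0$) can be patched into a uniform estimate over all of $\points$. This is exactly what the support condition on $f$ is designed for: the singular region $\radial<\coer$ is one where $\lyap$, $\grad\lyap$ and $\Hess\lyap$ are identically zero, so no estimate is needed there, and on $\radial\geq\coer$ the quantity $\ctgh(\kappa\radial)$ is bounded. Once this is observed, everything else is a routine chain-rule/Taylor computation.
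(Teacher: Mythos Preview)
Your proof is correct and follows essentially the same strategy as the paper: compute $\grad\lyap$ explicitly and invoke \eqref{eq:WC}; bound $\Hess\lyap$ via the chain rule and the Jost comparison theorem for $\Hess k$; then obtain \eqref{eq:lyap-smoothness} by Taylor expansion along the (unique, since $\points$ is Hadamard) minimizing geodesic.

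The only noteworthy organizational difference is in the Hessian step. The paper writes $\grad\lyap=\tfrac{f'(\radial)}{\radial}\grad k$ and applies the product rule, which produces a term involving $\grad\bigl(f'(\radial)/\radial\bigr)$ that then has to be estimated; combined with the crude bound $x\ctgh x\le 1+x$, this yields the constant $\lyapsmooth=\uboundsder+\uboundfder/\coer+\uboundfder\kappa$. You instead differentiate $f\circ\radial$ directly, derive $\Hess\radial$ from $\Hess k$, and exploit the monotonicity of $\ctgh$ on $[\kappa\coer,\infty)$ to get $\lyapsmooth=\uboundsder+\uboundfder\,\kappa\ctgh(\kappa\coer)$. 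Your route is slightly cleaner and gives a sharper (though still explicit) constant; otherwise the two arguments are the same.
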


\begin{proof}
We begin by noting that the gradient of $\lyap$ is given by
\begin{equation}
\grad\lyap(\point)
	= \begin{cases}
    	0
			&\quad
			\text{if $\radial(\point) \leq \coer$},
		\\
        \frac{f'(\radial(\point))}{\radial(\point)}\grad k(\point)
        	&\quad
			\text{if $\radial(\point) > \coer$}.
    \end{cases}
\end{equation}
By assumption, $f'(\radial(\point))/\radial(\point) \geq 0$ so $\inner{\grad \lyap(\point)}{\vecfield(\point)}_{\point}$ has the same sign as $\inner{\grad k(\point)}{\vecfield(\point)}_{\point}$ if $\radial(\point) > \coer$ and is otherwise zero if $\radial(\point) \leq \coer$.
We thus conclude that $\lyap$ and $\vecfield$ are negatively correlated, as claimed.

Now, to compute the Hessian of $\lyap$, notice that $\Hess \lyap(\point)[v,v] = \inner{\grad_v
\grad \lyap(\point)}{v}_{\point}$. Hence,
\begin{align}
\Hess \lyap(\point)[v,v]
	&=\grad_v \frac{f'(\radial(\point))}{\radial(\point)} \cdot \inner{\grad k(\point)}{v}_{\point} 
		+ \frac{f'(\radial(\point))}{\radial(\point)} \inner{\grad_v \grad k(\point)}{v}_{\point}
	\notag\\
	&= \underbrace{\inner*{\grad \frac{f'(\radial(\point))}{\radial(\point)}}{v} \cdot \inner{\grad k(\point)}{v}_{\point}}_{\termi}
		+ \underbrace{\frac{f'(\radial(\point))}{\radial(\point)} \, \Hess k(\point)[v,v]}_{\termii}.
\end{align}
Here we use the same notation for directional derivative of a scalar function
and the covariant derivative.
With this in mind, the first step in computing $\termi$ is the observation that
\begin{equation}
\grad \frac{f'(\radial(\point))}{\radial(\point)}
	= \left(f''(\radial(\point)) - \frac{f'(\radial(\point))}{\radial(\point)}\right)\frac{1}{\radial^{2}(\point)} \grad k(\point),
\end{equation}
and hence
\begin{equation}
\termi
	= \left(f''(\radial(\point)) - \frac{f'(\radial(\point))}{\radial(\point)}\right)\frac{1}{\radial^{2}(\point)}\inner{\grad k(\point)}{v}_{\point}^{2}
	\leq \frac{\uboundsder}{\radial^{2}(\point)} \norm{\grad k(\point)}_{\point}^{2} \norm{v}_{\point}^{2}
	= \uboundsder\, \norm{v}_{\point}^{2}.
\end{equation}
For $\termii$, as $x\ctgh x \leq 1 + x$ for $x\geq 0$, we obtain
\begin{equation}
\termii
	\leq \frac{f'(\radial(\point))}{\radial(\point)} (1 + \kappa \radial(\point)) \norm{v}_{\point}^{2}
	\leq \uboundfder \left(1/\coer + \kappa\right) \norm{v}_{\point}^{2}.
\end{equation}
Summing up everything, we obtain
\begin{equation}
\label{eq:Hess-upper}
\Hess \lyap(\point)[v,v]
	\leq (\uboundsder + \uboundfder/\coer + \uboundfder \kappa)\norm{v}_{\point}^{2}
	\eqdef \lyapsmooth\norm{v}_{\point}^{2},
\end{equation}
that is, $\lyap$ has bounded Hessian. Moreover, $\lyap$ is smooth as a composition of smooth
functions. Let $\point, \pointalt \in \points$ be arbitrary, and let
$\gamma:[0,1]\to \points$ be a geodesic connecting the two. By Taylor's remainder
theorem, there exists some $t\in(0,1)$ such that
\begin{equation}
\lyap(\pointalt)
	= \lyap(\point)
		+ \inner{\grad \lyap(\point)}{\dot{\gamma}(0)}_{\point}
		+ \frac{1}{2} \Hess \lyap(\gamma(t))[\dot{\gamma}, \dot{\gamma}].
\end{equation}
Thus, invoking \eqref{eq:Hess-upper} and noting that $\norm{\dot{\gamma}} = \dist(\point, \pointalt)$ and $\dot{\gamma}(0) = \log_{\point}(\pointalt)$, we obtain \eqref{eq:lyap-smoothness} and our proof is complete.
\end{proof}

We now proceed to the main argument, where we show how to use $\lyap$ to control the
iterates $\curr$. Letting $\curr[\lyap] = \lyap(\curr)$ and using
\cref{lem:lyap-smoothness} we get
\begin{align}
\label{eq:Lyap-bound1}
\next[\lyap] &= \lyap\left(\exp_{\curr}(\curr[\step] \curr[\signal])\right) \notag \\
	&\leq \curr[\lyap]
		+ \curr[\step] \inner{\grad \lyap(\curr)}{\curr[\signal]}_{\curr}
		+ \frac{\lyapsmooth \curr[\step]^{2}}{2} \norm{\curr[\signal]}_{\curr}^{2}
	\notag\\
	&\leq \curr[\lyap]
		+ \curr[\step] \inner{\grad \lyap(\curr)}{\curr[\noise] + \curr[\bias]}_{\curr}
	+ \frac{3 C \curr[\step]^{2}}{2}
    \bracks[\big]{\twonorm{ \vecfield(\curr)}_{\curr}^{2} + \twonorm{\curr[\noise]}_{\curr}^{2} +
\twonorm{\curr[\bias]}_{\curr}^{2}}
\end{align}
where the second line follows from the negative correlation of $\lyap$ and $\vecfield$, the
definition \eqref{eq:signal} of $\curr[\signal]$, and the Cauchy-Schwarz inequality.
Conditioning on $\curr[\filter]$ and taking expectations, and invoking Cauchy-Schwarz and the fact that
$\norm{\grad \lyap(\curr)} \leq \frac{\uboundfder}{\radial(\curr)} \norm{\grad k(\curr)} = \uboundfder$, we obtain:
\begin{equation}
\label{eq:Lyap-bound2}
\exof{\next[\lyap] \given \curr[\filter]}
	\leq \curr[\lyap] +  \curr[\step]\uboundfder \curr[\bbound]
		+ \tfrac{3}{2}\lyapsmooth \curr[\step]^{2} \bracks[\big]{\vbound^{2}
		+ \curr[\bbound]^{2}
		+ \curr[\sdev]^{2}},
\end{equation}
where we have bounded the second moments by their respective upper bounds.

To proceed, let $\curr[\eps] = \curr[\step]\uboundfder \curr[\bbound] + (3/2)\lyapsmooth \curr[\step]^{2} \bracks[\big]{\vbound^{2} + \curr[\bbound]^{2} + \curr[\sdev]^{2}}$ denote the ``residual'' term in \eqref{eq:Lyap-bound2}.
Notice that
\begin{equation}
\sum_{\run=\start}^{\infty} \curr[\eps]
	\leq \uboundfder\sum_{\run=\start}^{\infty} \curr[\step]\curr[\bbound]
		+ \frac{3\lyapsmooth}{2} \sum_{\run=\start}^{\infty} \curr[\step]^{2} (\vbound^{2} + \curr[\bbound]^{2} + \curr[\sdev]^{2}),
\end{equation}
and hence, by \cref{asm:errorbounds} and the dominated convergence theorem, we infer that  $\exof{\sum_{\run} \curr[\eps]} < \infty$.

Next, consider the auxiliary process $\curr[\lyapalt] = \curr[\lyap] + \exof{\sum_{\runalt=\run}^{\infty} \iter[\eps] \given \curr[\filter]}$, adapted to the same filtration.
By \eqref{eq:Lyap-bound2}, we have $\exof{\next[\lyapalt] \given \curr[\filter]}
\leq \curr[\lyap] + \exof{\sum_{\runalt=\run}^{\infty} \iter[\eps] \given \curr[\filter]}= \prev[\lyapalt]$, \ie
$\curr[\lyapalt]$ is a supermartingale with respect to $\curr[\filter]$.
This shows that $\exof{\curr[\lyapalt]} \leq \exof{\init[\lyapalt]} < \infty$, \ie $\curr[\lyapalt]$ is uniformly bounded in $L^{1}$.
Hence, by Doob's supermartingale convergence theorem \citep[Theorem~2.5]{HH80}, it follows that $\curr[\lyapalt]$ converges \acl{wp1} to some finite random limit $\lyapalt_{\infty}$.
In turn, since $\sum_{\run} \curr[\eps] < \infty$ \as, this implies that
$\curr[\lyap] = \curr[\lyapalt] - \exof{\sum_{\runalt=\run}^{\infty} \curr[\eps] \given \curr[\filter]}$ also converges to some (random) finite limit \as.
From this and the fact that $\curr[\lyap] = \Omega(\radial(\curr))$, we deduce $\limsup_{\run} \radial(\curr) < \infty$ as claimed.

\begin{lemma}
\label{lem:smooth-lyap-example}
Let $h\from \R \to \R$ be the function
\begin{equation}
h(x)
	= \begin{cases}
    	0
			&\quad
			\text{if $x \leq 0$}
		\\
		\frac{e^{-1/x}}{e^{-1/x}+e^{-1/(1-x)}}
			&\quad
			\text{if $x\in(0,1)$}
		\\
		1
			&\quad
			\text{otherwise}
		\end{cases}
\end{equation}
and, for $\coer>0$, let
\(
f(x)
	= \int_{0}^{x} h(s-\coer) \dd s.
\)
Then $f$ is $C^\infty$ and it satisfies the conditions \eqref{eq:lyap-f} with $\uboundfder = 1$ and $\uboundsder = 2$.
In addition, one has $f(x) \geq x - (R+1)$, and hence $f(x) = \Omega(x)$.
\end{lemma}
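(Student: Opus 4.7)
The plan is to verify each claim in turn, recognising $h$ as the classical $C^{\infty}$ ``bump transition''. The function $h$ is smooth on each of the three open pieces $(-\infty,0)$, $(0,1)$ and $(1,\infty)$, and all derivatives of $e^{-1/x}$ (resp.\ $e^{-1/(1-x)}$) vanish at $x = 0^{+}$ (resp.\ $x = 1^{-}$), so $h$ glues smoothly with the constant values $0$ on $(-\infty,0]$ and $1$ on $[1,\infty)$. Consequently $f(x) = \int_{0}^{x} h(s-\coer)\dd s$ is $C^{\infty}$ with $f'(x) = h(x-\coer)$ and $f''(x) = h'(x-\coer)$, and the bounds $0 \leq f'(x) \leq 1 = \uboundfder$ are immediate from $0 \leq h \leq 1$.

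For the linear lower bound $f(x) \geq x - (\coer + 1)$, I would split into three cases: for $x \leq \coer$, $f(x) = 0 > x - \coer - 1$; for $\coer \leq x \leq \coer+1$, $f(x) \geq 0 \geq x - \coer - 1$; and for $x \geq \coer + 1$, using $h(s-\coer) = 1$ on $[\coer+1, x]$, one has $f(x) \geq \int_{\coer+1}^{x} \dd s = x - \coer - 1$. The same calculation gives $f(x) = x + \bigoh(1)$ as $x \to \infty$, so $f(x) = \Omega(x)$.

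The main obstacle is the second-derivative bound $f''(x) \leq 2 = \uboundsder$, equivalently $\sup_{x \in (0,1)} h'(x) \leq 2$ (outside $(0,1)$, $h'$ vanishes). Direct differentiation gives
\begin{equation*}
h'(x)
    = \frac{g(x)\tilde g(x)}{\bigl(g(x)+\tilde g(x)\bigr)^{2}}
        \left(\frac{1}{x^{2}} + \frac{1}{(1-x)^{2}}\right),
\end{equation*}
with $g(x) = e^{-1/x}$ and $\tilde g(x) = e^{-1/(1-x)}$. Substituting $a = 1/x$ and $b = 1/(1-x)$ (which satisfy $1/a + 1/b = 1$, equivalently $a + b = ab$) and clearing the overall factor $e^{-(a+b)}$, the inequality $h'(x) \leq 2$ becomes $a^{2} + b^{2} \leq 4 + 4\cosh(a - b)$.

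I would then parametrise via $s = a + b = ab$ and $u = \lvert a-b \rvert = \sqrt{s^{2} - 4s}$, so that $s = 2 + \sqrt{4 + u^{2}}$ and $a^{2} + b^{2} = s^{2} - 2s = u^{2} + 4 + 2\sqrt{4 + u^{2}}$, reducing the claim to the single-variable inequality
\begin{equation*}
\Psi(u)
	\defeq u^{2} + 2\sqrt{4 + u^{2}} - 4\cosh(u)
	\leq 0
	\qquad (u \geq 0).
\end{equation*}
One checks that $\Psi(0) = \Psi'(0) = 0$ and $\Psi''(u) = 2 + 8/(4 + u^{2})^{3/2} - 4\cosh(u) \leq -1$ uniformly (using $\cosh u \geq 1$ and $(4+u^{2})^{3/2} \geq 8$), so $\Psi$ is strictly concave with vanishing value and derivative at the origin; hence $\Psi \leq 0$ throughout. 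Equality is attained at $x = 1/2$ (that is, $u = 0$), where a direct computation gives $h'(1/2) = 2$, confirming that the constant $\uboundsder = 2$ is sharp.
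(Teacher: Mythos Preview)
Your proof is correct and follows the same overall structure as the paper's: both verify $f'\in[0,1]$ from $0\le h\le 1$, bound $f''$ via $\sup h'$, and obtain the linear lower bound by integrating $h\equiv 1$ on $[\coer+1,x]$.

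The only substantive difference is that the paper dispatches the key estimate $h'(x)\le 2$ in one line (``by a straightforward computation, one observes that the first derivative of $h$ is bounded as $0\le h'(x)\le 2$''), whereas you actually carry out this computation. Your substitution $a=1/x$, $b=1/(1-x)$ reducing to the single-variable inequality $\Psi(u)=u^{2}+2\sqrt{4+u^{2}}-4\cosh u\le 0$, together with the concavity argument $\Psi''(u)\le 2+8/8-4=-1$, is clean and correct, and your observation that equality holds at $x=1/2$ confirms that the constant $\uboundsder=2$ is sharp --- a point the paper does not mention. So your argument is strictly more complete than the paper's on this step, at the cost of some extra length.
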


\begin{proof}
As $h(x) \in [0,1]$, we obtain that $f'(x) \in [0,1]$. 
By a straightforward computation, one observes that the first derivative of $h$ is bounded as $0 \leq h'(x) \leq 2$, so
\(
f''(x) = h'(x-\coer) \leq 2.
\)
Then, to complete our proof, simply notice that, for $x \geq R+1$, we have $f(x) = \int_0^x h(s-R) \dd s \geq \int_{R+1}^x 1 \dd s = x-(R+1)$, as claimed.
\end{proof}

\section{Geometric preliminaries for the proof of \cref{thm:APT}}
\label{app:prelim}

In this appendix, we collect the necessary technical prerequisites for the proof of \cref{thm:APT}.

\subsection{The \acl{PFS}}
\label{app:pcoord}

We begin with the definition of the frame system that we use to compare vectors on different tangent spaces.
To that end, fix any two points $\point, \pointalt \in \points$, and consider two arbitrary vectors $\vvec \in \tspace$ and $\vvecalt \in \tspace[\pointalt]$.
There is a convenient frame system (\ie a set of bases for $\tspace$ and $\tspace[\pointalt]$) for comparing $\vvec$ and $\vvecalt$, defined as follows: Pick an arbitrary orthonormal frame $\braces*{\bvec_\coord}_{\coord=1}^\vdim$ for $\tspace$.
Since the parallel transport map is an isometry, the vectors $\braces{\bvec'_\coord}_{\coord=1}^\vdim \defeq \braces*{ \ptrans{\point}{\pointalt}{\bvec_\coord} }_{\coord=1}^\vdim$ form an orthonormal basis for $\tspace[\pointalt]$.
Now consider the components of $\vvec, \vvecalt$ in these two frames, namely
\begin{align}
\label{eq:parallel}
	\vvec_{\coord}^{\pcoord}
		\defeq \inner{\vvec}{ \bvec_\coord}_\point
	\;
	\text{and}
	\;
	\vvecalt_{\coord}^{\pcoord}
		\defeq \inner{\vvecalt}{ \bvec'_\coord}_{\pointalt}
	\quad
	\text{for all $\coord=1,\dotsc,\vdim$}.
\end{align}
We shall call \eqref{eq:parallel} the \emph{\acl{PFS}} for vectors $\vvec$ and $\vvecalt$ (the dependence on the initial frame $\braces*{\bvec_\coord}_{\coord=1}^\vdim$ is suppressed).

By virtue of parallel transport, in the \acl{PFS} we have
\begin{align}
\label{eq:comp_in_pcoord}
\ptrans{\point}{\pointalt}{\vvec}_{\coord}^{\pcoord}
	=  \inner{\ptrans{\point}{\pointalt}{\vvec}}{\bvec'_\coord}_{\pointalt}
	= \inner{\vvec}{ \ptrans{\pointalt}{\point}{\bvec'_\coord}}_{\point}
	= \vvec_{\coord}^{\pcoord}.
\end{align}
In the same vein, we have $\ptrans{\pointalt}{\point}{\vvecalt}_{\coord}^{\pcoord} = \vvecalt_{\coord}^{\pcoord}$.
Thus, since $\braces*{\bvec_\coord}_{\coord=1}^\vdim$ is orthonormal, we may write:
\begin{equation}
\norm{\vvecalt - \ptrans{\point}{\pointalt}{\vvec} }_\point
	= \norm{  \vvec - \ptrans{\pointalt}{\point}{\vvecalt} }_{\pointalt}
	= \norm{ \vvec^{\pcoord} - \vvecalt^{\pcoord}}_{2}.
\end{equation}
In other words, in the \acl{PFS}, the comparison of vectors living on different tangent spaces is reduced to simply comparing the Euclidean norms of their components.
For instance, the $\lips$-Lipschitzness of $\vecfield$ can be rephrased as
\begin{equation} 
\label{eq:vec_in_pcoord}
\norm{\vecfield^{\pcoord}(\pointalt) - \vecfield^{\pcoord}(\point) }_{2}
	\leq \lips \dist(\point, \pointalt)
	\quad
	\text{for all $\point,\pointalt\in\points$}.
\end{equation}

\subsection{The \acl{FCS}}
\label{app:fcoord}

For any $h$, let $\nbhd_h \subset \tspace[\apt{\ctime+h}] \simeq \R^d$ be a neighborhood of 0 on which the mapping 
\begin{equation}
\label{eq:normal_nbhd}
\exp_{\apt{\ctime+h}} \from \nbhd_h \to \points
\end{equation}
is a diffeomorphism between $\nbhd_h$ and $\exp_{\apt{\ctime+h}} \left( \nbhd_h \right)$.
It is well-known that such a neighborhood exists, and that the exponential map $\exp_{\apt{\ctime+h}}$ along with an arbitrary orthonormal frame at $\tspace[\apt{\ctime+h}]$ induces a local coordinate system on $\exp_{\apt{\ctime+h}} \left( \nbhd_h \right)$, called the \emph{normal coordinate frame with center $\apt{\ctime+h}$} \citep{Lee97}.
Normal coordinates are best-suited for comparing \emph{distances} of points on manifolds:
for instance, if $\fpoint'$ is the normal coordinate of $\pointalt$ with center $\point$, then $\dist(\point,\pointalt) = \norm{\fpoint'}_{2}$.

The \acdef{FCS} \citep{manasse1963fermi}, roughly speaking, is a system of normal coordinates ``along a curve''.
To define it, fix an arbitrary orthonormal frame $\braces*{\bvec_\coord(0)}_{\coord=1}^\vdim$ for $\tspace[\apt{\ctime}]$.
We can obtain a system of orthonormal frames $\braces*{\bvec_\coord(h)}_{\coord=1}^\vdim$ by parallel transporting $\braces*{\bvec_\coord(0)}_{\coord=1}^\vdim$ from $\tspace[\apt{\ctime}]$ to $\tspace[\apt{\ctime+h}]$ along the curve $h \mapsto\apt{\ctime + h}$.
Let $\nbhd_h \subset \tspace[\apt{\ctime+h}]$ be a neighborhood of 0 defined as in \eqref{eq:normal_nbhd}, and set $\fnbhd \defeq \union_h \braces{\exp_{\apt{\ctime+h}} \left( \nbhd_h \right)} \subset \points$.
Finally, consider the mapping 
\begin{equation}
\label{eq:fdiffeo}
\fdiffeo\from\R_{+} \times \fnbhd \to \R_{+} \times \R^\vdim
\end{equation}
that sends a point $(h,\point) \in \R_{+} \times \fnbhd$ to $(h,\fpoint) \in \R_{+} \times \R^d$, where $\fpoint$ is the normal coordinate of $\point$ with center $\apt{\ctime+h}$ and frame $\braces*{\bvec_\coord(h)}_{\coord=1}^\vdim$.
By virtue of the normal coordinates, we know that $\fdiffeo$ is a diffeomorphism between $\R_{+} \times \fnbhd$ and a neighborhood of $\R_{+} \times \{0\}$.
The mapping $\fdiffeo$ and its inverse is called the \acli{FCS} along the curve $h \mapsto \apt{\ctime+h}$.
In the sequel, we will abuse the notation and simply call it the Fermi coordinates along $\state$.

The following property of the \acl{FCS} plays a key role in our analysis and we will use it freely:
\begin{lemma}[\citefull{takahashi1981probability}; \citefull{fujita1982onsager}]
\label[lemma]{lem:fcoord}
Let $\curve$ be a differentiable curve on $\points$ such that $\curve(h) \in \nbhd_h$ for all $h \in \R_{+}$, and let $\fcurve$ be the curve of $\curve$ in the \ac{FCS} along $\state$ (\ie $(h,\fcurve(h)) = \fdiffeo(h, \curve(h))$.
Then
\begin{equation}
\dot{\fcurve}_{\coord}(h)
	= \dot{\curve}_{\coord} (h) - \dot{\state}_{\coord} (\ctime+h)
		+ \bigoh\parens[\big]{\norm{\fcurve(h)}_{2}^{2}}
\end{equation}
where $\dot{\state}_{\coord}(\ctime+h) \defeq \inner{\dot{\state}(\ctime+h)}{ \bvec_\coord(h)}_{\apt{\ctime+h}}$ is the $\coord$-th component of $\dot{\state}(\ctime+h)$ in the frame $\braces*{\bvec_\coord(h)}_{\coord=1}^\vdim$,
and
$\dot{\curve}_{\coord}(h)$ is the $\coord$-th component of $\dot{\curve}(h)$ in the \textpar{possibly non-orthonormal} frame induced by the normal coordinate system with center $\apt{\ctime+h}$ and frame $\braces*{\bvec_\coord(h)}_{\coord=1}^\vdim$.
\end{lemma}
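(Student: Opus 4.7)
The plan is to turn the coordinate identity into a statement about derivatives of the Fermi chart and then to harvest the quadratic remainder from the fact that \emph{normal} coordinates are flat to first order at their origin. Set $\phi_h \from \nbhd_h \to \points$ to be the normal chart centered at $\apt{\ctime+h}$ in the frame $\braces*{\bvec_\coord(h)}_{\coord=1}^\vdim$, so that $\phi_h(y) = \expof{\apt{\ctime+h}}{\sum_i y_i \bvec_\coord(h)}$ and, by the very definition of the extended Fermi coordinates, $\curve(h) = \phi_h(\fcurve(h))$. Chain rule then yields
\begin{equation}
\label{eq:pfs-chain}
\dot{\curve}(h)
	= (\partial_h \phi)(h,\fcurve(h))
		+ D\phi_h(\fcurve(h))[\dot{\fcurve}(h)],
\end{equation}
where $(\partial_h\phi)$ denotes the partial derivative with $y$ held fixed. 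By definition of the coordinate frame, $D\phi_h(\fcurve(h))[e_\coord] = \partial/\partial x^\coord\vert_{\curve(h)}$, so applying $D\phi_h(\fcurve(h))^{-1}$ to \eqref{eq:pfs-chain} gives
\begin{equation}
\label{eq:pfs-pre}
\dot{\fcurve}_\coord(h)
	= \dot{\curve}_\coord(h)
		- \bigl[D\phi_h(\fcurve(h))^{-1}\,(\partial_h\phi)(h,\fcurve(h))\bigr]_{\coord}.
\end{equation}

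The core of the proof is to show that the bracketed term on the \ac{RHS} of \eqref{eq:pfs-pre} equals $\dot{\state}_\coord(\ctime+h) + \bigoh(\norm{\fcurve(h)}_{2}^{2})$. Both $(\partial_h\phi)(h,y)$ and $D\phi_h(y)[e_\coord]$ admit clean Jacobi-field descriptions along the geodesic $s\mapsto \phi_h(sy)$. Writing $Y(h)=\sum_i y_i\bvec_\coord(h)$, the key observation is that $(\partial_h\phi)(h,y)$ is the value at $s=1$ of the Jacobi field with $J(0)=\dot{\state}(\ctime+h)$ and $D_s J(0)=0$ (the latter because $\bvec_\coord(h)$ is parallel along $\state$), while $D\phi_h(y)[e_\coord]$ is the value at $s=1$ of the Jacobi field with vanishing initial value and initial covariant derivative $\bvec_\coord(h)$. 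Denoting by $P_y$ the parallel transport from $\apt{\ctime+h}$ to $\phi_h(y)$ along $s\mapsto \phi_h(sy)$, the standard Taylor expansions for Jacobi fields then give
\begin{align}
(\partial_h\phi)(h,y)
	&= P_y\bigl[\dot{\state}(\ctime+h)\bigr] - \tfrac{1}{2}\, P_y\bigl[R(\dot{\state}(\ctime+h),Y)Y\bigr] + \bigoh(\norm{y}_{2}^{3}),
	\\
D\phi_h(y)[e_\coord]
	&= P_y[\bvec_\coord(h)] - \tfrac{1}{6}\, P_y[R(\bvec_\coord(h),Y)Y] + \bigoh(\norm{y}_{2}^{4}).
\end{align}
Since $P_y$ is an isometry and $\braces*{\bvec_\coord(h)}$ is orthonormal, inverting the second display against the first shows that solving $(\partial_h\phi)(h,y) = \sum_\coord \xi_\coord\,D\phi_h(y)[e_\coord]$ for the coefficients $\xi_\coord$ yields $\xi_\coord = \inner{\dot{\state}(\ctime+h)}{\bvec_\coord(h)}_{\apt{\ctime+h}} + \bigoh(\norm{y}_{2}^{2}) = \dot{\state}_\coord(\ctime+h) + \bigoh(\norm{y}_{2}^{2})$.

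Substituting $y = \fcurve(h)$ into \eqref{eq:pfs-pre} and using the previous estimate gives the claimed identity. The only nontrivial step is the quadratic remainder in the Jacobi expansions; everything else is bookkeeping. The main obstacle is to verify that the linear-in-$y$ correction truly drops out, which is precisely the reason normal coordinates are used: the connection coefficients vanish at $y=0$, so $D\phi_h(y)$ and $P_y$ agree to first order in $y$, and the first-order term in $(\partial_h\phi)$ matches with the first-order change in the chart itself.
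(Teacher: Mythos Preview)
The paper does not supply its own proof of this lemma: it is quoted with attribution to Takahashi (1981) and Fujita and Kotani (1982) and used as a black box in the proof of \cref{thm:APT}. Your sketch is therefore not competing against anything in the paper itself, and as a self-contained argument it is correct. The chain-rule decomposition, the identification of $(\partial_h\phi)(h,y)$ and $D\phi_h(y)[e_\coord]$ with the values at $s=1$ of Jacobi fields having the stated initial data (the crucial observation being $D_sJ(0)=0$ for the former, which is exactly where the parallelism of $\{\bvec_\coord(h)\}$ along $\state$ is used), and the inversion against the orthonormal parallel frame all go through.

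Two cosmetic remarks. First, the summation index in your definition of $\phi_h$ is mismatched (you write $\sum_i y_i\,\bvec_\coord(h)$). Second, the remainder in the second Jacobi expansion is generically $\bigoh(\norm{y}_2^{3})$ rather than $\bigoh(\norm{y}_2^{4})$, since the quartic term in the Taylor series of the Jacobi field already involves $\nabla R$; this has no effect on the conclusion, as only the leading $\bigoh(\norm{y}_2^{2})$ correction is needed.
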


\subsection{Comparing the differential of $\exp$ and parallel transport}
\label{app:compare}

As will become clear in the proof, the \ac{PFS} is convenient for comparing \emph{vectors} at different points, whereas the \ac{FCS} is best suitable for comparing the \emph{distance} between curves, both features being essential to our proof.
There is, however, a dichotomy: It is known that if the Fermi coordinate in \eqref{eq:fdiffeo} is simultaneously a \ac{PFS} for all points nearby the curve $\state(\cdot)$, then the underlying manifold $\points$ must be flat; \ie the Riemannian curvature tensor vanishes everywhere \citep{iliev2006handbook}.

Therefore, we would like to work with parallel frame and Fermi coordinate systems separately, and compare the difference between the two whenever needed.
To this end, we will need the following technical lemma, whose proof can be found in \citep[Theorem 3.12]{lezcano2020curvature} or \citep[Proposition A.1]{criscitiello2020accelerated}:

\begin{lemma}[Comparing $\drm\exp$ and parallel transport]
\label[lemma]{lem:dexp_vs_pt}
Let $\points$ be a Riemannian manifold whose sectional curvatures are in the interval $[K_\textup{low}, K_\textup{up}]$, and let $K = \max\left(\lvert K_\textup{low}\rvert, \lvert K_\textup{up}\rvert \right)$.
For $\vvec \in \tspace$, consider the geodesic $\curve(t) = \exp_{\point}(t\vvec)$.
If $\curve$ is defined and has no interior conjugate point on the interval $[0,1]$, then
\begin{equation}
\label{eq:dexp_vs_pt}
\forall \vvecalt \in \tspace, \quad\quad \norm{T_\vvec(\vvecalt) - \Gamma_\vvec(\vvecalt)}_{\curve(1)} \leq K \cdot f_{K_\textup{low}}( \norm{\vvec}_\point )\cdot\norm{\vvecalt_\perp}_{\point}
\end{equation}where $\vvecalt_\perp \defeq \vvecalt - \frac{\inner{\vvec}{\vvecalt}_\point}{\inner{\vvec}{\vvec}_\point}\vvec$ is the component of $\vvecalt$ orthogonal to $\vvec$, $T_\vvec = \drm\exp_\point(\vvec)$ is the differential of the exponential map, and $\Gamma_\vvec$ denotes the parallel transport along $\curve$ from $\curve(0)$ to $\curve(1)$.
The function $f_{K_\textup{low}}$ in \eqref{eq:dexp_vs_pt} is defined as
\begin{equation}
\label{eq:fdef}
f_{K_\textup{low}}(a)
	= \begin{cases}
		\frac{r^2}{6}
			&\quad
			\text{if $K_\textup{low} = 0$},
		\\
		r^2\left( 1- \frac{\sin({a}/{r})}{{a}/{r}} \right)
			&\quad
			\text{if $K_\textup{low} = \frac{1}{r^2} > 0$},
		\\
		r^2\left( \frac{\sinh({a}/{r})}{{a}/{r}}-1 \right)
			&\quad
			\text{if $K_\textup{low} = - \frac{1}{r^2} < 0$}.
	\end{cases}
\end{equation}
Moreover, the function $f_{K_\textup{low}}$ is dominated by the case $K_\textup{low} < 0$: For all $K \geq \abs{K_\textup{low}}$ and $a \in \R_{+}$, 
\begin{equation}
\label{eq:fKbound}
f_{K_\textup{low}}(a) \leq f_{-K}(a).
\end{equation}
\end{lemma}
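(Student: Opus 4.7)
The plan is to identify $T_{\vvec}(\vvecalt)$ with a Jacobi field along $\curve(t)=\exp_{\point}(t\vvec)$ and reduce the bound to a Rauch-type ODE comparison. The field $J(t)\defeq d\exp_{\point}(t\vvec)[t\vvecalt]$ is the unique Jacobi field along $\curve$ with $J(0)=0$, $\nabla_{t}J(0)=\vvecalt$, and $J(1)=T_{\vvec}(\vvecalt)$. Writing $J(t)=\Pi_{t}W(t)$, where $\Pi_{t}\from\tspace\to\tspace[\curve(t)]$ denotes parallel transport along $\curve$ and $W(t)\in\tspace$, the Jacobi equation becomes the linear second-order ODE on $\tspace$
\begin{equation*}
W''(t)+\mathcal{A}(t)W(t)=0,\qquad W(0)=0,\quad W'(0)=\vvecalt,
\end{equation*}
with $\mathcal{A}(t)V\defeq\Pi_{t}^{-1}\bigl(R(\Pi_{t}V,\dot\curve(t))\dot\curve(t)\bigr)$ self-adjoint on $\tspace$. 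Since $\Gamma_{\vvec}(\vvecalt)-T_{\vvec}(\vvecalt)=\Pi_{1}(\vvecalt-W(1))$ and $\Pi_{1}$ is an isometry, the statement reduces to bounding $\norm{\vvecalt-W(1)}_{\point}$.

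Next I would isolate the orthogonal component. The curvature identities $\inner{R(X,\dot\curve)\dot\curve}{\dot\curve}=0$ and $\Pi_{t}\vvec=\dot\curve(t)$ yield $\mathcal{A}(t)\vvec=0$, and self-adjointness then forces $\mathcal{A}(t)$ to preserve $\{\vvec\}^{\perp}$. Splitting $\vvecalt=\vvecalt_{\parallel}+\vvecalt_{\perp}$ and using uniqueness for the ODE gives $W=W_{\parallel}+W_{\perp}$ with $W_{\parallel}(t)=t\vvecalt_{\parallel}$, so $W_{\parallel}(1)=\vvecalt_{\parallel}$ contributes nothing to the error and
\begin{equation*}
\norm{\Gamma_{\vvec}(\vvecalt)-T_{\vvec}(\vvecalt)}_{\curve(1)}=\norm{\vvecalt_{\perp}-W_{\perp}(1)}_{\point},
\end{equation*}
which is precisely why only $\norm{\vvecalt_{\perp}}_{\point}$ appears on the right of \eqref{eq:dexp_vs_pt}.

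The last step is a curvature comparison for $W_{\perp}$. Since the eigenvalues of $\mathcal{A}(t)$ on $\{\vvec\}^{\perp}$ equal $\norm{\vvec}^{2}$ times the sectional curvatures of $2$-planes containing $\dot\curve(t)$, they lie in $[K_{\textup{low}}\norm{\vvec}^{2},K_{\textup{up}}\norm{\vvec}^{2}]$. In the scalar constant-curvature model $\phi''+k\phi=0$ with $\phi(0)=0$, $\phi'(0)=1$, elementary integration yields exactly the deviations appearing in \eqref{eq:fdef}, and the monotonicity \eqref{eq:fKbound} is then the statement that the scalar deviation is monotone in $-k$. Writing $E(t)\defeq W_{\perp}(t)-t\vvecalt_{\perp}$, one checks that $E$ satisfies the inhomogeneous ODE $E''(t)+\mathcal{A}(t)E(t)=-t\,\mathcal{A}(t)\vvecalt_{\perp}$ with $E(0)=E'(0)=0$; a Rauch-type index-form argument then dominates $\norm{E(1)}_{\point}$ by the corresponding constant-curvature deviation, delivering the bound $K\,f_{K_{\textup{low}}}(\norm{\vvec}_{\point})\norm{\vvecalt_{\perp}}_{\point}$, with the prefactor $K$ coming from $\norm{\mathcal{A}(t)\vvecalt_{\perp}}_{\point}\leq K\norm{\vvec}_{\point}^{2}\norm{\vvecalt_{\perp}}_{\point}$ acting on the inhomogeneous term.

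The main obstacle is precisely this last comparison: $\mathcal{A}(t)$ is time-varying and non-commuting, so Rauch's theorem does not apply verbatim to the deviation $E$. I would circumvent this by diagonalizing $\mathcal{A}(t)$ pointwise on $\{\vvec\}^{\perp}$ and applying the scalar comparison component by component against the worst-case curvature, or, equivalently, by running a Gronwall argument on the energy $\norm{E}_{\point}^{2}+\norm{E'}_{\point}^{2}$ with the coefficient dominated by the spectral bound $K\norm{\vvec}_{\point}^{2}$. The no-interior-conjugate-point hypothesis enters here to ensure that the model-space Jacobi fields remain non-vanishing on $(0,1]$, so that the comparison is non-degenerate and produces the stated constant.
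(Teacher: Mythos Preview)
The paper does not prove this lemma; it simply cites \emph{Theorem~3.12 of Lezcano-Casado (2020)} and \emph{Proposition~A.1 of Criscitiello--Boumal (2020)} and moves on. So there is no in-paper argument to compare against, only the approach taken in those references.

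Your Jacobi-field setup is the right one and matches what the cited references do: identify $T_{\vvec}(\vvecalt)$ with $J(1)$ for the Jacobi field $J(t)=d\exp_{\point}(t\vvec)[t\vvecalt]$, pull everything back to $\tspace$ via parallel transport, and split off the tangential component, which contributes nothing. The reduction to bounding $\norm{\vvecalt_{\perp}-W_{\perp}(1)}_{\point}$ is correct and is exactly how the references proceed.

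Where your sketch wobbles is in the final comparison, and you flag this yourself. The two workarounds you suggest do not quite land: pointwise diagonalization of $\mathcal{A}(t)$ fails because the eigenbasis rotates with $t$, so there is no consistent scalar decomposition; and a crude Gr\"onwall on $\norm{E}^{2}+\norm{E'}^{2}$ will produce an exponential-in-$\norm{\vvec}$ bound rather than the sharp $f_{K_{\textup{low}}}$. The references instead use the integral representation
\[
W_{\perp}(1)-\vvecalt_{\perp}=-\int_{0}^{1}(1-s)\,\mathcal{A}(s)W_{\perp}(s)\,ds,
\]
bound $\norm{\mathcal{A}(s)}\leq K\norm{\vvec}^{2}$ for the operator norm, and then control $\norm{W_{\perp}(s)}$ by the classical Rauch comparison against the constant-curvature-$K_{\textup{low}}$ model (this is where the no-conjugate-point hypothesis and the lower curvature bound enter). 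Integrating the resulting scalar bound over $s$ produces exactly the functions in \eqref{eq:fdef}, and the prefactor $K$ comes from the operator-norm bound on $\mathcal{A}$, as you anticipated. So your outline is sound; only the mechanism for the last step needs to be replaced by the integral form plus Rauch on $\norm{W_{\perp}}$.
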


%

\section{Proofs for \cref{sec:applications}}
\label{app:applications}


\subsection{Proof of \cref{prop:algorithms}}
\label{app:algorithms}


We prove \cref{prop:algorithms} in this appendix. To this end, we first provide a convenient lemma which shows that, almost surely, the effect of the noise is asymptotically annihilated by the step-size:
\begin{lemma}
\label[lemma]{lem:vanishing-noise}
Under the assumptions in \cref{prop:algorithms}, we have, with probability 1,
\begin{equation}
\label{eq:vanishing-noise}
\lim_{\run \to \infty} \norm*{\curr[\step]  \orcl(\curr; \curr[\seed])   }_{\ssstyle \curr}  = 0.
\end{equation}
\end{lemma}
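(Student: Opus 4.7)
The plan is to split $\orcl(\curr;\curr[\seed]) = \vecfield(\curr) + \err(\curr;\curr[\seed])$ via \eqref{eq:SFO} and bound each piece separately. By the triangle inequality,
\[
\norm{\curr[\step]\,\orcl(\curr;\curr[\seed])}_{\curr} \leq \curr[\step]\norm{\vecfield(\curr)}_{\curr} + \curr[\step]\norm{\err(\curr;\curr[\seed])}_{\curr},
\]
so it suffices to show that each of these two nonnegative scalar sequences tends to $0$ almost surely.

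The deterministic piece is essentially immediate: recall from the ``basic notions and notation'' paragraph that $\vecfield$ is assumed uniformly bounded with $\vbound = \sup_{\point}\norm{\vecfield(\point)}_{\point} < \infty$, and the step-size schedule \eqref{eq:step} gives $\curr[\step]\to 0$ as $\run\to\infty$. Hence $\curr[\step]\norm{\vecfield(\curr)}_{\curr} \leq \curr[\step]\,\vbound \to 0$ deterministically. The main content of the lemma therefore lies in controlling the noise term.

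For the stochastic piece, my approach is to combine Markov's inequality with the first Borel--Cantelli lemma. Fix an auxiliary threshold $\eta>0$. The conditional second-moment bound in \eqref{eq:err} together with Markov's inequality and the tower property yields
\[
\prob\parens*{\curr[\step]\norm{\err(\curr;\curr[\seed])}_{\curr} > \eta} \leq \frac{\curr[\step]^{2}\,\exof{\norm{\err(\curr;\curr[\seed])}^{2}_{\curr}}}{\eta^{2}} \leq \frac{\curr[\step]^{2}\,\sdev^{2}}{\eta^{2}}.
\]
The upper bound on $\curr[\step]$ in \eqref{eq:step} gives $\curr[\step]^{2} = \bigoh\parens{1/(\run(\log\run)^{1+2\eps})}$, and the integral test shows that this series is summable. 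Borel--Cantelli then yields $\limsup_{\run}\curr[\step]\norm{\err(\curr;\curr[\seed])}_{\curr}\leq\eta$ almost surely, and intersecting the probability-one events along a countable null sequence $\eta_{k}\downarrow 0$ delivers $\lim_{\run}\curr[\step]\norm{\err(\curr;\curr[\seed])}_{\curr} = 0$ \as

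The main subtlety to watch for \textendash\ rather than a genuine obstacle \textendash\ is that the noise vectors $\err(\curr;\curr[\seed])$ live in \emph{different} tangent spaces at each $\run$, which precludes a naive Euclidean-style summation $\sum_\run \curr[\step]\err(\curr;\curr[\seed])$ without first specifying a scheme of parallel transport to some common base point. However, the quantity of interest is the \emph{scalar} $\norm{\curr[\step]\,\orcl(\curr;\curr[\seed])}_{\curr}$, and the Borel--Cantelli route handles one tangent space at a time, sidestepping any such complication. This is also why the stronger $L^{4}$ condition used elsewhere in \cref{app:applications} is not needed here: $L^{2}$ noise and the summability $\sum_{\run}\curr[\step]^{2}<\infty$ (which is a direct consequence of \eqref{eq:step}, and in fact already a component of \cref{asm:step}) are enough.
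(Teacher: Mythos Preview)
Your proof is correct and follows essentially the same approach as the paper: split via \eqref{eq:SFO}, dispose of the deterministic part using $\curr[\step]\to 0$ and $\vbound<\infty$, and control the noise part by Chebyshev/Markov plus Borel--Cantelli. The only cosmetic difference is that the paper applies Chebyshev to $\norm{\err(\curr;\curr[\seed])}_{\curr}$ with the growing threshold $\sqrt{\run\log^{1+\eps/2}\run}$ (obtaining the explicit rate $\curr[\step]\norm{\err(\curr;\curr[\seed])}_{\curr}=\bigoh(1/\log^{\eps/4}\run)$, which is reused later in \cref{app:retraction}), whereas you apply it directly to $\curr[\step]\norm{\err(\curr;\curr[\seed])}_{\curr}$ with a fixed threshold $\eta$ and then let $\eta\downarrow 0$.
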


\begin{proof}

By definition,
\begin{align}
\nn
\norm*{\curr[\step]  \orcl(\curr; \curr[\seed])   }_{\ssstyle \curr} &= \norm*{\curr[\step]  \vecfield(\curr) + \err(\curr;\curr[\seed])  }_{\ssstyle \curr} \\
\label{eq:hh}
&\leq \curr[\step] \vbound + \curr[\step] \norm*{\err(\curr;\curr[\seed])}_{\ssstyle \curr}.
\end{align}The first term goes to 0 by choice of step-sizes. To control the second term, note that by Chebyshev's inequality and \eqref{eq:err}, we have
\begin{equation}
\label{eq:noise-threshold}
\prob\left( {\norm{\err(\curr;\curr[\seed])}_{\ssstyle \curr} \geq \noiselevel}\right)
	\leq \frac{\sdev^2}{\run \log^{1+\frac{\epsilon}{2}} \run}
\end{equation}
where $\eps$ is the same as in our choice of step-size in \cref{prop:algorithms}.
In turn, this implies that 
\[
\sum_{\run = 2}^\infty\prob\left( {\norm{\err(\curr;\curr[\seed])}_{\ssstyle \curr} \geq \noiselevel}\right) < \infty
\]
so, by the Borel-Cantelli lemma, we have $\norm{\err(\curr;\curr[\seed])}_{\ssstyle \curr} = \bigoh\left( \noiselevel \right)$ with probability $1$.
Hence, by our assumptions for the method's step-size, we get
\begin{equation}
\nn
\curr[\step] \norm{\err(\curr;\curr[\seed])}_{\ssstyle \curr}
	= \bigoh\parens*{\frac{\noiselevel}{ \sqrt{\run \log^{1+\eps} \run} }}
	= \bigoh\parens*{\frac{1}{    \log^{\frac{\eps}{4}}\run  } }
\end{equation}
which, combined with \eqref{eq:hh}, implies \eqref{eq:vanishing-noise}.
\end{proof}
We are now ready for the full proof. 




\medskip
The second claim of \cref{prop:algorithms} is a direct consequence of \cref{lem:vanishing-noise} and \cref{asm:injectivity}.
As for the first, note that $\sum_n \curr[\step]^2  < \infty$ by our choice of step-sizes so \cref{asm:step} holds by construction and we are left to establish the summability conditions \eqref{eq:errorbounds} of \cref{asm:errorbounds}. To this end, by \cref{asm:step}, \eqref{eq:err}, and \cref{lem:vanishing-noise}, it suffices to show that $\norm{\curr[\bias]}_{\ssstyle\curr} = \bigoh(\curr[\step]\cdot \text{noise})$ where ``noise'' is a query from \eqref{eq:SFO} at the appropriate state.
We proceed method-by-method:

\para{\cref{alg:RSGM}: \Acl{RSGM}}

For \eqref{eq:RSGM}, we have $\curr[\error] = \curr[\noise] = \err(\curr;\curr[\seed])$ and $\curr[\bias] = 0$, so \eqref{eq:errorbounds} follows from the stated assumptions for \eqref{eq:SFO}.

\para{\cref{alg:RPPM}: \Acl{RPPM}}

For \eqref{eq:RPPM}, we have $\curr[\noise] = 0$ and 
\begin{align}
\norm{\curr[\bias]}_{\ssstyle\curr}
	&= \norm{  \ptrans{\next}{\curr}{\vecfield(\next)} - \vecfield(\curr) }_{\ssstyle\curr}
	\notag\\
	&\leq \lips  \dist \parens*{ \curr , \next  }
	= \curr[\step] \lips \norm{  \vecfield(\curr) }_{\ssstyle \curr}
	\leq \curr[\step] \lips M
	= \bigoh ( \curr[\step])
\end{align}
where we have used the $\lips$-Lipschitzness and $\vbound$-boundedness of $\vecfield$, and the distance-minimizing property of $\exp$ within the injectivity radius.

\para{\cref{alg:RSEG}: \Acl{RSEG}}

For the \ac{RSEG} algorithm, the recurrence \eqref{eq:RSEG} gives $\curr[\noise] = \ptrans{\ssstyle\lead}{\ssstyle\curr}{ \err(\lead;\lead[\seed])}$ so 
\begin{equation}
\exof[\big]{ \norm*{\curr[\noise]}^2_{\ssstyle\curr}}
	= \exof[\big]{\norm*{ \err(\lead;\lead[\seed]) }^2_{\ssstyle\lead}}
	\leq \sdev^{2}
\end{equation}
by \eqref{eq:err} and the fact that the parallel transport map is a linear isometry. 
Finally, for the second part of \eqref{eq:errorbounds}, the definition of \eqref{eq:RSEG} yields
\begin{equation}
\norm{\curr[\bias]}
	= \norm{ \ptrans{\ssstyle\lead}{\ssstyle\curr}{   \vecfield(\lead)}  - \vecfield(\curr)}_{\ssstyle\curr}
	\leq \lips \dist\parens*{\lead, \curr}
	= \curr[\step] \lips\norm{\orcl(\curr;\curr[\seed])}_{\ssstyle \curr}
\end{equation}
so our claim follows from the assumptions on $\orcl$.

\para{\cref{alg:ROG}: \Acl{ROG}}

For the \acs{ROG} algorithm, the recurrence \eqref{eq:ROG} gives
$\curr[\noise] = \ptrans{\ssstyle\lead}{\ssstyle\curr}{\err(\curr;\lead[\seed])}$
and
$\curr[\bias] = \ptrans{\ssstyle\lead}{\ssstyle\curr}{   \vecfield(\lead)}  - \vecfield(\curr)$,
so \cref{asm:errorbounds} can be checked exactly as in the case of \cref{alg:RSEG} above.


\subsection{Proof of \cref{prop:retraction}}
\label{app:retraction}

By definition, $\retrbase_{\point}(\vvec)$ is a smooth map and hence satisfies $\lim_{\vvec\to 0} \retrbase_{\point}(\vvec) = \point $. Then \cref{lem:vanishing-noise} readily implies that $\next$ lies in the injectivity radius of $\curr$ with probability 1 for $\run$ large enough.



We first consider the retraction-based \cref{alg:RSGM}:
\begin{equation}
\label{eq:retraction-sgd}
\begin{alignedat}{2}
\next
	&= \retrbase_{\curr}(\curr[\step] \orcl(\curr;\curr[\seed])).
\end{alignedat}
\end{equation}
Let $\curr[\tilde{\vecfield}] \in\tspace[\curr]$ be the vector such that $\exp_{\ssstyle \curr}\parens*{  \curr[\step]  \curr[\tilde{\vecfield}] } = \next$, \ie 
\begin{equation}
\label{eq:retrinverse}
\curr[\step]  \curr[\tilde{\vecfield}] = \exp^{-1}_{\ssstyle \curr} \Big( \retrbase_{\curr}(\curr[\step] \orcl(\curr;\curr[\seed]))\Big).
\end{equation}
Then \eqref{eq:retraction-sgd} is an \ac{RRM} scheme with $\curr[\error] = \curr[\tilde{\vecfield}] - \vecfield(\curr)$ where $\curr[\tilde{\vecfield}]$ is defined in \eqref{eq:retrinverse}. We will show that, under the assumption $\exof{\norm{\err(\point;\seed)}^{4}_\point}< \infty$, the following holds with probability 1:
\begin{align}
\label{eq:rtr-error-bound}
\curr[\bias] = \exof*{\curr[\error] \given \curr[\filter] }\to 0, \quad \sup_{\run}\exof*{\norm{\curr[\error]}^2_{\ssstyle\curr} } < \infty
\end{align}which obviously implies \eqref{eq:errorbounds}.

\renewcommand{\curve}{c}

Consider the curve $\curve(\ctime) \defeq \retrbase_{\curr}(\ctime \orcl(\curr;\curr[\seed]))$. By \cref{lem:vanishing-noise}, for $\run$ large enough, $\curve(\ctime)$ lies in the injectivity radius of $\curr$ almost surely for all $ \ctime \in [0,\curr[\step]] $. Let $\hat{\curve}(\ctime)$ be the smooth curve of $\curve(\ctime)$ in the normal coordinate with base $\curr$ and an arbitrary orthonormal frame, and let $\hat{\state}_{\run+1}$ be the normal coordinate of $\next$. Also, let $\curr[\tilde{\vecfield}]^{\mathrm{N}}$ be the (Euclidean) vector of $\curr[\tilde{\vecfield}]$ expanded in the chosen orthonormal basis, and define $\orcl^{\mathrm{N}}(\curr;\curr[\seed])$ and $\err^{\mathrm{N}}(\curr;\curr[\seed])$ similarly. By definition, $\hat{\state}_{\run+1}$ is nothing but $\curr[\step] \curr[\tilde{\vecfield}]^{\mathrm{N}} $.

Since $\curr = \curve(0)$ and $\next = \curve(\curr[\step])$, by properties of a retraction map we must have
\begin{align}
\nn
\curr[\step] \curr[\tilde{\vecfield}]^{\mathrm{N}} &= \hat{\curve}(\curr[\step]) \\
\nn
&=\hat{\curve}(0) +  \curr[\step]\dot{\hat{\curve}}(0)  + \bigoh\parens*{ \curr[\step]^2  \norm{\dot{\hat{\curve}}(0)}_2^2 } \\
\nn
&= \curr[\step] \orcl^{\mathrm{N}}(\curr;\curr[\seed]) +  \bigoh\parens*{ \curr[\step]^2  \norm{ \orcl(\curr;\curr[\seed]) }_{\ssstyle\curr}^2 } \\
&\eqdef \curr[\step] \orcl^{\mathrm{N}}(\curr;\curr[\seed]) + \curr[\step] \tilde{\bias}_{\run}
\end{align}where $\tilde{\bias}_{\run} = \bigoh\parens*{ \curr[\step]  \norm{ \orcl(\curr;\curr[\seed]) }_{\ssstyle\curr}^2 }$.
Therefore, since $\exof{\norm{\err(\point;\seed)}^{4}_\point}< \infty$ for all $\point \in \points$, we have
\begin{equation}
\exof*{  \norm*{ \curr[\error] }_{\ssstyle\curr}^2 }
	=  \exof*{   \norm*{ \err^{\mathrm{N}}(\curr;\curr[\seed]) + \tilde{\bias}_{\run}   }_2^2 }
	< \infty.
\end{equation}
On the other hand,
\begin{equation}
\norm*{\curr[\bias]}_{\ssstyle\curr}
	= \norm*{\exof*{\curr[\error] \given \curr[\filter] }}_{\ssstyle\curr}
	=  \norm*{\exof*{  \tilde{\bias}_{\run}   \given \curr[\filter] }}_2
	= \bigoh\parens*{ \curr[\step]  \norm{ \orcl(\curr;\curr[\seed]) }_{\ssstyle\curr}^2 }.
\end{equation}
By Chebyshev's inequality and the fact that $\exof{\norm{\err(\point;\seed)}^{4}_\point} \leq \kappa^2 < \infty$, we have
\begin{equation}
\nn
\prob\left( {\norm{\err(\curr;\curr[\seed])}^2_{\ssstyle \curr} \geq \noiselevel}\right)
	\leq \frac{\kappa^2}{\run \log^{1+\frac{\epsilon}{2}} \run}
\end{equation}
where $\eps$ is the same as in our choice of step-size in \cref{prop:algorithms}. Using an calculation identical to \cref{lem:vanishing-noise}, we conclude that 
\begin{equation}
\nn
\curr[\step] \norm{\err(\curr;\curr[\seed])}^2_{\ssstyle \curr}
	= \bigoh\parens*{\frac{\noiselevel}{ \sqrt{\run \log^{1+\eps} \run} }}
	= \bigoh\parens*{\frac{1}{    \log^{\frac{\eps}{4}}\run  } }
\end{equation}
which concludes the proof of \eqref{eq:rtr-error-bound}. 

For the retraction-based variants of \crefrange{alg:RPPM}{alg:ROG}, by the above analysis, we may replace $\retrbase_{\point}(\curr[\step] \orcl(\cdot;\curr[\seed]))$ with $\exp_{\ssstyle\point}\parens*{   \curr[\step] \parens*{ \orcl(\cdot;\curr[\seed]) + \curr[\tilde{\bias}] }}$ where $\curr[\tilde{\bias}] \to 0$ almost surely. The rest is the same as in the proof of \cref{prop:algorithms}.\hfill $\blacksquare$

\subsection{Proof of \cref{prop:alternation}}
\label{app:alternation}
Similar to \cref{prop:algorithms}, \cref{lem:vanishing-noise} guarantees that all geodesics are minimizing and invertible. Hence, by the $\lips$-Lipschitzness of $\vecfield$ and \eqref{eq:RARM}, we have, with probability 1,
\begin{align}
\nn
\norm*{\vecfield_{\maxvar}(\next[\minstate],\curr[\maxstate])-\vecfield_{\maxvar}(\curr[\minstate],\curr[\maxstate]) }_{ \curr[\maxvar] }
&\leq \lips\dist\parens*{   \begin{bmatrix}
\next[\minstate] \\ 
\curr[\maxstate]
\end{bmatrix},   \begin{bmatrix}
\curr[\minstate] \\
\curr[\maxstate]
\end{bmatrix}} \\
\nn
&= \lips \curr[\step] \norm*{\vecfield_{\minvar}( {\curr[\minstate]},\curr[\maxstate]) + \noise_{\minvar,\run} + \bias_{\minvar,\run}}_{\curr[\minstate]} \\
\label{eq:hhhh}
&\to 0
\end{align}
by \cref{lem:vanishing-noise} and \cref{prop:algorithms}. Therefore, we may rewrite \eqref{eq:RARM} as
\begin{align}
\next = \exp_{\ssstyle\curr} \parens*{  \curr[\step] \begin{bmatrix}
 \vecfield_{\minvar}(\curr[\minstate],\curr[\maxstate]) + \noise_{\minvar,\run} + \bias_{\minvar,\run}  \\
   \vecfield_{\maxvar}( {\curr[\minstate]},\curr[\maxstate]) + \noise_{\maxvar,\run} + \bias'_{\maxvar,\run}
\end{bmatrix}   }
\end{align}
where $\bias'_{\maxvar,\run} = \bias_{\maxvar,\run} + \vecfield_{\maxvar}(\next[\minstate],\curr[\maxstate])-\vecfield_{\maxvar}(\curr[\minstate],\curr[\maxstate])$.
By \eqref{eq:hhhh} and \cref{prop:algorithms}, $\bias'_{\maxvar,\run}$ satisfies \eqref{eq:errorbounds}, so our proof is complete.
\hfill
\qedsymbol

\bibliographystyle{icml}
\bibliography{IEEEabrv,bibtex/Bibliography-PM,bibtex/Bibliography-YPH,bibtex/Bibliography-MRK}

\end{document}